\newcommand{\GL}{\mathrm{GL}}
\newcommand{\SL}{\mathrm{SL}}
\newcommand{\U}{\mathrm{U}}
\newcommand{\GU}{\mathrm{GU}}
\newcommand{\Lie}{\mathrm{Lie}}
\newcommand{\FJ}{\mathrm{FJ}}
\newcommand{\End}{\mathrm{End}}
\newcommand{\Hom}{\mathrm{Hom}}
\newcommand{\Aut}{\mathrm{Aut}}
\newcommand{\Gal}{\mathrm{Gal}}
\newcommand{\RSZ}{\mathrm{RSZ}}
\newcommand{\Spec}{\mathrm{Spec}}
\newcommand{\Spf}{\mathrm{Spf}}
\newcommand{\Sh}{\mathrm{Sh}}
\newcommand{\Res}{\mathrm{Res}}
\newcommand{\vol}{\mathrm{vol}}
\newcommand{\Hk}{\mathrm{Hk}}
\newcommand{\BBP}{\mathrm{BBP}}
\newcommand{\BB}{\mathrm{BB}}
\newcommand{\codim}{\mathrm{codim}}
\newcommand{\Int}{\mathrm{Int}}
\newcommand{\Orb}{\mathrm{Orb}}
\newcommand{\Split}{\mathrm{Split}}
\newcommand{\Inert}{\mathrm{Inert}}
\newcommand{\rs}{\mathrm{rs}}
\newcommand{\Ch}{\mathrm{Ch}}
\newcommand{\id}{\mathrm{id}}
\newcommand{\wh}{\widehat}
\newcommand{\val}{\mathrm{val}}
\newcommand{\Ker}{\mathrm{Ker}}
\newcommand{\Nm}{\mathrm{Nm}}
\newcommand{\As}{\mathrm{As}}
\newcommand{\Herm}{\mathrm{Herm}}
\newcommand{\can}{\mathrm{can}}
\newcommand{\charpol}{\mathrm{charpol}}
\newcommand{\del}{\operatorname{\partial Orb}}
\newcommand{\BA}{\mathbb A}
\newcommand{\BC}{\mathbb C}
\newcommand{\BR}{\mathbb R}
\newcommand{\BJ}{\mathbb J}
\newcommand{\BZ}{\mathbb Z}
\newcommand{\BQ}{\mathbb Q}
\newcommand{\BG}{\mathbb G}
\newcommand{\BF}{\mathbb F}
\newcommand{\BE}{\mathbb E}
\newcommand{\BL}{\mathbb L}
\newcommand{\BV}{\mathbb V}
\newcommand{\BX}{\mathbb X}
\newcommand{\CH}{\mathrm{CH}}
\newcommand{\CA}{\mathcal A}
\newcommand{\CM}{\mathcal M}
\newcommand{\CN}{\mathcal N}
\newcommand{\CE}{\mathcal E}
\newcommand{\CF}{\mathcal F}
\newcommand{\CC}{\mathcal C}
\newcommand{\CG}{\mathcal G}
\newcommand{\CO}{\mathcal O}
\newcommand{\CP}{\mathcal P}
\newcommand{\CX}{\mathcal X}
\newcommand{\CY}{\mathcal Y}
\newcommand{\CZ}{\mathcal Z}
\newcommand{\CalS}{\mathcal{S}}
\newcommand{\CS}{\mathcal{S}}
\newcommand{\ov}{\overline}
\newcommand{\wt}{\widetilde}
\theoremstyle{definition}        
\newtheorem{definition}{Definition}[subsection] 
\newtheorem{remark}[definition]{Remark}
\theoremstyle{plain}
\newtheorem{theorem}[definition]{Theorem}
\newtheorem{proposition}[definition]{Proposition}
\newtheorem{corollary}[definition]{Corollary}
\newtheorem{conjecture}[definition]{Conjecture}
\begin{document}

\title[Mirabolic cycles and twisted AFL]{Non-reductive special cycles and twisted arithmetic fundamental lemma}
\author{Zhiyu Zhang}
\address{Department of Mathematics, Stanford University,  450 Jane Stanford Way Building 380, Stanford, CA 94305}
\email{zyuzhang@stanford.edu}

\date{}
\setcounter{tocdepth}{1}
\setcounter{secnumdepth}{3}

\maketitle

\begin{abstract}
We consider arithmetic analogs of the relative Langlands program and applications of new non-reductive geometry. Firstly, we introduce mirabolic special cycles, which produce special cycles to many Hodge type Rapoport-Zink spaces via pullbacks e.g. Kudla--Rapoport cycles. Secondly, we formulate arithmetic intersection problems for these cycles and formulate a method of arithmetic induction. As a main example, we formulate arithmetic twisted Gan--Gross--Prasad conjectures on unitary Shimura varieties and prove a key twisted arithmetic fundamental lemma using mirabolic special cycles, arithmetic inductions and Weil type relative trace formulas. 
\end{abstract}
\pagenumbering{roman}
\tableofcontents

\pagenumbering{arabic}

\section{Introduction}

\subsection{Arithmetic relative Langlands program and non-reductive $p$-adic geometry}

Non-reductive groups are used a lot in the theory of modular forms and automorphic forms for reductive groups, see e.g. Fourier coefficients, parabolic inductions, Jacobi groups and Jacobi forms. In the relative Langlands program (see e.g. recent work of Ben-Zvi--Sakellaridis--Venkatesh \cite{BZSV}), we have a lot of interesting examples of period integrals and $L$-functions for a reductive group $G$ acting on a nice space $X$. In pursuit of an arithmetic relative Langlands program relating algebraic cycles and $L$-functions (see e.g. the survey \cite{Zhang2024survey}), we would like to have geometric objects similar to Shimura varieties attached to these $G$-spaces. A test case will be $X=G/H$ for a subgroup $H \leq G$. However, non-reductive $H$ may not even give complex algebraic varieties. A typical example is the embedding (related to Fourier coefficients for modular forms and horocycle flows for dynamic systems)
$$\BG_a=\begin{pmatrix}
1 & * \\ 0 & 1    
\end{pmatrix} \to \SL_2
$$
which on adelic quotients gives real circles $S^1$ on complex modular curves $X_0(N)$.

One motivation of the paper is new non-reductive geometry in the nonarchimedean
world (or how to find $p$-adic analogs of circles). Via mirabolic special cycles and pullbacks, in this paper we construct local geometric cycles for \emph{a nice representation} $X=V$ of $G$.

We consider the following local arithmetic relative Langlands program. Let $p$ be a prime and $F_0/\BQ_p$ be a local field. Let $(H, \mu_H, b_H) \to (G, \mu_G, b_G)$ be an embedding of local Shimura data over $F_0$. For compatible level $K_H \leq H(F_0), K_G \leq G(F_0)$, we expect an embedding of relevant Rapoport--Zink spaces (which exists in the parahoric Hodge type cases \cite{HP17}\cite{kim2018}\cite{HK2019Hodgeparahoric})
\[
\CN_H:=\CN_{H, \mu_H, b_H, K_H} \to \CN_G:=\CN_{G, \mu_G, b_G, K_G}
\]
as formal schemes over $p$-adic integers, compatible with actions of $J_{b_H}(F_0) \to J_{b_G}(F_0)$. If $b_G$ is basic, then $J_{b_G}$ is an inner form of $G$. For $g \in J_{b_G}(F_0)/J_{b_H}(F_0)$, we have the special cycle 
 \[
\CZ_{G/H}(g):=g \CN_H \to \CN_G.
\]

Let $X$ be a $G$-variety with a companion $J_{b_G}$-variety $X_{b_G}$ over $F_0$. In good cases, for regular $x \in X_{b_G}(F_0)$ we expect the existence of \emph{special Rapoport--Zink cycles} (with level $K_G$)
\[
\CZ_X(x) \to \CN_G
\]
such that $g \CZ_X(x)= \CZ_X(gx), \, \forall g \in J_{b_G}(F_0)$. When $X=G/H$ is affine spherical, we recover the cycle $\CZ_{H,G}(g)=g \CN_H$ as above. In the group case $G=H \times H$ and $X=H$, we recover the graph of any automorphism $g \in J_{b_H}$ on $\CN_H$. In this paper, we will also encounter the Galois twisted group case and study twisted graphs. We expect such constructions for a good class of $G$-spaces (see e.g. \cite{BZSV}) and admits a derived enhancement to general $x \in X(F_0)$. 

In this paper, we focus on the case $X=V$ is a nice representation of $G$. Our first main result is a discovery of new algebraic cycles on general linear Rapoport--Zink spaces --- the mirabolic special cycles. Then, we may achieve the above construction via pullbacks from universal mirabolic special cycles attached to $V$. For instance, we recover Kudla--Rapoport type cycles on basic unitary Rapoport--Zink spaces \cite{KR2011local} for $X_{b_G}=\BV$ where $J_{b_G}=\U(\BV)$. A more general theory of Rapoport--Zink cycles will be studied in a future work \cite{ZZZ}. 

Let $F/F_0$ be the unramified quadratic extension. Let $V$ be a $n$-dimensional $F$-vector space. Choosing a framing object $(\BX, \iota_\BX)$, consider the basic general linear Rapoport--Zink space for $\Res_{F/F_0} \GL(V)$:
\[
\CN^\GL_n \to \Spf O_{\breve{F_0}}
\]
as the moduli space of $(X, \iota, \rho)$ over test schemes $S$, where $X$ is a strict formal $O_{F_0}$-module over $S$ of relative $O_{F_0}$-height $2n$ and dimension $n$, $\iota$ is a $O_F$-action on $X$ of Kottwitz signature $(n-1,1)$, $\rho$ is a $O_F$-linear height $0$ quasi-isogeny $\rho: X \times S/p \to \BX \times S/p$. Let $\BV$ be the $n$-dimensional $F$-vector space of special quasi-homomorphisms attached to $(\BX, \iota_\BX)$ as in the work of Kudla-Rapoport \cite{KR2011local}. For non-zero $u \in \BV$, the work of Kudla--Rapoport \cite{KR2011local} introduces Kudla--Rapoport divisors $\CZ(u)$ on the unitary Rapoport zink space $\CN_n$, which can be thought as an arithmetic geometrization of spherical Weil representations.

\begin{theorem}[Theorem \ref{prop: geometry of mirabolic cycle}]
For non-zero $u \in \BV$, we have mirabolic special cycles 
$$\CZ^\GL(u) \to \CN^\GL_n
$$ which are relative Cartier divisors. For a natural embedding of the unitary Rapoport--Zink space $\CN_n \to \CN_n^\GL$, we recover the Kudla--Rapoport divisors  via the pullback diagram:
     \[
\xymatrix{
	\CZ(u)  \ar[r] \ar[d] &  \CZ^\GL(u)  \ar[d]  \\  
	\CN_n  \ar[r] & \CN^\GL_n }
.\]
\end{theorem}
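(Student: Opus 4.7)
The plan is to mimic the Kudla--Rapoport construction at the level of $\CN_n^\GL$, verify the Cartier divisor property by Grothendieck--Messing, and deduce the pullback compatibility from the universal property.

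First, I would fix the framing object $(\BE, \iota_\BE)$ as in Kudla--Rapoport --- a strict formal $O_{F_0}$-module of $O_{F_0}$-height $2$ with an $O_F$-action of appropriate Kottwitz signature --- and let $\CE$ denote its canonical lift over $\Spf O_{\breve F_0}$. For any test $\CN_n^\GL$-scheme $S$ and $(X, \iota, \rho) \in \CN_n^\GL(S)$, the composition $\rho^{-1} \circ u_S: \CE_S \to X$ is an $O_F$-linear quasi-homomorphism, and I would define $\CZ^\GL(u)(S) \subset \CN_n^\GL(S)$ to consist of those $(X, \iota, \rho)$ for which this composition is a genuine $O_F$-linear homomorphism. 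Representability by a closed formal subscheme of $\CN_n^\GL$ follows from the rigidity of quasi-isogenies of $p$-divisible groups, exactly as in the unitary setting.

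Next, I would prove the relative Cartier divisor property by Grothendieck--Messing deformation theory. Since $\CN_n^\GL$ is of EL type with unramified minuscule cocharacter of signature $(n-1,1)$, it is formally smooth over $\Spf O_{\breve F_0}$ of relative dimension $n-1$. For a square-zero thickening $S \hookrightarrow S'$ with ideal $J$, a lift of $(X,\iota)/S$ to $S'$ corresponds to a lift of the Hodge filtration inside $\BD(X)(S')$ compatible with the signature decomposition of the $O_F$-action, and a given homomorphism $u_S:\CE_S \to X$ extends to $S'$ iff the induced map of crystals $\BD(\CE)(S') \to \BD(X)(S')$ preserves lifted filtrations. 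A linear algebra computation in the crystal, using that $\BD(\CE)$ has rank $2$ over $O_{F_0}$ with one-dimensional Hodge piece concentrated in the signature-$1$ eigenspace, shows that this compatibility amounts to a single $J$-valued equation. Hence $\CZ^\GL(u)$ is locally principal, and to upgrade this to an honest relative Cartier divisor I would show the defining equation is a non-zero-divisor fibrewise by exhibiting, on each irreducible component of the reduced locus of $\CN_n^\GL$, a closed $\ov{k}$-point at which $u$ does not lift.

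The pullback compatibility is then essentially tautological. The embedding $\CN_n \hookrightarrow \CN_n^\GL$ is defined by requiring that the principal polarization $\lambda_\BX$ lift together with $(X, \iota, \rho)$, so the universal $p$-divisible group on $\CN_n$ is the restriction of that on $\CN_n^\GL$. The Kudla--Rapoport divisor $\CZ(u)$ is cut out by the very same lifting condition on $\rho^{-1}\circ u$; the polarization compatibility demanded of the lift is automatic because $u \in \BV$ is by definition a \emph{special} quasi-homomorphism, i.e.\ compatible with $\lambda_\BE$ and $\lambda_\BX$. This yields $\CZ(u) \cong \CZ^\GL(u) \times_{\CN_n^\GL} \CN_n$ as formal schemes.

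The main obstacle is the non-zero-divisor step in the Cartier divisor statement --- ruling out that $\CZ^\GL(u)$ absorbs an entire irreducible component of $\CN_n^\GL$. In the unitary setting this relies on the Vollaard--Wedhorn description of $\CN_n^{\red}$ as a union of classical Deligne--Lusztig varieties. For $\CN_n^\GL$, the reduced locus is an analogous affine Deligne--Lusztig variety for $\Res_{F/F_0}\GL_n$, and I would exploit its explicit stratification to produce, on each irreducible component, a closed point whose covariant Dieudonn\'e module visibly contains no element in the image of $u$. An alternative, more economical route is to leverage the already-established Kudla--Rapoport result on $\CN_n$ together with the closed embedding $\CN_n \hookrightarrow \CN_n^\GL$ to reduce the non-vanishing on $\CN_n^\GL$ to the unitary case.
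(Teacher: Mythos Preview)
Your approach is essentially the paper's, with one small correction and one remark on emphasis. The correction: $\CN_n^\GL$ has relative dimension $2(n-1)$, not $n-1$ (there is no polarization constraint, so the Hodge filtration deforms in both $O_F$-eigenspaces); this does not affect the obstruction computation, which still lands in the rank-one piece $(\Lie X)_0$ determined by the Kottwitz signature, exactly as you and the paper argue.

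On the non-zero-divisor step: the paper takes precisely your ``alternative, more economical route.'' It first observes the pullback identity $\CZ^\GL(u)|_{\CN_n} = \CZ(u)$ (which, as you say, is tautological from the moduli descriptions), and then invokes the Kudla--Rapoport result that $\CZ(u)$ is a relative Cartier divisor on $\CN_n$ to conclude that the local defining equation of $\CZ^\GL(u)$ is non-zero and not divisible by the uniformizer. Your primary proposal via an explicit Deligne--Lusztig description of $(\CN_n^\GL)^{\red}$ would also work but is unnecessary overhead; the reduction to the unitary case is the cleaner argument, and the paper commits to it directly rather than presenting it as a fallback.
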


See Theorem \ref{prop: geometry of mirabolic cycle} for more geometric properties, which is similar to the work of Kudla--Rapoport \cite[Theorem 1.1]{KR2011local}. Let $\GL(u) \leq \GL(\BV)$ be the mirabolic subgroup defined as the stabilizer of $u$, then the generic fiber of $\CZ^\GL(u)$ is an analog of local Shimura varieties for the (non-reductive) mirabolic group $\GL(u)$. Given any Rapoport--Zink space $\CN_G$ and a $n$-dimensional representation $V$ of $G$ inducing an embedding $\CN_G \to \CN_n^\GL$, we obtain \emph{special linear cycles} $Z_V(u) \to \CN_G$ ($u \not=0$) as (derived) pull backs of $Z^\GL(u)$, see Remark \ref{defn: pullback mirabolic speical cycles}. These cycles  are arithmetic analogs of fibers for the variety of triples in the theory of Coulomb branches \cite{BFN}, which may be explored further. From moduli definitions of mirabolic special cycles, we have 
\[
\CZ_{V_1 \oplus V_2}(u_1,u_2)=\CZ_{V_1}(u_1) \cap_{\CN_G} \CZ_{V_2}(u_2), \, \, \forall  u_i \not =0.
\]
In this way, Kudla--Rapoport conjectures \cite{KR2011local}\cite{LiZhangKR2019} can be viewed as studying arithmetic degrees of (a derived version of) special linear cycles attached to the representation $V^{\oplus \dim V}$ of $\U(V)$. Therefore, we expect these special linear cycles will be helpful to formulate and study more arithmetic intersection problems, which will be studied in a future work. For instance, see \cite{WZhangBesselAFL2022} and the role of Kudla-Rapoport cycles in arithmetic fundamental lemmas \cite{AFL-Wei2019}\cite{AFL-JEMS}\cite{ZZhang2021} and Kudla--Rapoport conjectures \cite{LiZhangKR2019}. In this paper, they also play a key role in the proof of a twisted arithmetic fundamental lemma whose statement does not involve such cycles!

\subsection{Arithmetic intersections and arithmetic induction systems}

Now, we consider arithmetic intersection problems from the above program. Assume that $\CN_G$ is regular. Let $X_1, ..., X_m$ be varieties with $G$-actions such that we have constructed above special cycles $\CZ_{X_i}(x_i) \to \CN_G$ on an open $J_{b_G}$-stable subset $x_i \in U_i \subseteq X_{i,b_G}$. Assume that there exists a dense open $J_{b_G}$-stable subset $U_{X_1, \hdots, X_m} \subseteq \prod_{i=1}^m U_{X_i}$, such that for any $(x_i) \in U_{X_1, \hdots, X_m}$, the intersection $\cap_{i=1}^m \CZ_{X_i}(x_i)$ is a proper scheme, and $\sum_{i=1}^{m} \codim \CZ_{X_i}(x_i)= \dim \CN_G$. Then we consider the $J_{b_G}$-invariant arithmetic invariant functional
\[
\Int_{X_1, \hdots, X_m}^{G}: U_{X_1, \hdots, X_m} \to \BQ,
\]
\[
\Int_{X_1, \hdots, X_m}^{G}(x_1, \hdots, x_m) := \CZ_{X_1}(x_1) \cap^{\BL} \hdots \cap^{\BL} \CZ_{X_m}(x_m) := \chi(\CN_G, \CO_{\CZ_{X_1}(x_1)} \otimes^{\BL} \hdots \otimes^{\BL} \CO_{\CZ_{X_m}(x_m)} ).
\]
Moreover, we expect \emph{arithmetic fundamental lemmas} (AFL) and \emph{arithmetic transfers} (AT) relating $\Int_{X_1, \hdots, X_m}^{G}$ with analytic integrals of certain test functions on these orbits. In good cases these special cycles may be globalized and used to formulate Gross--Zagier type formulas \cite{GrossZagier} for Shimura varieties, with applications to Beilinson--Bloch--Kato conjectures on $L$-functions and motives.  The unitary arithmetic Gan--Gross--Prasad conjectures \cite{GGP-AGGP}\cite{AFL-Invent} and arithmetic Siegel--Weil formulas in the Kudla program \cite{Kudla-Annals97}\cite{KRY-book}\cite{LiZhangKR2019}\cite{Li-Liu}\cite{LiZhangGSpin} are well-studied examples. See also the work of \cite{Madapusi-derived} on derived special cycles and some extensions of the Kudla program.

\subsubsection{Arithmetic induction systems}
To understand the above arithmetic intersection questions, we introduce the notion of arithmetic induction systems for good tuples $(G, X=X_1, Y=X_2)$ (Definition \ref{defn: arithmetic induction system}), which allows an inductive understanding of arithmetic / analytic functionals. The idea is already crucially used in the recent proof of Kudla--Rapoport conjectures \cite{LiZhangKR2019} and arithmetic fundamental lemmas \cite{AFL-Wei2019}\cite{AFL-JEMS}\cite{ZZhang2021}.

In this paper, guided by the above program, we mainly study arithmetic analogs of the twisted Gan--Gross--Prasad conjectures \cite{twistedGGP}\cite{Wang-TGGP} on twisted Asai L-functions. As a key local conjecture, we study and prove twisted arithmetic fundamental lemmas (Conjecture \ref{Twisted AFL conjecture}) via local arithmetic inductions and global methods.

\begin{theorem}[Proposition \ref{geometric side: arithmetic induction}, Proposition \ref{analytic side: arithmetic induction}]
Both sides of the twisted arithmetic fundamental lemma Conjecture \ref{Twisted AFL conjecture} are arithmetic induction systems.
\end{theorem}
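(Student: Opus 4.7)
The plan is to verify the axioms of an arithmetic induction system (Definition \ref{defn: arithmetic induction system}) separately on the geometric and analytic sides of Conjecture \ref{Twisted AFL conjecture}. The guiding principle is that the representation data controlling both sides admits a natural filtration by rank, and both $\Int$-functionals and orbital integrals should propagate through this filtration in a way that is compatible with the matching of orbits underlying the twisted AFL. My first step would be to record explicitly the filtration $V = V' \oplus \langle u_0 \rangle$ that splits off one distinguished vector, so that the induction step reduces a rank $n$ problem to a rank $n-1$ problem together with a ``boundary'' contribution coming from the mirabolic cycle of $u_0$.

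For the geometric side (Proposition \ref{geometric side: arithmetic induction}), I would leverage the multiplicativity formula
\[
\CZ_{V_1 \oplus V_2}(u_1,u_2) = \CZ_{V_1}(u_1) \cap_{\CN_G} \CZ_{V_2}(u_2)
\]
recorded in the introduction, together with the fact that $\CZ^{\GL}(u)$ is a relative Cartier divisor (Theorem \ref{prop: geometry of mirabolic cycle}). Separating off one summand in the Kudla--Rapoport tuple reinterprets the derived intersection $\cap^{\BL}_i \CZ(u_i)$ as the cup product of a single mirabolic divisor with a residual intersection that, after identification, lives on a smaller twisted Rapoport--Zink space. The regular-Cartier property ensures the derived cup product collapses to a classical intersection in the induction step, and compatibility with the twisted graph cycle from the Galois twisted group case $G=H\times H$ has to be recorded cycle-by-cycle.

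For the analytic side (Proposition \ref{analytic side: arithmetic induction}), I would run a partial Fourier/parabolic descent argument on the orbital integrals produced by the Weil type relative trace formulas. Concretely, decomposing the test function along the same $V = V' \oplus \langle u_0 \rangle$ splitting and applying an Iwasawa-type integration formula rewrites the full orbital integral as an orbital integral on the smaller twisted group against a ``mirabolic Weil'' kernel function. Matching transfer factors and volume normalizations under this descent, and identifying the boundary term with a twisted weighted orbital integral, would complete the verification of the induction axioms.

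The main obstacle, I expect, is not either side in isolation but the compatibility between them: the Galois twist in the group case distorts the regular-semisimple orbit parametrization, and one must ensure that the descent from rank $n$ to rank $n-1$ preserves regular semisimplicity on both sides and sends matching orbits to matching orbits with the correct transfer factor. A secondary technical subtlety is that the mirabolic stabilizer $\GL(u)$ is non-reductive, so the usual harmonic-analytic input for descent of orbital integrals must be replaced by its mirabolic analog; here the Cartier divisor structure of $\CZ^{\GL}(u)$ on the geometric side and explicit Jacobi-type integration on the analytic side should provide the needed substitutes.
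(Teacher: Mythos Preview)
Your proposal misidentifies where the induction takes place on the geometric side. The intersection number $\Int^{\Herm, \BV}(g,u) = (g\CN_n \cap^\BL_{\CN_n^\GL} \CN_n) \cap^\BL_{\CN_n} \CZ(u)$ involves a \emph{single} Kudla--Rapoport divisor $\CZ(u)$ together with the twisted fixed cycle; there is no tuple $\cap_i^\BL \CZ(u_i)$ to split, so the multiplicativity formula you cite plays no role. The induction does not peel one vector off a collection of KR vectors (as in the Kudla--Rapoport conjecture). Rather, when $(u^0,u^0)$ is a unit so that $\CZ(u^0) \cong \CN_{n-1}$, the induction acts on the \emph{hermitian variable} $g$: the paper introduces an explicit relative Cayley map $c_{\BV,\Herm}$ sending $g = \begin{pmatrix} a & b \\ c & d \end{pmatrix}$ to $(g^\flat, u^\flat) = (a/(1-d),\, b/(1-d)) \in \Herm(\BV^\flat) \times \BV^\flat$, and the content of Proposition~\ref{geometric side: arithmetic induction} is the identity $\Int^{\Herm, \BV}(g, u^0) = \Int^{\Herm, \BV^\flat}(g^\flat, u^\flat)$.

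The geometric heart you are missing is Proposition~\ref{pullback of graphs}: the restriction of the translated cycle $g\CN_n \subset \CN_n^\GL$ to $\CN_{n-1}^\GL$ equals $g^\flat \CN_{n-1} \cap \CZ^\GL(u^\flat)$, obtained by reading the polarization $\lambda_g$ in block form on $\BX = \BX^\flat \times \BE$. This is where the mirabolic divisor genuinely enters --- not from the original KR side but as the off-diagonal block of $g$. On the analytic side your parabolic-descent instinct is closer, but the paper again uses a matching Cayley map $c_{V_0,\GL}: \gamma \mapsto (\gamma^\flat, u_1^\flat, u_2^\flat)$ together with an integral transitivity lemma \cite[Lemma~4.12]{ZZhang2021}; since both Cayley maps are restrictions of a single $\GL(V)$-equivariant map on $\End(V)$, the compatibility with matching of orbits and transfer factors is immediate rather than the principal obstacle you anticipate.
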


The proof uses above mirabolic special cycles. In the sequel \cite{ZZZ}, we will study more examples of arithmetic intersection questions and prove they are arithmetic induction systems.

\subsection{Twisted arithmetic fundamental lemma and arithmetic Gan-Gross-Prasad conjectures}

Let $F/F_0$ be an unramified quadratic extension. We consider
\begin{itemize}
    \item (arithmetic unitary side) Let $V=(V, h_0)$ be a split $n$-dimensional hermitian space over $F$. The group $G:=\GL(V)$ acts naturally on the set of (non-degenerate) hermitian structures $h \in X=\Herm(V)$ on $V$ with two orbits -- split orbits and non-split orbits (determined by isomorphism classes of $(V, h)$). Let $H_{V, h}:=\U(V, h)$, which acts naturally on $X_{V,h}:=\GL(V)/\U(V, h)$ (resp. $Y_{V, h}=V$). For a non-split hermitian space $\BV=(V, h_1)$ and $H_\BV=\U(\BV)$, we consider unramified unitary Rapoport--Zink spaces $\CN_n$ for $(V, h_0)$ with a natural embedding $\CN_n \to \CN_n^\GL$. On $\CN_n$, we have \emph{twisted fixed cycles} $\CN_n^{\Herm}(g)=g \CN_n \cap^{\BL}_{\CN^\GL_n} \CN_n, \forall g \in \GL(\BV)/\U(\BV)$ (resp. \emph{Kudla--Rapoport divisors} $\CZ(u), \forall 0 \not = u \in \BV$ ).
    \item (analytic general linear side) Let $V_0$ be a $n$-dimensional vector space over $F_0$ and $V_0^*$ be its linear dual. Let $H'=\GL(V_0)$.  Let $X'=\GL(V_0)$ with the conjugacy action of $H'$, and $Y'=V_0':=V_0 \times V_0^*$ with the natural action of $H'$.
\end{itemize}

We have a matching of regular semi-simple orbits 
\[
[H' \backslash (X' \times Y')]_\rs \cong [H_V \backslash (X_V \times Y_V)]_\rs \coprod [H_{\BV} \backslash (X_\BV \times Y_\BV)]_\rs, \, \, (\gamma,u') \leftrightarrow (g, u).
\]
with matching characteristic polynomials $\charpol(\gamma)=\charpol(g)$ and inner products $u_2\gamma^i u_1= (g^i u, u), i \in \BZ$. Choose a self-dual lattice $L$ in $V$. Choose an orthogonal basis $\{e_i\}_{i=1}^n$ of $L$, and let $L_0= \sum_{i=1}^n O_{F_0}e_i$. We may assume that $V_0=L_0 \otimes \BQ$. We consider the lattice $L'=L_0 \times L_0^* \subseteq Y'=V_0'$. Define standard test functions (\ref{standard test function})
\[
\Phi_{L'}'=1_{\GL(L_0)} \times 1_{L'} \in C_c^\infty(X' \times Y'), \, \Phi_L= 1_{\Herm(L)} \times 1_{L} \in C_c^\infty(X_V \times Y_V).
\]

\begin{theorem}[Twisted Fundamental Lemma, Theorem \ref{Twisted FL}] 
For any regular semi-simple orbits $(\gamma, u') \in (\GL_n(V_0) \times V_0')(F_0)_\rs$, we have 
\[
\Orb((\gamma,u'), \Phi'_{L'})= \begin{cases}
   \Orb((g,u), \Phi_L), & \text{if $(\gamma,u')$ matches $(g,u) \in (\GL(V)/\U(V) \times V)(F_0)_\rs$.}\\
   0, & \text{if $(\gamma,u')$ matches $(g,u) \in (\GL(\BV)/\U(\BV) \times \BV)(F_0)_\rs$.}  \\
\end{cases}
\]
\end{theorem}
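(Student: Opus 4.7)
The plan is to reduce the identity to a combinatorial matching of lattices, handling the split and non-split cases separately.

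First, I would unfold both orbital integrals as weighted counts over sets of lattices. On the analytic side, $\Orb((\gamma, u'), \Phi'_{L'})$ becomes the count of $O_{F_0}$-lattices $\Lambda \subset V_0$ in the $\GL(V_0)$-orbit of $L_0$ that are $\gamma$-stable, contain $u_1$, and satisfy $u_2(\Lambda) \subset O_{F_0}$ (i.e.\ $u_2 \in \Lambda^\vee$), weighted by the appropriate Haar measure of the centralizer. On the geometric side, $\Orb((g, u), \Phi_L)$ becomes a count of self-dual hermitian $O_F$-lattices $M \subset V$ compatible with the coset condition $gM \in \Herm(L)$ and containing $u$. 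The matched invariants $\charpol(\gamma) = \charpol(g)$ and $u_2 \gamma^i u_1 = (g^i u, u)$ for $i \in \BZ$ furnish the bridge between the two pictures.

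In the split matching case, I would construct a bijection via the base-change functor $\Lambda \mapsto M := \Lambda \otimes_{O_{F_0}} O_F$. The $F_0[\gamma]$-module structure on $V_0$ base-changes to an $F[g]$-module structure on $V$, and the matched moment identities guarantee that the integrality conditions on $u_1, u_2$ with respect to $\Lambda, \Lambda^\vee$ translate exactly to integrality of $u$ with respect to $M$. Self-duality of $M$ as a hermitian $O_F$-lattice in the split hermitian space $V_0 \otimes_{F_0} F \simeq V$ follows from self-duality of $L_0$ together with the compatibility of the standard hermitian form with the base-changed pairing. The inverse is provided by Galois descent along $\Gal(F/F_0)$, which is applicable precisely because $(V, h_0)$ is split and admits a lattice of the form $L_0 \otimes_{O_{F_0}} O_F$.

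For the non-split matched case, the vanishing follows from a Hasse-invariant obstruction: any $O_{F_0}$-stable lattice $\Lambda \subset V_0$ base-changes to a self-dual hermitian lattice in the split hermitian space $V_0 \otimes_{F_0} F$, so it cannot realize the non-trivial invariant attached to the non-split hermitian space $\BV$. The lattice set indexing the orbital integral is therefore empty, and the integral vanishes identically.

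The main obstacle is normalizing the weighted counts so that the bijection in the split case is measure-preserving: the centralizer of $\gamma$ in $\GL(V_0)$ and that of $g$ in $\U(V)$ must match compatibly under base change, and when $u$ is non-zero the vector data further constrains the stabilizer and breaks a naive symmetry. A cleaner alternative would be to deploy a local Weil-type relative trace formula in the spirit of the global methods used elsewhere in the paper, reducing the identity to a spectral matching that bypasses explicit centralizer accounting; this route aligns well with the arithmetic induction systems established for both sides and may allow one to bootstrap from low-rank base cases via the inductive structure.
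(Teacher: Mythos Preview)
Your proposal has two concrete gaps that prevent it from going through.

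First, in the split case your proposed bijection $\Lambda \mapsto \Lambda \otimes_{O_{F_0}} O_F$ is not surjective onto the lattices indexing the unitary orbital integral. Those lattices are of the form $M = h \cdot L$ for $h \in \U(V)(F_0)$, and such $M$ are generally \emph{not} $\Gal(F/F_0)$-stable inside $V = V_0 \otimes_{F_0} F$ (concretely, $\sigma(h) \neq h$ for generic $h$). Galois descent therefore does not furnish an inverse on the full target, and the claimed bijection fails.

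Second, and more fundamentally, your vanishing argument in the non-split case is based on the wrong mechanism. You claim the lattice set on the $\GL$-side is empty by a Hasse-invariant obstruction, but that set $\{\Lambda : \gamma\Lambda = \Lambda,\ u_1 \in \Lambda,\ u_2 \in \Lambda^\vee\}$ is typically non-empty regardless of whether $(\gamma, u')$ matches the split or non-split orbit (already visible for $n=1$). The orbital integral $\Orb((\gamma,u'),\Phi'_{L'})$ carries the quadratic character $\eta(\det h)$; it is a \emph{signed} lattice count, and the vanishing in the non-split case comes from cancellation induced by $\eta$, not from emptiness. Your write-up never mentions this sign, and the Hasse-invariant story does not substitute for it.

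The paper's proof proceeds differently and avoids both issues: it uses the Cayley map to pass to the Lie algebra, where the statement becomes literally equivalent to the Jacquet--Rallis fundamental lemma, and then cites Beuzart-Plessis. That result is deep precisely because no elementary lattice bijection handles the $\eta$-cancellation; your direct approach would amount to a new proof of the Jacquet--Rallis FL, which the sketch does not supply. Your closing remark about arithmetic induction is orthogonal here: in this paper that induction is used for the \emph{arithmetic} fundamental lemma (derivatives and intersection numbers), not for the fundamental lemma itself.
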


For $(g,u) \in (\GL(\BV)/\U(\BV) \times \BV)(F_0)_\rs$, consider the arithmetic intersection number (\ref{defn: intersection number})
\[
\Int^{\Herm, \BV}(g,u):=(g \CN_n \cap^{\BL}_{\CN^\GL_n} \CN_n)  \cap^\BL_{\CN_n} \CZ(u) \in \BQ.
\]

\begin{theorem}[Twisted Arithmetic Fundamental Lemma, Conjecture \ref{Twisted AFL conjecture}, Theorem \ref{Twisted AFL theorem}]
For any $(g,u) \in (\GL(\BV)/\U(\BV) \times \BV)(F_0)_\rs$ matching $(\gamma,u') \in (\GL_n \times V_0')(F_0)_\rs$, we have
\[			
 \partial \Orb((\gamma,u'),  \Phi'_{L'}) =- \Int^{\Herm, \BV}(g,u) \log q.
\]
\end{theorem}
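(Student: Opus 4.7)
The plan is to combine the two arithmetic induction systems established in Propositions \ref{geometric side: arithmetic induction} and \ref{analytic side: arithmetic induction} with a global argument via a Weil-type arithmetic relative trace formula for the twisted Gan--Gross--Prasad setup. Since both sides of the conjectured identity define arithmetic induction systems, so does their difference; by the defining properties of such a system, the verification on all regular semi-simple orbits $(\gamma,u')$ reduces to its verification on a sparse set of base seeds together with compatibility of the inductive step.

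To execute the induction, I would make the recursion explicit using the mirabolic formalism. By Theorem \ref{prop: geometry of mirabolic cycle} the Kudla--Rapoport divisor $\CZ(u)$ is the restriction of the mirabolic cycle $\CZ^\GL(u)$ from $\CN_n^\GL$; varying $u$ along a fixed lattice profile then reduces the intersection $(g\CN_n \cap^\BL_{\CN_n^\GL} \CN_n) \cap^\BL_{\CN_n} \CZ(u)$ to an intersection on the lower-rank unitary Rapoport--Zink space attached to $u^\perp$, the twisted fixed cycle decomposing compatibly with $\BV = Fu \oplus u^\perp$. On the analytic side, partial Fourier transform in the $u'$-variable of $\Phi'_{L'}$ produces a matching recursion for derivatives of orbital integrals, and one checks that the two recursions agree up to explicit local boundary contributions that cancel. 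For the base seeds I would globalize $(g,u)$ as the local component at a distinguished place $v_0$ of a global datum over a CM pair $F/F^+$ with $F^+$ totally real, build the corresponding RSZ-type unitary Shimura variety equipped with the twisted fixed cycle arising from Galois conjugation and the global Kudla--Rapoport cycle, and compare the resulting arithmetic RTF with the analytic twisted Jacquet--Rallis RTF on the $(X', Y')$-side. The local identities at the remaining unramified inert places are supplied by Theorem \ref{Twisted FL}, while split and archimedean places are handled by standard smooth transfer.

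The hard part will be the semi-global isolation at $v_0$: one has to cleanly separate the contribution of the target orbit from those of other regular orbits and of the non-regular locus in both the arithmetic intersection pairing and the analytic derivative. This requires controlling the modularity and analyticity of the arithmetic generating series attached to the twisted fixed cycle and to the Kudla--Rapoport divisors, together with an existence-of-test-functions result that kills the unwanted terms after integrating against auxiliary automorphic input, in the spirit of Wei Zhang's approach to the original AFL. Secondary obstacles include verifying that all singular contributions to the global intersection remain under control along the Galois twist (so that the twisted graph behaves like a genuine correspondence after base change), and ensuring that the germ expansion of $\Int^{\Herm,\BV}$ along the Kudla--Rapoport stratification matches the Fourier-theoretic recursion on the analytic side. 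Once these ingredients are secured, the identity propagates from the seeds to all regular semi-simple orbits by the arithmetic induction system, yielding the equality $\partial \Orb((\gamma,u'), \Phi'_{L'}) = -\Int^{\Herm, \BV}(g,u)\log q$.
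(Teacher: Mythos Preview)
Your broad ingredients are right---arithmetic induction on both sides, globalization to an RSZ Shimura variety, modularity input, and test-function isolation---but the logical structure is inverted relative to the paper's argument, and this matters.

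You treat the arithmetic induction system as the primary reduction: the difference is an induction system, so it ``reduces to a sparse set of base seeds,'' which you then verify globally. But Definition~\ref{defn: arithmetic induction system} only gives a recursion when $q(y_0)\in O_{F_0}^\times$; it says nothing about orbits with $\val(\xi)>0$, and these are precisely the cases of interest. There is no direct reduction of a general $(g,u)$ to seeds.

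The paper runs the logic the other way. One first globalizes the given local orbit $(g_0,u_0)$ (with no unit-norm hypothesis) and forms the $\SL_2(\BA_{F_0})$-generating series $\partial\BJ(h,\Phi')$ and $\Int(h,\Phi)$. The arithmetic induction (Propositions~\ref{geometric side: arithmetic induction} and~\ref{analytic side: arithmetic induction}) is applied \emph{inside} this global picture: it shows the $\xi$-th Fourier coefficient of the difference vanishes whenever $\xi$ is coprime to $\Delta$, by descending to the twisted AFL in one lower dimension. The passage to the remaining $\xi$---in particular to $\xi_0=(u_0,u_0)$---comes from the modularity of the modified difference $\textbf{Diff}^\circ(h)$ as a holomorphic form on $\SL_2$: a holomorphic automorphic form whose $\xi$-th coefficient vanishes for all $\xi$ prime to $\Delta$ must vanish in $\BR_\Delta$. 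The anchor that makes $\textbf{Diff}^\circ$ (rather than $\textbf{Diff}$) modular is not a seed from the induction but the maximal-order case (Proposition~\ref{TAFL: maximal order}), invoked via an auxiliary characteristic polynomial $\alpha_m$.

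Two smaller points. The analytic recursion is not a partial Fourier transform in $u'$; it is the relative Cayley map $c_{V_0,\GL}$ applied to $\gamma$ with $u'$ fixed at $(e,e^*)$. And no germ expansion along a Kudla--Rapoport stratification is needed: basic uniformization at a good inert place already expresses the semi-global contribution as a finite sum of local intersection numbers times away-from-$v_0$ orbital integrals, so the comparison with $\partial\BJ_{v_0}$ is termwise once transfers are in place.
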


Compared to the semi-Lie AFL \cite{AFL-Wei2019} which uses the diagonal $\U(V) \to \U(V) \times \U(V)$, our twisted AFL uses the twisted diagonal $\U(V) \to \GL(V)$ which is quite different.

Such twisted AFL is a crucial ingredient for a relative trace formula approach towards arithmetic analogs of twisted Gan--Gross--Prasad conjectures \cite{twistedGGP} using twisted Fourier-Jacobi cycles following \cite{Liu-FJcycles}, for which we give a formulation at the end of this paper.

\begin{conjecture}[Arithmetic Twisted Gan-Gross-Prasad conjecture, Conjecture \ref{conj: ATGGP}]
Let $F/F_0$ (resp. $E/F_0$) be a CM (resp. totally real) quadratic extension of number fields. Let $\mu$ be a conjugate symplectic automorphic character of $\BA_F^\times/F^\times$ of weight one. Let $\pi$ be a cuspidal automorphic representation of $\U(V_{E_0})(\BA_{E_0})$. Let $\Pi$ be the automorphic representation of $\GL_n(\BA_{E})$ as the base change of $\pi$. Assume that $\Pi_\infty$ is relevant \cite[Definition 1.2]{Liu-FJcycles}. Then the following statements are equivalent.
\begin{enumerate}
    \item The twisted arithmetic functional $\FJ_\epsilon \not =0$.
    \item We have $L'(1/2, \Pi, \As_{E/F} \otimes \mu) \not =0$ and 
   $ \Hom_{\BC[\U(V)(\BA_f)]}(\pi_f \otimes \Omega(\mu, \epsilon), \BC) \not =0.$
\end{enumerate}
\end{conjecture}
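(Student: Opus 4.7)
The plan is to follow the relative trace formula (RTF) strategy pioneered by Jacquet--Rallis and developed in the arithmetic setting by W.~Zhang, Liu and others for the original arithmetic GGP conjecture. One seeks matching pairs of distributions: an analytic RTF on $\GL_n(\BA_E) \times V_0'(\BA_{E_0})$ whose spectral expansion detects $L'(1/2,\Pi,\As_{E/F}\otimes \mu)$, and an arithmetic RTF on the unitary Shimura varieties for $\U(V)$ whose spectral expansion computes the twisted Fourier--Jacobi arithmetic functional $\FJ_\epsilon$. A term-by-term comparison of the geometric sides is then controlled by local identities, of which the twisted AFL of Theorem \ref{Twisted AFL theorem} is the main nonarchimedean unramified input.

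First, following \cite{Liu-FJcycles} and the framework behind \cite{AFL-JEMS}, I would set up a twisted Fourier--Jacobi arithmetic RTF on the relevant \RSZ integral models: its geometric expansion yields a weighted sum over regular semi-simple orbits $(g,u)$ of arithmetic intersections $\Int^{\Herm,\BV}(g,u)$, whose local contribution at an inert place of good reduction is exactly what the twisted AFL computes. Dually, I would differentiate at $s=1/2$ a twisted Jacquet--Rallis type analytic RTF and obtain a geometric side mixing ordinary orbital integrals $\Orb$ and derived orbital integrals $\partial\Orb$, with the twisted AFL providing the matching at inert hyperspecial places and the twisted fundamental lemma of Theorem \ref{Twisted FL} handling the non-intersecting orbits.

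The principal technical obstacle I anticipate is establishing twisted arithmetic transfers at the remaining places --- split, ramified inert, and (most delicately) archimedean --- together with transfers for enough nonstandard test functions to sweep out the full spectrum. Even for the group and semi-Lie versions, archimedean transfer is known only in low rank or under restrictive hypotheses. A second serious issue is isolating the cuspidal $\pi$-contribution on the spectral side: the arithmetic RTF is not absolutely convergent, and one needs a careful truncation along with pseudo-coefficients or Hecke multipliers to kill residual, non-tempered, and endoscopic terms. A third, more conceptual point is to confirm that the mirabolic special cycles and their intersection functionals indeed globalize to give the sought integral model of the twisted Fourier--Jacobi cycle, so that arithmetic intersection on the global side coincides with the local formula $\Int^{\Herm,\BV}(g,u)$ after decomposition.

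Assuming these inputs, the two spectral sides should identify as follows: the analytic side produces $L'(1/2,\Pi,\As_{E/F}\otimes \mu)$ multiplied by local factors encoding the branching invariant $\Hom_{\BC[\U(V)(\BA_f)]}(\pi_f \otimes \Omega(\mu,\epsilon),\BC)$, while the arithmetic side produces $\FJ_\epsilon$ evaluated on matching test vectors. The equivalence $(1)\Leftrightarrow (2)$ then follows by varying test functions and invoking linear independence of characters, combined with the classical twisted GGP conjecture of \cite{twistedGGP,Wang-TGGP} to interpret the local Hom condition. Consistent with the state of the art for the original AGGP \cite{GGP-AGGP}, the first full result one should realistically hope to prove is the implication $(2)\Rightarrow (1)$ under a supercuspidal place hypothesis and standard test vector conditions, with the converse and the general case requiring the arithmetic induction framework introduced earlier in the paper to reduce to lower-rank subcases.
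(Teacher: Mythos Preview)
This statement is a \emph{conjecture}, not a theorem: the paper does not prove it, and indeed explicitly presents it as open. Section~\ref{section: TAGGP} only sketches the relative trace formula approach and explains how the twisted AFL enters as a local ingredient on the geometric side. Your proposal is therefore not being compared against a proof but against an outline, and you have correctly recognized that a full proof is out of reach: you flag the missing archimedean and ramified transfers, the spectral isolation problem, and the globalization of the cycle construction as genuine obstacles, and you downgrade your expected outcome to a conditional implication $(2)\Rightarrow(1)$ under auxiliary hypotheses.

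Your strategic outline is broadly consonant with the paper's. Both envision comparing an analytic Weil-type RTF on the $\GL$ side (whose derivative at the center detects $L'(1/2,\Pi,\As_{E/F}\otimes\mu)$) with an arithmetic intersection-theoretic distribution on the unitary side, matched orbit-by-orbit via the twisted FL and twisted AFL at good inert places. The paper's Section~\ref{section: TAGGP} is somewhat more concrete about the geometric packaging: it reduces the Beilinson--Bloch--Poincar\'e height of twisted Fourier--Jacobi cycles, via doubling (magic) divisors on $X_K\times X_K$ following \cite{Liu-FJcycles}, to the global arithmetic invariant functional $\mathcal{I}_K^z(f,\phi)$ and then to the local functional $\mathcal{I}_{v_0}(f,\phi)$, which is precisely $\Int_{v_0}(\xi,\Phi)$ and hence governed by the twisted AFL. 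You gesture at this reduction (``mirabolic special cycles \dots globalize to give the sought integral model'') but do not name the doubling-divisor mechanism, which is the specific device the paper uses to pass from the Fourier--Jacobi cycle to a Kudla--Rapoport-type intersection. That said, since neither you nor the paper claims a proof, this is a difference of detail in an outline rather than a gap in an argument.
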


Allowing such twisting in arithmetic Gan--Gross--Prasad conjectures will be useful to handle arithmetics of motives (e.g. Asai motives), see for instance works of \cite{liu2016hirzebruch} \cite{loeffler2023euler}\cite{loeffler2020p} related to modular forms and elliptic curves.

The proof of our twisted AFL is based on induction and globalization, using above mirabolic special cycles, arithmetic induction systems and the method of \cite{AFL-Wei2019}. Firstly, we show that both sides of twisted AFL are arithmetic induction systems under new relative Cayley maps on hermitian structures. Secondly, we consider globalizations over a totally real field $F_0$, which are expected to be automorphic distributions for $\SL_{2,F_0}$. We show the adelic difference is automorphic, using the modularity of geometric theta series on unitary Shimura varieties with hyperspecial levels \cite{BHKRY}\cite{AFL-JEMS}. We prove local-global decomposition formulas, which indicate the global sides are correct generalizations of the local sides. By such local-global compatibility and arithmetic induction, we deduce many Fourier coefficients of the adelic difference of two global sides vanish. We deduce the difference is a constant by modularity. Hence the local difference vanishes, and we obtain the twisted AFL identity.

Such globalization can be understood in general via what we call \emph{Weil type relative trace formulas}. In a general setup, let $(H_1, G, H_2)$ be a tuple of algebraic groups over a number field $F_0$ where $H_i \subseteq G$ ($i=1,2$) are subgroups and $G$ is reductive. Consider the kernel function
\[
K_f(x, y)= \sum_{\gamma \in G(F)} f(x^{-1}\gamma y), \quad f \in \CS(G(\BA)), x,y \in G(\BA).
\]
Compared to the usual relative trace formula, we need an algebraic representation $V$ of $H_1$. Then the \emph{Weil type relative trace formula} for $(H_1, G, H_2, V)$ is the distribution (which may be not convergent in general)
\begin{equation}
I(f, \phi)= \int_{[H_1] \times [H_2]} K_f(h_1, h_2) \theta_\phi(h_1) dh_1 dh_2, \quad f \in \CS(G(\BA)), \phi \in \CS(V(\BA))
\end{equation}
where theta series $\theta_\phi(g)=\sum_{u \in V(F_0)} \phi(g^{-1}u)$ gives an automorphic realization of $\phi \in \CS(V(\BA))$. If $f \otimes \phi$ has regular supports at a place of $F_0$, then we have a decomposition into orbital integrals
\[
I(f, \phi)= \sum_{(\gamma,u) \in [H \backslash (G/H \times V)]_\rs(F_0) } \Orb((\gamma, u), f \otimes \phi).
\]
In this paper, we consider similar \emph{arithmetic Weil type relative formulas}
\[
I^1(f, \phi):=(Z_V(\phi), R(f)_*\Sh_H)_{\Sh_G}
\]
where $Z_V(\phi), \phi \in \CS(V(\BA_f))$ are global versions of special linear cycles and $R(f), f \in \CS(G(\BA_f))$ are Hecke translations on the Shimura variety $\Sh_G$, and $(-,-)_{\Sh_G}$ is an Arakelov type arithmetic intersection pairing defined using regular integral models.

\subsection{More applications and further questions}

It is interesting to study the geometry of these new special linear cycles (and relations to \cite{BFN}\cite{BZSV}), which could be used in arithmetic relative Langlands program and may have globalization in good cases. We expect our method to give a uniform way of formulating and proving arithmetic fundamental lemmas for other good tuples $(G, X, Y)$ combining the idea of twisted fixed cycles and linear special cycles. The same method can give an arithmetic transfer result for vertex lattices as in \cite{ZZhang2021}. We could formulate similar questions for $H=\U(V) \to G=\U(V_{E_0})$ where $E_0/F_0$ is any quadratic \'etale algebra. For simplicity of notations, we choose not to include arithmetic transfers and general twisted cases.

Finally, we give an overview of the paper. Firstly we introduce the local setup and formulate a twisted AFL. In Section \ref{section: special cycles and RZ}, we introduce and study mirabolic special cycles on Rapoport--Zink spaces. In Section \ref{section: twisted orbits, FL}, we formulate the matching of orbits and orbital integrals and recall the twisted fundamental lemma. In Section \ref{section: TAFL}, we introduce twisted fixed cycles and formulate our twisted AFL. In Section \ref{section: arithmetic induction}, we introduce arithmetic induction systems and new relative Cayley maps, and show both sides of the twisted AFL are such examples using mirabolic special cycles. In Section \ref{section: global special cycles}, we introduce global Kudla--Rapoport divisors and twisted complex multiplication cycles on unitary Shimura varieties. In Section \ref{section: local-global}, we form global analytic and geometric generating functions and prove local-global decomposition formulas for non-singular Fourier coefficients via basic uniformizations. In Section \ref{section: final proof}, we establish the twisted AFL. In Section \ref{section: TAGGP}, we formulate twisted arithmetic Gan-Gross-Prasad conjecture and indicate its relation to our twisted AFL.

\subsection*{Acknowledgements}
I thank M. Rapoport, C. Li, A. Mihatsch and W. Zhang for helpful discussions. I am also grateful for the hospitality of the Simons Laufer Mathematical Sciences Institute in Spring 2023, where the project began.

\subsection{Notations and Conventions}
\begin{itemize}
\item  Let $X$ be an algebraic variety with an action of an algebraic group $H$ over a field $F_0$. We say $x \in X(F_0)$ is called regular semi-simple if the stabilizer of $x$ inside $G$ is trivial, and the orbit of $x$ is closed in $X$. We often denote by $(-)_\rs$ (resp. $[-]_\rs$) the space of regular semi-simple elements (resp. orbits).
	\item Let $p$ be an odd prime.
	\item Let $F_0/\BQ_p$ be a finite extension. Choose a uniformizer $\varpi$ of $F_0$.  Denote by $\breve{F_0}$ the $p$-adic completion of a maximal unramified extension of $F_0$ and $O_{\breve{F_0}}$ be its ring of integers.
        \item Denote by $\BF_q$ (resp. $\BF$) the residue field of $F_0$ (resp. $\breve{F_0}$). Let $\sigma_{\breve{F_0}/F_0} \in \Gal(\breve{F_0}/F_0)$ be the canonical lifting of $q$-Frobenius $x \rightarrow x^q$ on $\BF$. 
        \item $F/F_0$ is an unramified quadratic extension of $p$-adic local fields. We fix an embedding $F \to \breve{F_0}$. Denote by $\overline{(-)} \in \Gal(F/F_0)$ the non-trivial Galois involution.
        \item Denote by $\val=\val_{F_0}: F^\times_0 \to \BZ$ the standard valuation of $F_0$, in particular $\val_{F_0}(\varpi)=1$. Let $\eta=\eta_{F/F_0}$ be the quadratic character $\eta(x)=(-1)^{\val_{F_0}(x)}: F^{\times}_0 \to \{\pm 1\}$. 
    \item  Let $t \in \BR$ and $n \in \BZ$. For $h \in \SL_2(\BR)$ under the Iwasawa decomposition 
		$$h=\begin{pmatrix}
			1 & b \\ 0 & 1
		\end{pmatrix}\begin{pmatrix}
			a^{1/2} & 0 \\ 0 & a^{-1/2}
		\end{pmatrix} \kappa_\theta, \quad a \in \BR_{+}, \quad b \in \BR, $$ 
		and 
		$$
		\kappa_\theta=\begin{pmatrix}
			\cos \theta & \sin \theta \\ -\sin \theta & \cos \theta
		\end{pmatrix} \in \mathrm{SO}_2(\BR),
		$$
The standard weight $n$ Whittaker function on $h \in \SL_2(\BR)$ is defined as		\begin{equation}\label{defn: Whittaker SL2}
			W_{t}^{(n)} (h):=|a|^{n/2} e^{2\pi i t (b+a i)} e^{in\theta}.
		\end{equation}
  \item In global set up, denote by $F_0$ a totally real number field. Let $F_{0,+}$ be the cone of totally positive elements in $F_0$. Let $F/F_0$ be a CM quadratic extension. Denote by $\Phi$ a CM type of $F$. The CM type $\Phi$ is called unramified at $p$, if $\Phi \otimes \BQ_p: F \otimes \BQ_p \to \ov \BQ_p$ is induced from a CM type of the maximal unramified (over $\BQ_p$) subalgebra of $F \otimes \BQ_p$.
  \item  In global set up, we denote by $\BA$, $\BA_{0}$ for the adele rings of $\BQ$ and $F_0$ respectively. Denote by $(-)_f$ the terms of finite adeles. Denote by $(-)_p$ (resp. $(-)_v$) the local term at a place $p$ of $\BQ$ (resp. a place $v$ of $F_0$). Let $\Delta$ be a finite collection of places of $F_0$. Denote by $(-)_{\Delta}$ the product of $(-)_v$ over all places $v \in \Delta$. 
  \item 
  In global set up, for Weil representations, we use the additive character  $\psi_{F_0}: F_0 \backslash \BA_{F_0} \rightarrow \mathbb C$ given by $\psi_{F_0}:=\psi_\BQ \circ \mathrm{Tr}_{F_0/\BQ}$, where $\psi_{\BQ}: \BQ \backslash \BA  \rightarrow \BC$ is the standard additive character. 
  \item For a compact open subgroup $K_0 \subseteq \SL_2(\BA_{0, f})$, denote by $\CA_{\rm hol}(\SL_2(\BA_{F_0}), K_0, n)_{\ov{\BQ}}$ the space of holomorphic automorphic forms on $\SL_2(\BA_{F_0})$ that are left invariant under $\SL_2(F_0)$, right invariant under $K_0$, of parallel weight $n$ and with Fourier coefficients in the algebraic closure $\ov{\BQ}$ of $\BQ$.
\end{itemize}

\section{Mirabolic special cycles on Rapoport--Zink spaces}\label{section: special cycles and RZ}

Let $F/F_0$ be an unramified quadratic extension of $p$-adic local fields. For $n \geq 1$, there are two isomorphism classes of $n$-dimensional $F/F_0$ hermitian spaces, and we denote by $V$ the split one and $\BV$ the non-split one.

\subsection{The Rapoport--Zink spaces}

Let $r, s \geq 0$ be two integers such that $r+s=n$. For any $\Spf O_{\breve F_0}$-scheme $S$, \emph{a basic tuple $(X, \iota)$ of signature $(r, s)$} over $S$ is the following data:
\begin{itemize}
	\item
	$X$ is a formal $O_{F_0}$-module over $S$ of relative $O_{F_0}$-height $2n$ and dimension $n$ which is strict, i.e. the induced action of $O_{F_0}$ on $\Lie X$ is via the structure morphism $O_{F_0} \to \CO_S$. 
	\item $\iota: O_F \to \End(X)$ is an $O_F$-action on $X$ that extends the $O_{F_0}$-action such that the \emph{Kottwitz condition} of signature $(r,s)$ holds for all $a \in O_{F} $:
	\begin{equation}\label{eq: Kottwitz}
		\charpol (\iota(a)\mid \Lie X) = (T-a)^r(T-\ov a)^{s} \in \CO_S[T].
	\end{equation} 
\end{itemize}

\emph{A principally polarized basic tuple of signature $(r, s)$} is a tuple $(X, \iota, \lambda)$ where $(X, \iota)$ is a basic tuple of sign $(r,s)$ and $\lambda: X \to X^\vee$ is a principal polarization that is $O_F/O_{F_0}$ semi-linear, i.e. the Rosati involution $\mathrm{Ros}_\lambda$ on $\iota: O_{F} \to \End(X)$ agrees with $a \mapsto \overline{a}$. An isomorphism $(X_1, \iota_1) \cong (X_2, \iota_2)$ (resp. $(X_1, \iota_1, \lambda_1) \cong (X_2, \iota_2, \lambda_2)$) between two such basic tuples (resp. principally polarized basic tuples) is an $O_F$-linear isomorphism $\varphi: X_1 \cong X_2$ (resp. $\varphi: X_1 \cong X_2$ such that $\varphi^*(\lambda_2)=\lambda_1$).

Up to $O_F$-linear quasi--isogenies, there exists a unique such tuple $(\BX, \iota_{\BX}, \lambda_{\BX})$ of sign $(n-1,1)$ over $\BF$. Fix one schoice of $(\BX, \iota_{\BX}, \lambda_{\BX})$ as the {\em framed principally polarized basic tuple}. 

\begin{definition}\label{defn: local GL RZ space}
	The general linear Rapoport--Zink space is the functor 
	$$\CN_{n}^{\GL} \to \Spf O_{\breve F_0} $$
	sending $S$ to the set of isomorphism classes of tuples $(X, \iota, \rho)$, where
	\begin{itemize}
		\item $(X, \iota)$ is a basic tuple of signature $(n-1,1)$ over $S$.
		\item $\rho: X \times_{S} {\ov S} \to \BX \times_\BF \ov S $ is an $O_F$-linear quasi-isogeny of height $0$ over $\ov S:=S \times_{O_{\breve F_0}} \mathbb F$.
	\end{itemize}
\end{definition}

\begin{definition}\label{defn: local RZ space}
The unitary Rapoport--Zink space is the functor 
	$$\CN_{n} \to \Spf O_{\breve F_0} $$
	sending $S$ to the set of isomorphism classes of tuples $(X, \iota, \lambda, \rho)$, where
	\begin{itemize}
		\item $(X, \iota, \lambda)$ is a principally polarized basic tuple of signature $(n-1, 1)$ over $S$.
		\item $\rho: X \times_{S} {\ov S} \to \BX \times_\BF \ov S $ is an $O_F$-linear quasi-isogeny of height $0$ over the reduction $\ov S:=S \times_{O_{\breve F_0}} \mathbb F$ such that $\rho^*(\lambda_{\BX, \ov S}) = \lambda_{\ov S}$.
	\end{itemize}
\end{definition}

\begin{proposition}
The functor $\CN_n^{\GL}$ (resp. $\CN_n$) is representable by a formal scheme locally formally of finite type, which is formally smooth over $\Spf O_{\breve F_0}$ of relative dimension $2(n-1)$ (resp. $n-1$). 
\end{proposition}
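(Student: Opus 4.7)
The plan is to prove representability by appealing to the general theory of moduli of formal $O_{F_0}$-modules with additional structure, and then to compute the tangent space and verify unobstructedness of deformations via Grothendieck--Messing theory, which gives both formal smoothness and the relative dimension.

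For representability, I would invoke the Rapoport--Zink representability theorem (\cite{RZ96}, suitably adapted to strict $O_{F_0}$-modules via the formalism of Ahsendorf--Cheng--Zink or the relative Dieudonné theory), applied to the PEL data. The functor $\CN_n^{\GL}$ is of EL type (endomorphisms without polarization) and $\CN_n$ is of PEL type (with the principal polarization $\lambda$ inducing the given semi-linear Rosati involution). In both cases the group theoretic input is a reductive group over $O_{F_0}$: $\Res_{O_F/O_{F_0}}\GL_n$ in the EL case and a quasi-split unitary group $\U_n$ in the PEL case. Standard Rapoport--Zink theory then produces a formal scheme locally formally of finite type over $\Spf O_{\breve F_0}$ representing each functor.

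For formal smoothness and dimension, I would compute the tangent space at a point $(X,\iota,\rho)$ (resp. $(X,\iota,\lambda,\rho)$) via Grothendieck--Messing. The relative Dieudonné crystal $M$ of $X$ is a locally free module of rank $2n$ with an $O_F$-action, so it decomposes under the two $O_{F_0}$-embeddings $O_F \rightrightarrows \CO_S$ as $M=M_0\oplus M_1$ with each $M_i$ locally free of rank $n$. The Hodge filtration $\mathrm{Fil}^1\subseteq M$ similarly decomposes as $\mathrm{Fil}^1_0\oplus \mathrm{Fil}^1_1$, and the Kottwitz signature condition \eqref{eq: Kottwitz} with $(r,s)=(n-1,1)$ translates into $\mathrm{Fil}^1_0$ being a line in $M_0$ and $\mathrm{Fil}^1_1$ being a hyperplane in $M_1$ (or the opposite, depending on the chosen embedding). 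Lifts of the Hodge filtration along a square-zero thickening $S\hookrightarrow \widetilde S$ with ideal $J$ are, by Grothendieck--Messing, classified by $O_F$-stable lifts $\widetilde{\mathrm{Fil}}^1\subseteq \widetilde M$, a torsor under
\[
\Hom_{\CO_S}\!\bigl(\mathrm{Fil}^1_0,\, M_0/\mathrm{Fil}^1_0\bigr)\otimes J \;\oplus\; \Hom_{\CO_S}\!\bigl(\mathrm{Fil}^1_1,\, M_1/\mathrm{Fil}^1_1\bigr)\otimes J,
\]
of ranks $1\cdot(n-1)$ and $(n-1)\cdot 1$ respectively. This torsor is always nonempty (unobstructedness), giving formal smoothness, and the tangent space has total rank $2(n-1)$, proving the claim for $\CN_n^{\GL}$.

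For $\CN_n$ the principal polarization $\lambda$ induces an $O_F$-semi-linear isomorphism $M\cong M^\vee$ that interchanges the two isotypic components $M_0$ and $M_1$ and identifies $\mathrm{Fil}^1$ with its annihilator. Consequently a lift $\widetilde{\mathrm{Fil}}^1_1$ is determined by $\widetilde{\mathrm{Fil}}^1_0$, so only the first summand above survives, yielding relative dimension $n-1$; unobstructedness is preserved, giving formal smoothness. The main technical point I expect to be delicate is the careful bookkeeping of the Kottwitz signature condition in the strict $O_{F_0}$-setting together with the semi-linearity of $\lambda$, in order to be sure which isotypic component carries the line versus the hyperplane and that the polarization compatibility indeed halves the tangent space rather than imposing a non-flat condition; this is handled by noting that the compatibility is exactly the symplectic/unitary reduction of the EL deformation problem, which is well known to be flat (equivalently, smooth) over $O_{\breve F_0}$.
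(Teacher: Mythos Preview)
Your proof is correct and follows essentially the same approach as the paper: the paper also cites Rapoport--Zink for representability and Grothendieck--Messing theory for formal smoothness and the dimension formulas, without spelling out the tangent space computation you carried out. Your explicit decomposition of the Hodge filtration under the $O_F$-action and the halving argument via the polarization are exactly the details underlying the paper's one-line appeal to Grothendieck--Messing.
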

\begin{proof}
The representability follows from the method of Rapoport--Zink \cite[Thm. 2.16]{RZ96}. The formal smoothness and dimension formulas follow from Grothendieck--Messing theory.
\end{proof}

\subsection{Symmetries and involutions on Rapoport--Zink spaces}

Let $\BE$ be a formal $O_{F_0}$-module over $\BF$ of relative height $2$ and dimension $1$. Complete $\BE$ into a principally polarized framing basic triple $(\BE, \iota_{\BE}, \lambda_{\BE})$ of \emph{signature $(1, 0)$} over $\BF$. Denote by $(\CE, \iota_\CE, \lambda_\CE)$ the basic principally polarized triple over $\Spf O_{\breve F_0}$ obtained from the canonical lifting of $\BE$.

Let \emph{the space of special quasi-homomorphisms} be the $F$-vector space 
\begin{equation}\label{eq: space of special quasi-homos}
\Hom_{O_F}^{\circ}(\BE, \BX) := \Hom_{O_F}(\BE, \BX) \otimes \BQ
\end{equation}
equipped with the hermitian form:
\begin{equation}
	(x,y):= \lambda_{\BE}^{-1} \circ y^{\vee} \circ \lambda_{\BX} \circ x \in \Hom_{O_F}^{\circ}(\BE, \BE) \cong F, \quad \quad \forall x, \, y \in \Hom_{O_F}^{\circ}(\BE, \BX).
\end{equation}
Then $\Hom_{O_F}^{\circ}(\BE, \BX)$ is a non-split hermitian space of dimension $n$ and we may write $\BV=\Hom_{O_F}^{\circ}(\BE, \BX)$. The linear dual of $\BV$ is $\BV^*= \Hom_{O_F}^{\circ}(\BX, \BE)$. The hermitian form on $\BV$ gives an isomorphism $\lambda_\BV: \BV \to \BV^*$, hence an involution $\sigma(-)=\lambda_\BV^{-1} \circ (-)^* \circ \lambda_\BV : \GL(\BV) \to \GL(\BV)$ with fixed subgroup $\U(\BV)$. There is an identification $\GL(\BV)(F_0) \cong \Aut^\circ(\BX, \iota_{\BX})$ where $\sigma$ is identified with 
\[
\sigma(g)= \lambda_\BX^{-1} \circ (g^\vee)^{-1} \circ \lambda_\BX, \quad \forall g \in \Aut^\circ(\BX, \iota_{\BX}).
\]

The general linear group $\GL(\BV)(F_0) \cong \Aut^\circ(\BX, \iota_{\BX})$ acts on $\CN_n^\GL$ by acting on the framing:
\[
g \cdot (X, \iota, \rho) = (X, \iota, g \circ \rho). 
\]
Similarly, the unitary group $\U(\BV)(F_0) \cong \Aut^\circ(\BX, \iota_{\BX}, \lambda_{\BX})$ acts on $\CN_n$ by acting on the framing. By Drinfeld's rigidity of quasi-isogenies, we have a $\U(\BV)(F_0)$-equivariant embedding 
\[
i_{\mathrm{BC}}: \CN_n \to \CN_n^\GL.
\]

\begin{definition}
	Consider the Galois involution 
	$$\sigma: \CN^\GL_n \to \CN^\GL_n
	$$
	sending $(X, \iota, \rho) \to (X^\vee,  \overline{\iota^\vee}, \lambda_\BX^{-1} \circ (\rho^\vee)^{-1} ).$
\end{definition}

\begin{proposition}
	The action of $\GL(\BV)(F_0)$ on $\CN^\GL_n$ is $\sigma$-equivariant. The fixed point locus of $\sigma$ on $\CN^\GL_n$ recovers $\CN_n$:
	\[
	(\CN^\GL_n)^{\sigma=\id} = \CN_n.
	\]
\end{proposition}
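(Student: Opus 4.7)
The plan is to split the statement into (i) the equivariance of $\sigma$ with respect to the $\GL(\BV)(F_0)$-action, which reduces to a direct formal computation, and (ii) the identification of the fixed locus with $\CN_n$, which I expect to follow from Drinfeld rigidity together with reconstruction of a canonical principal polarization.

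For (i), both sides of the desired identity $\sigma(g\cdot(X,\iota,\rho))=\sigma(g)\cdot\sigma(X,\iota,\rho)$ have the same underlying basic tuple $(X^\vee,\overline{\iota^\vee})$, so one only needs to match framings. Using $(g\rho)^\vee=\rho^\vee\circ g^\vee$, the left framing unwinds to $\lambda_\BX^{-1}\circ(g^\vee)^{-1}\circ(\rho^\vee)^{-1}$. Substituting $\sigma(g)=\lambda_\BX^{-1}\circ(g^\vee)^{-1}\circ\lambda_\BX$ into the right framing $\sigma(g)\circ\lambda_\BX^{-1}\circ(\rho^\vee)^{-1}$ yields the same expression after the cancellation $\lambda_\BX\circ\lambda_\BX^{-1}=\id$. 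I expect this step to be immediate once the definitions are unwound.

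For (ii), I would first observe that $i_{\mathrm{BC}}\colon\CN_n\to\CN_n^\GL$ lands in the $\sigma$-equalizer: the polarization $\lambda$ carried by a point of $\CN_n$ is by definition $O_F/O_{F_0}$-semilinear and has reduction $\rho^\vee\circ\lambda_\BX\circ\rho$, which is precisely the data of an isomorphism $\sigma(X,\iota,\rho)\cong(X,\iota,\rho)$ in $\CN_n^\GL$. For the converse, starting from a $\sigma$-fixed point $(X,\iota,\rho)$, I would use that $\CN_n^\GL(S)$ has trivial automorphism groups (an $O_F$-linear self-isogeny of $X$ compatible with the quasi-isogeny $\rho$ to $\BX$ must be the identity, by rigidity), so the $\sigma$-fixed structure is witnessed by a unique isomorphism; inverting it yields a canonical $O_F$-semilinear map $\lambda\colon X\to X^\vee$ whose reduction is forced by framing compatibility to equal $\rho^\vee\circ\lambda_\BX\circ\rho$.

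The main obstacle will be upgrading this $\lambda$ from a morphism of formal $O_{F_0}$-modules to a principal polarization in the sense of Definition \ref{defn: local RZ space}. Symmetry holds on $\ov S$ because $\rho^\vee\circ\lambda_\BX\circ\rho$ is manifestly symmetric and $\lambda_\BX$ is itself a principal polarization; the plan is to lift this symmetry to $S$ by applying Drinfeld rigidity of quasi-isogenies to both $\lambda$ and its dual and comparing them. That $\lambda$ is an honest isogeny of degree one rather than just a quasi-isogeny follows because $\rho$ is of height $0$ and $\lambda_\BX$ is principal, forcing $\lambda$ to be of height $0$. Positivity propagates from $\ov S$ by Grothendieck--Messing theory, and the framing relation $\rho^*(\lambda_\BX)=\lambda_{\ov S}$ is built into the construction. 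Together these place $(X,\iota,\lambda,\rho)$ in $\CN_n(S)$ and provide the inverse to $i_{\mathrm{BC}}$, completing the identification of the fixed locus with $\CN_n$.
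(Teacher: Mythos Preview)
Your proposal is correct and follows essentially the same route as the paper's proof. For (i) you unwind the framings exactly as the paper does, and for (ii) you, like the paper, identify the $\sigma$-fixing isomorphism with a lift of $\lambda_\BX$ and promote it to a principal polarization; the paper simply asserts ``Hence $\varphi$ is a principal polarization'' where you take extra care to check symmetry, height $0$, and compatibility with the Rosati condition (your invocation of positivity via Grothendieck--Messing is harmless but unnecessary here, since for strict formal $O_{F_0}$-modules a polarization is just a symmetric isogeny).
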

\begin{proof}
We have $\sigma (g. (X, \iota, \rho)) = (X^\vee, \overline{\iota^\vee},  \lambda_\BX^{-1} \circ ((g \circ \rho)^\vee)^{-1})= (X^\vee, \overline{\iota^\vee},  \lambda_\BX^{-1} \circ (g^\vee)^{-1} \circ (\rho^\vee)^{-1}) = \sigma(g) (\sigma(X, \iota, \rho)).$Hence the $\GL(\BV)(F_0)$-action on $\CN^\GL_n$ is $\sigma$-equivariant. If $(X, \iota) \in (\CN^\GL_n)^{\sigma=\id}$, then there is an $O_F$-linear isomorphism $\varphi: X \cong X^\vee$ compatible with the framing $\rho$ and $\lambda_\BX^{-1} \circ (\rho^\vee)^{-1}$. This is equivalent to that $\varphi$ is a lifting of $\lambda_\BX$ to $X$. Hence $\varphi$ is a principal polarization on $X$ and we see that $(X, \iota, \varphi) \in \CN_n$. Hence $(\CN^\GL_n)^{\sigma=\id} = \CN_n$. 
\end{proof}

\begin{remark}
The above proposition can be regarded as a geometrization of base change and Galois descent for representations of $\U(V)$ and $\GL(V)$.
\end{remark}

\subsection{Geometry of mirabolic special cycles and Kudla--Rapoport cycles}

Consider a non-zero vector $u \in \BV - 0$. The Kudla--Rapoport cycle \cite{KR2011local}
$$
\CZ(u) \to \CN_n
$$ is the closed formal subscheme of $\CN$ sending a $\Spf O_{\breve{F_0}}$-scheme $S$ to the subset $(X, \iota_X, \lambda_X, \rho_X) \in \CN(S)$ such that
	\[ (\rho_{X})^{-1}  \circ u: \BE \times_{\BF} \ov S \to X \times_{S} \ov S  \]  
	lifts to a homomorphism from $\CE$ to $X$ over $S$. By \cite[Prop 3.5]{KR2011local}, $\CZ(u)$ is a relative Cartier divisor in $\CN_n$.

We now introduce two kinds of \textit{mirabolic special cycles} on $\CN^\GL_n$, which recover the notion of Kudla--Rapoport cycles after base change to $i_{\mathrm{BC}}: \CN_n \to \CN^\GL_n$.

\begin{definition}
For $u \in \BV - 0$, the mirabolic special cycle
	$$
	\CZ^{\GL}(u) \to \CN^\GL_n
	$$ is the subfunctor of $\CN^\GL$ sending a $\Spf O_{\breve{F_0}}$-scheme $S$ to $(X, \iota_X, \rho_X) \in \CN^\GL_n(S)$ such that
	\[ (\rho_{X})^{-1}  \circ u: \BE \times_{\BF} \ov S \to X \times_{S} \ov S  \]  
	lifts to a homomorphism from $\CE$ to $X$ over $S$. For $u^* \in \BV^* -0 $,  the mirabolic special cycle
	$$
\CZ^{\GL^*}(u^*) \to \CN^\GL_n
$$ is the subfunctor of $\CN^\GL$ sending a $\Spf O_{\breve{F_0}}$-scheme $S$ to $(X, \iota_X, \rho_X) \in \CN^\GL_n(S)$ such that $u^* \circ \rho_{X} :  X \times_{S} \ov S \to \BE \times_{\BF} \ov S$  lifts to a homomorphism from $X$ to $\CE$ over $S$.
\end{definition}

\begin{theorem}\label{prop: geometry of mirabolic cycle}
\begin{enumerate}
\item The subfunctors $\CZ^\GL(u)$ and $\CZ^{\GL^*}(u^*)$ are represented by formal schemes, and are relative Cartier divisors in $\CN_{n}^\GL$.
\item (The pullback formula) The diagram
      \[
\xymatrix{
	\CZ(u)  \ar[r] \ar[d] &  \CZ^\GL(u)  \ar[d]  \\  
	\CN_n  \ar[r] & \CN^\GL_n }
\]
is Cartesian, in other words $\CZ^\GL(u)|_{\CN_n} = \CZ(u)$.
	\item (Galois duality) $
	\sigma(\CZ^\GL(u^*))= \CZ^\GL(u), \, \mathrm{if} \, \lambda_\BX^{-1} \circ (u^*)^\vee \circ \lambda_\BE =  u.$
    \item (Galois descent) $[\CZ^\GL(u)]^{\sigma=\id}=\CZ(u)$.
	\item (Translation laws) For $a \in O_F^\times$, $\CZ^\GL(au)=\CZ^\GL(u)$. For $g \in \GL(\BV)(F_0)$, we have
   \[
g. \CZ^\GL(u)= \CZ^\GL(gu), \, \,  g. \CZ^{\GL^*}(u^*)= \CZ^{\GL^*}(gu^*).
    \]
	\item (Cancellation laws) If $(u, u^*)=1$, then $\CZ(u) \cap \CZ(u^*) \cong \CN_{n-1}^\GL$.
    \item (Functoriality) Let $n,m \geq 1$. Consider a natural embedding $\CN_n^\GL \times \CN_m^\GL \to \CN_{n+m}^\GL$ given by taking direct sums. For $(u_1, u_2) \in \BV_n \oplus \BV_m =\BV_{n+m}$ such $u_i \not =0$, we have 
    \[
    \CZ^\GL_{n+m}(u_1,u_2)|_{\CN_n^\GL \times \CN_m^\GL}= \CZ^\GL_{n}(u_1) \times \CZ^\GL_m(u_2).
    \]
\end{enumerate}
\end{theorem}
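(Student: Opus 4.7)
The overall strategy follows the Kudla--Rapoport framework of \cite{KR2011local}, adapted to the polarization-free setting of $\CN_n^\GL$. The genuine content lies in the representability/divisor property (1) and the cancellation law (6); parts (2)--(5) and (7) reduce to direct manipulation of the moduli descriptions together with rigidity of $\CE$. For (1), I would carry out a Grothendieck--Messing / Zink-style obstruction analysis. Given a test scheme $S$ and a square-zero thickening $S \hookrightarrow \widetilde{S}$, the obstruction to lifting a homomorphism $\CE_S \to X_S$ to $\CE_{\widetilde{S}} \to X_{\widetilde{S}}$ is controlled by the $O_F$-eigenspace decomposition of the Lie algebras. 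The Kottwitz condition of signature $(n-1,1)$ forces the ``wrong'' eigenspace $(\Lie X)_{\bar\alpha}$ to be a locally free quotient of rank $1$, whereas $(\Lie \CE)_{\bar\alpha} = 0$; hence the obstruction lives in a line bundle, cutting out a relative Cartier divisor $\CZ^\GL(u) \subset \CN_n^\GL$. A symmetric argument for lifts $X \to \CE$ handles the dual cycle $\CZ^{\GL^*}(u^*)$.

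Part (2) is tautological once one observes that the embedding $\CN_n \hookrightarrow \CN_n^\GL$ only forgets the principal polarization while retaining the underlying basic tuple, so the two defining conditions coincide. For (5), scaling $u$ by $a \in O_F^\times$ composes with an automorphism of $\CE$ and hence preserves the lifting condition; the $\GL(\BV)(F_0)$-equivariance holds because the group acts by modifying the framing $\rho$ by $g$, which is the same as replacing $u$ by $gu$ in the lifting problem. Part (7) follows because a homomorphism from $\CE$ to a direct sum $X_1 \oplus X_2$ lifts if and only if each component does. For (3), I would dualize the defining condition of $\CZ^\GL(u)$ and use $\lambda_\BX \colon \BX \cong \BX^\vee$ and $\lambda_\BE \colon \BE \cong \BE^\vee$ to recognize the $\sigma$-translated condition as the one defining $\CZ^{\GL^*}(u^*)$ with $u^* = \lambda_\BE^{-1} \circ u^\vee \circ \lambda_\BX$, after which (4) follows by combining (2) with the earlier identification $(\CN_n^\GL)^{\sigma=\id} = \CN_n$.

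The main obstacle is the cancellation law (6), which I read as $\CZ^\GL(u) \cap_{\CN_n^\GL} \CZ^{\GL^*}(u^*) \cong \CN_{n-1}^\GL$. Given $(X, \iota, \rho) \in \CZ^\GL(u)(S) \cap \CZ^{\GL^*}(u^*)(S)$, we obtain honest lifts $s \colon \CE \to X$ and $p \colon X \to \CE$ whose composition $p \circ s$ reduces on the special fiber to the scalar $(u, u^*) = 1$ inside $\End_{O_F}(\BE) \otimes \BQ \cong F$, i.e.\ to $\id_\BE$. Since $\CE$ is the canonical lift and $\End_{O_F}(\CE) \cong O_F$ injects into $\End_{O_F}(\BE)$ by reduction (Drinfeld rigidity), we deduce $p \circ s = \id_{\CE}$ over $S$. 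Consequently $X \cong \CE \oplus X'$ as strict formal $O_F$-modules, where $X' := \ker p$ carries an $O_F$-action of Kottwitz signature $(n-2, 1)$ (the Lie algebra signature subtracts $(1,0)$ from $(n-1,1)$) and a framing $\rho'$ induced from $\rho$ together with the analogous splitting $\BX \cong \BE \oplus \BX'$ on the framing side, which exists precisely because $u, u^*$ satisfy the same cancellation relation. Conversely, any $(X', \iota', \rho') \in \CN_{n-1}^\GL(S)$ yields an intersection point via $X = \CE \oplus X'$ with canonical inclusion and projection. Verifying that these two assignments are mutually inverse and functorial in $S$ is routine given the uniqueness afforded by rigidity.
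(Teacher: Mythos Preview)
Your approach aligns with the paper's almost point for point: Grothendieck--Messing obstruction for (1), moduli-level tautologies for (2)--(5) and (7), and the splitting argument for (6). The cancellation-law argument you give is in fact more carefully spelled out than the paper's (which just asserts the splitting of $X$ from the splitting of $\BX$ without invoking rigidity explicitly).

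There is, however, one genuine gap in your treatment of (1). The deformation argument shows that $\CZ^\GL(u)$ is locally cut out by a single equation, but this alone does not give a \emph{relative} Cartier divisor: you must also verify that this equation is nonzero and, crucially, not divisible by the uniformizer $\varpi$. Otherwise $\CZ^\GL(u)$ could contain an entire special-fiber component, or be empty. The paper handles this by bootstrapping from the unitary case: since $\CZ^\GL(u)|_{\CN_n} = \CZ(u)$ by (2), and since Kudla--Rapoport already established in \cite[Prop.~3.5]{KR2011local} that $\CZ(u)$ is a relative Cartier divisor on $\CN_n$, the restriction of the defining equation to $\CN_n$ is a non-zerodivisor not divisible by $\varpi$, whence the same holds on $\CN_n^\GL$. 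You should add this step; without it the ``relative'' in ``relative Cartier divisor'' is unjustified.
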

\begin{proof}
We prove the first part following the proof of \cite[Prop 3.5]{KR2011local}. The first part follows from the proof that Rapoport--Zink spaces are represented by formal schemes in \cite[Prop 2.9]{RZ96}.   

We now prove that $\CZ^\GL(u)$ is locally defined by the vanishing of one equation. Let $R$ be a $\Spf O_{\breve{F}}$-algebra and $I$ is an ideal of $R$ such that $I^2 = 0$. We equip $I$ with trivial divided powers and let $R_0 = R/I$. Given a morphism $\phi: \Spec R \to \CN^\GL_n$ whose restriction to $\Spec R_0$ factors through  $\CZ^\GL(u)$. Then we need to show the obstruction to factoring the given morphism $\phi$ through $\CZ^\GL(u)$ is given by the vanishing of one element in $I$. Let $X$ be the $A$-point of $\CN$ corresponding to $\phi$. The condition implies that there is an $O_F$-linear homomorphism $\alpha: D_\CE \to D_X$ of relative Dieudonne crystals over $\Spec A$, which maps the Hodge filtration $\CF_\CE $ to $\CF_X$ after tensoring with $A_0$. By Grothendieck--Messing theory, the obstruction is exactly the condition $\alpha(\CF_\CE) \subseteq \CF_X$. As $\alpha$ is $O_F$-linear and $\CF_\CE$ is generated by one element $\bar{1}_0$ as $O_F \otimes \BA$ module, the obstruction is equivalent to that $\alpha(\bar{1}_0) \in (F_X)_0$, i.e. $\alpha(\bar{1}_0)=0 \in (\Lie X)_0$. By the Kottwitz signature condition, $(\Lie X)_0$ is a locally free $A$-module of rank $1$. After choosing a local generator of $\Lie X_0$, we may identify $\alpha(\bar{1}_0)$ with an element in $A$ with zero image in $A_0$, i.e. an element in $I$. This shows that $\CZ^\GL(u)$ is a divisor.

To show that $\CZ^\GL(u)$ is a relative Cartier divisor, we only need to show any local generator of the divisor is non-zero and not divisible by $p$. As $\CZ^\GL(u)|_{\CN_n} = \CZ(u)$, this follows from the corresponding proposition for $\CZ(u)$ proved in \cite[Prop 3.5]{KR2011local}.

The following four properties follow by definition. For the cancellation law, if $(u,u^*)=1$ we can obtain a splitting of $\BX=\BX^\flat \oplus \BE$ where $\BX^\flat=\Ker u^*: \BX \to \BE$ and the direct summand $\BE$ is the image of $u$. Since $u, u^*$ lift to $\CZ(u) \cap \CZ(u^*)$, similarly we obtain a splitting of the universal tuple $X=X^\flat \oplus \CE$ over  $\CZ(u) \cap \CZ(u^*)$. Therefore $\CZ(u) \cap \CZ(u^*) \cong \CN_{n-1}^\GL$. Finally, the functoriality follows by moduli definitions.
\end{proof}

For any subset $S \subseteq \BV$, we define mirabolic special cycle $\CZ(S)^\GL \subseteq \CN^\GL_n$ as the lifting locus of $S$.

\begin{remark}[Special linear cycles on Hodge type Rapoport-Zink spaces] \label{defn: pullback mirabolic speical cycles}
Let $F/F_0$ be a quadratic extension. Let $V$ be a $n$-dimension $F$-vector space. We extend the notion of mirabolic special cycles to any (relative) Rapoport-Zink spaces $\CN_V:=\CN_{\Res_{F/F_0}\GL(V), b_V, \mu_V, K_p}$ of EL type over $F_0$. For simplicity, assume $K_p \leq \GL(V)$ is hyperspecial. 

Let $\BX$ be a framing object for $\CN_{V}$, and $\BE$ be a formal Lubin-Tate $O_F$-module of dimension $1$ (and $O_F$-height 1) over $\BF$ with its canonical lifting $\CE$ over $O_{\breve{F}}$. Define the space of special quasi-homomorphism to be the $F$-vector space $\BV=\Hom_{O_F}(\BE, \BX) \otimes \BQ$. Then for non-zero $u\in \BV$ we define
\[
Z_V(u) \to \CN_V
\]
as the closed formal subscheme of lifting locus of $u$ to $\CE \to \mathcal{X}$. Because of the $O_F /O_{F_0}$-signature condition given by $\mu$, it is still representable by a formal subscheme. Via pullback, we get \emph{special linear cycles} 
$$
Z_{V}(u) \to \CN_G:=\CN_{G, b, \mu, K}
$$ on general unramified Hodge type Rapoport-Zink spaces $(G, b, \mu)$ \cite{HP17}\cite{kim2018} associated to a representation $V$ of $G$ which induces a embedding of local Shimura datum $(G, b, \mu) \to (\GL(V), b_V, \mu_V)$. 
\end{remark}

\section{Twisted orbits, orbital integrals and fundamental lemmas}\label{section: twisted orbits, FL}

In this section, we introduce orbits and orbital integrals in our twisted setup. We recall the twisted fundamental lemma, which is a motivation to the twisted arithmetic fundamental lemma, and will be used in our global set up. 

\subsection{Orbits and orbital integrals}

On the unitary side, let $(V, h_V)$ be a $n$-dimensional hermitian space over $F$. Consider reductive groups over $F_0$:
$$
H_V=\U(V), \, G_V=\GL(V).
$$
Let $X_V=\Herm(V, h_V)$ be the set of hermitian structures $A$ on $V$ such that $(V, A) \cong (V, h_V)$. We identify 
\begin{equation}
G_V/H_V \cong X_V, \quad g \mapsto A_g:=gh_Vg^{-1}.
\end{equation}

Let $Y_V=V$. Consider the action of $g \in H_V$ on 
$$X_V \times Y_V = \Herm(V, h_V) \times V$$ 
given by $h. (g, u)= (h^{-1}gh, h^{-1}u)$. The pair $(g, u)$ is regular semi-simple (with respect to $H_V$) if and only if $u, A_gu, \hdots, A_g^{n-1}u$ are linearly independent. Choose a Haar measure on $H_V(F_0)$.
For $\Phi \in \CalS(X_V \times Y_V)$, define its orbital integral on $(g, u) \in X_V \times Y_V$ by
\begin{equation}
\Orb((g, u), \Phi):=\int_{H_V(F_0)} \Phi(h.(g,u)) dh
\end{equation}
which converges absolutely if $(g, u)$ is regular semi-simple. 

On the general linear side, let $V_0$ be a $n$-dimensional vector space over $F_0$. Let
$$
H'=\GL(V_0), \, G'=\GL(V_0) \times \GL(V_0), \, H'_{20}= \GL(V_0).
$$
We consider diagonal embeddings $H' \rightarrow G' \leftarrow H'_{20}$. The space $X'=G'/H'_{20}$ can be naturally identified with $\GL(V_0)$, where the natural left action of $H'$ on $X'$ is identified with conjugacy action on $\GL(V_0)$. Let $Y'=V_0':=V_0 \times V_0^*$ with the natural action of $H'$.

Similarly, we consider the action of $H'$ on 
$$X' \times Y'= \GL(V_0) \times V_0 \times V_0^*$$
given by $h. (\gamma, u'=(u_1,u_2))= (h^{-1}\gamma h, h^{-1}u_1, u_2h)$. We use the \emph{transfer factor}
\begin{equation}\label{transfer factor: semi-Lie}
	\omega(\gamma, u') :=(-1)^{\val_{F_0} (\det (\gamma^{i} u_1 )_{i=0}^{n-1}) } \in \{\pm 1\}.
\end{equation}

Choose a Haar measure on $H'(F_0)$. For $\Phi' \in \CalS(X' \times Y')$, define the $s$-variable orbital integral ($s \in \BC$)
\begin{equation}
\Orb((\gamma, u'), \Phi', s):= \omega(\gamma, u') \int_{H'(F_0)} \Phi'(h.(\gamma, u')) |h|^s \eta(h) dh
\end{equation}
which converges absolutely if $(\gamma, u')$ is regular semi-simple. Here we write $|h|=|\det h|, \eta(h)=\eta(\det h)$ for short. Set 
\[
\Orb((\gamma, u'), \Phi')= \Orb((\gamma, u'), \Phi', 0), \quad \partial \Orb ((\gamma, u'), \Phi')= \frac{d}{ds}|_{s=0} \Orb((\gamma, u'), \Phi', s).
\]

Consider the embedding $X_V \times V   \to \End(V) \times V \times V^{*}$ given by $(g,u) \to (g, \, u, \, u^*:=(x \mapsto h_V(x,u)))$. We may choose a basis of $V$ and assume that $V=V_0 \otimes_{F_0} F$.

\begin{definition}\label{defn: semi-Lie match elements}
A pair $(g,u) \in X_V \times Y_V$ matches a pair $(\gamma, u') \in  X' \times Y'$ if they are conjugated by $\GL(V)$ inside $\End(V) \times V \times V^*$.
\end{definition} 
The matching relation is equivalent to the matching of the following invariants:
\[
\det(T \id_{V}+g)=\det(T \id_{V}+\gamma) \in F'[T], \quad (g^iu,u)=u_2 (\gamma^iu_1), \quad 0 \leq i \leq n-1.
\]	

\begin{proposition}\label{matching of orbits: Un to GLn}
The matching relation gives a natural bijection of regular semi-simple orbits:
\[
[H' \backslash (X' \times Y')]_\rs \cong [H_V \backslash (X_V \times Y_V)]_\rs \coprod [H_{\BV} \backslash (X_\BV \times Y_\BV)]_\rs, \, \, (\gamma,u') \leftrightarrow (A, u).
\]
\end{proposition}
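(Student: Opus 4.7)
My plan is to parametrize regular semi-simple orbits on both sides by a common set of invariants, then realize the bijection through an explicit hermitian form construction.

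First I would classify orbits by invariants. On the general linear side, for the action $h.(\gamma, u_1, u_2) = (h^{-1}\gamma h, h^{-1} u_1, u_2 h)$ of $H' = \GL(V_0)$, regular semi-simplicity is equivalent to $u_1$ being a cyclic vector for $\gamma$ together with the dual condition that $u_2$ is cocyclic (the transfer factor $\det(\gamma^i u_1)_{i=0}^{n-1}$ detects the first condition). Under these conditions an $H'$-conjugation can place $(\gamma, u_1)$ in companion-matrix form, and the orbit is then uniquely determined by
\[
\charpol(\gamma) \in F_0[T] \quad \text{and} \quad a_i := u_2(\gamma^i u_1) \in F_0, \quad 0 \leq i \leq n-1.
\]
On the unitary side the analogous classification holds: for $(A, u)$ regular semi-simple with $A$ hermitian, $\charpol(A) \in F_0[T]$ and $b_i := h_V(A^i u, u) \in F_0$ both because $A^* = A$, and these invariants determine the $H_V$-orbit since $u$ is again a cyclic vector for $A$.

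Next I would produce the bijection explicitly. Given regular semi-simple $(\gamma, u')$ on the GL side, set $V := V_0 \otimes_{F_0} F$ and extend $\gamma$, $u_1$ by base change. Define a sesquilinear form $h$ on $V$ by
\[
h(\gamma^i u_1, \gamma^j u_1) := a_{i+j} \in F_0 \subset F, \quad 0 \leq i, j \leq n-1,
\]
where $a_k$ for $k \geq n$ is determined from $(a_0, \ldots, a_{n-1})$ via Cayley--Hamilton on $\charpol(\gamma)$. Since the defining values lie in $F_0$, the form is hermitian; $\gamma$ is self-adjoint with respect to $h$ by construction; and the Gram matrix $(a_{i+j})_{i,j}$ factors as $D \cdot C$ where $C$ has columns $\gamma^j u_1$ and $D$ has rows $u_2 \gamma^i$, so its invertibility is exactly regular semi-simplicity. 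The $n$-dimensional hermitian space $(V, h)$ then falls into one of the two isomorphism classes, the dichotomy being governed by whether the discriminant $\det(a_{i+j})$ lies in $\Nm_{F/F_0}(F^\times)$. Setting $(A, u) := (\gamma, u_1)$ inside the appropriate space produces the matching tuple. The inverse map is $F_0$-descent: given $(A, u)$ regular semi-simple, form $V_0 := F_0\langle u, Au, \ldots, A^{n-1} u \rangle$, which is $A$-stable since $\charpol(A) \in F_0[T]$, and take $\gamma := A|_{V_0}$, $u_1 := u$, $u_2 := h_V(-, u)|_{V_0}$ (values in $F_0$ by self-adjointness). The two constructions preserve the common invariants $(\charpol, (a_i)_{i<n})$ and are therefore mutually inverse at the level of orbits.

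The main technical obstacle is establishing the split/non-split dichotomy on the nose: one must verify that the class of $\det(a_{i+j})$ in $F_0^\times / \Nm_{F/F_0}(F^\times)$, as one varies over regular semi-simple GL orbits with fixed $\charpol(\gamma)$, realizes \emph{both} cosets and thereby partitions the GL orbits into the two unitary families without overlap. Secondarily, one must confirm that the cyclic-vector plus self-adjointness normalizations really do give a complete invariant for $H_V$- (resp.\ $H_\BV$-)orbits; this reduces to observing that the centralizer in $H$ of a cyclic pair $(A, u)$ is trivial, so that normalizing $u$ and putting $A$ in companion form leaves no residual freedom.
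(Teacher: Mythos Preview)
Your argument is correct and is precisely the standard invariant-theoretic proof; the paper's own proof is a one-line deferral to the Jacquet--Rallis case via \cite[Section 4.1]{leslie2019endoscopic}, where this same construction (parametrize regular semisimple orbits by $\charpol$ together with the moment sequence, then rebuild the hermitian form from those moments) is carried out. One minor remark: the ``main technical obstacle'' you flag --- that both norm cosets of $\det(a_{i+j})$ are realized and that there is no overlap --- is not actually an obstacle, since your inverse construction applied separately on the $V$-side and the $\BV$-side already forces surjectivity onto each piece, and the forward map is a function, so disjointness is automatic.
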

\begin{proof}
This follows from the same proof in the Jacquet-Rallis case. See \cite[Section 4.1]{leslie2019endoscopic}.
\end{proof}

\subsection{Transfers and Twisted Fundamental lemmas}

Choose a self-dual lattice $L$ in $V$. Choose an orthogonal basis $\{e_i\}$ of $L$, and let $L_0= \sum_{i} O_{F_0}e_i$. We may assume that $V_0=L_0 \otimes \BQ$. We consider the lattice $L'=L_0 \times L_0^* \subseteq Y'=V_0'$. We normalize the Haar measure on $\U(V)(F_0)$ (resp. $\GL(V_0)(F_0)$) such that the stabilizer $\U(L)$ (resp. $\GL(L_0)$) is of volume $1$.

\begin{definition}
We say $\Phi' \in \CS(X' \times Y')$ and a pair $(\Phi_V \in \CS(X_V \times Y_V)), \Phi_{\BV} \in \CS(X_\BV \times Y_\BV)$ are \emph{transfers} to each other, if for any regular semisimple pair $(\gamma, u') \in X' \times Y'$, we have 
\[
\Orb((\gamma,u'), \Phi')= \begin{cases}
   \Orb((g,u), \Phi_V), \quad \text{if $(\gamma,u')$ matches $(g,u) \in (X_V \times Y_V)_\rs$.}\\
   \Orb((g,u), \Phi_\BV), \quad \text{if $(\gamma,u')$ matches $(g,u) \in (X_\BV \times Y_\BV)_\rs$.}  \\
\end{cases}
\]
\end{definition}

\begin{remark}
Let $E_0/F_0$ be a \'etale quadratic algebra. Let $E=F \otimes_{F_0} E_0$. Let $F'/F_0$ be the third quadratic extension inside $E$. We can consider the following generalization of the above orbits ($E_0=F$) and the Jacquet--Rallis case ($E_0=F_0 \times F_0$). Let  
\[
H_V=\U(V), \quad G_{V}^{E_0}=\U(V_{E_0}).
\]
Instead of $X_V$, consider the hermitian symmetric space 
\[
X_{V}^{E_0}=\U(V_{E_0})^{-}:=\{ g \in \U(V_{E_0}) | g \sigma_{E/F}(g)=\id. \}
\] 
Let 
\[
H'=\GL(V_0), \quad G^{'E_0} = \GL(V_0,F'), \quad H'_{20} = \GL(V_0).  
\]
Instead of $X'$, consider the $F'/F_0$-symmetric space over $F_0$:
\[
X^{'E_0}=S_{n,F'/F_0}=\{ \gamma \in \GL_n(F') | \gamma \sigma_{F'/F_0} (\gamma) =\id \}.
\]
By Hilbert 90, we have $\U(V_{E_0})^{-} \cong \U(V_{E_0})/\U(V), \, \, S_{n,F'/F_0}=\GL_n(F')/\GL_n(F_0).$ Note that the Lie algebra of $X_{V, E_0}$ and $X'_{E_0}$ can be identified after multiplying suitable elements in $F'$. Then we have a similar matching of regular semi-simple orbits
\[
[H' \backslash (X^{'E_0} \times Y')]_\rs \cong [H_V \backslash (X_V^{E_0} \times Y_V)]_\rs \coprod [H_{\BV} \backslash (X_\BV^{E_0} \times Y_\BV)]_\rs, \, \, (\gamma,u') \leftrightarrow (A, u).
\]
similar to \cite[Section 4.1]{leslie2019endoscopic}.
\end{remark}

Let $\GL(L_0)$ be the stabilizer of $L_0$ in $\GL(V_0)$. Let $\Herm(L)$ be the subset of $A \in X_V=\Herm(V, h_V)$ such that $L$ is integral under $A$. Define standard test functions
\begin{equation} \label{standard test function}
\Phi_{L'}'=1_{\GL(L_0)} \times 1_{L'} \in C_c^\infty(X' \times Y'), \, \Phi_L= 1_{\Herm(L)} \times 1_{L} \in C_c^\infty(X_V \times Y_V)
\end{equation}

We have the following twisted fundamental lemma which appears in the work of Danielle Wang \cite{Wang-TGGP}.

\begin{theorem}[Twisted Fundamental Lemmas] \label{Twisted FL}
$\Phi'_{L'}$ is a transfer of $(\Phi_L, 0)$.
\end{theorem}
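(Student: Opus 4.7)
My plan is to prove this identity by (i) unwinding the orbital integrals into lattice counts, (ii) reducing via a relative Cayley-type transform to an infinitesimal (Lie-algebra) statement, and (iii) invoking the semi-Lie Jacquet--Rallis fundamental lemma of \cite{AFL-Wei2019} (or, alternatively, the direct argument of \cite{Wang-TGGP}).

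After the substitutions $M = h L_0$ on the general linear side and $\Lambda = h L$ on the unitary side, the orbital integral on the GL side rewrites as
$$
\Orb\bigl((\gamma,u'), \Phi'_{L'}, s\bigr) = \omega(\gamma,u') \sum_{M} |M : L_0|^{s}\, \eta(M : L_0),
$$
where $M$ ranges over $O_{F_0}$-lattices in $V_0$ satisfying $\gamma M = M$, $u_1 \in M$, and $u_2 \in M^{\vee}$; at $s = 0$ this is a signed lattice count. On the unitary side, $\Orb((g,u), \Phi_L)$ becomes the number of self-dual $O_F$-lattices $\Lambda \subseteq V$ with $g \Lambda \subseteq \Lambda^\vee$ and $u \in \Lambda$. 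The transfer identity is thus recast as an explicit lattice-counting identity, and the vanishing on non-split matching orbits is built into the transfer statement since we are transferring only against $\Phi_L$ on the split hermitian space.

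I would then apply a relative Cayley transform, following the construction sketched in Section \ref{section: arithmetic induction}, to identify neighborhoods of regular elements in the symmetric spaces $\Herm(V) = \GL(V)/\U(V)$ and $\GL(V_0)$ (under conjugation) with corresponding open subsets of their tangent Lie algebras. The Cayley map preserves regular semi-simple orbits and matches characteristic polynomials, the inner products $(g^iu,u) = u_2(\gamma^i u_1)$, and the transfer factor $\omega$. It therefore converts the semi-Lie twisted FL into an infinitesimal statement about $\mathfrak{gl}(V_0) \times V_0 \times V_0^*$ versus a Hermitian Lie-algebra analog together with $V$.

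After the base-change identification $V = V_0 \otimes_{F_0} F$, this infinitesimal identity is a semi-Lie analog of the Jacquet--Rallis fundamental lemma in the unramified setting, and the vanishing on orbits matching the non-split hermitian space $\BV$ follows from the standard parity / $\eta$-cancellation argument used in \cite{AFL-Wei2019}. The main obstacle is step (ii): the twisted source space is the symmetric space $\Herm(V)$ rather than the group $\U(V)$, and one must set up the relative Cayley transform so that the standard indicators $1_{\GL(L_0)}$ and $1_{\Herm(L)}$ correspond to the standard Lie-algebra indicators, with the Haar-measure normalizations $\vol(\GL(L_0)) = \vol(\U(L)) = 1$ matching without introducing spurious constants. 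Once this compatibility is established, the theorem reduces to a known case; alternatively, one may appeal directly to the proof in \cite{Wang-TGGP}.
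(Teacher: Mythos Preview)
Your approach is correct and essentially the same as the paper's: reduce via a Cayley-type map to the Lie-algebra (infinitesimal) statement, then identify that statement with the semi-Lie Jacquet--Rallis fundamental lemma. The paper's own proof is two lines and cites \cite{beuzart2021new} rather than \cite{AFL-Wei2019}, but the content is identical; your step (i) lattice-count reformulation is valid but unnecessary for the reduction.
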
	
\begin{proof}
Using  the Cayley map, we reduce to similar transfer results on lie algebra case, which is equivalent to the Jacquet--Rallis fundamental lemma proved in \cite{beuzart2021new}.
\end{proof}

\section{Twisted arithmetic fundamental lemmas}\label{section: TAFL}

In this section, we introduce twisted derived fixed cycles and formulate a twisted arithmetic fundamental lemma. Recall we have a $\U(\BV)$-equivariant embedding 
\[
\CN_n \to \CN_n^\GL. 
\]
\begin{definition}\label{twisted fixed cycles}
For $g \in \GL(\BV)(F_0)$, the twisted fixed cycle of $g$ is the derived special cycle (in the Grothendieck group of $\CN_n$) which only depends on $g \in X_\BV=\GL(\BV)/\U(\BV)$:
\begin{equation}
\CN_n^{\Herm}(g)= g \CN_n \cap^{\BL}_{\CN^\GL_n} \CN_n \in K_0'(\CN_n),
\end{equation}    
\end{definition}
which is a derived $1$-cycle by \cite[Lemma B.2]{AFL-Wei2019}. We identify $X_\BV=\GL(\BV)/\U(\BV)=\Herm(\BV, h_\BV)$ as the set of hermitian structures $A$ on $\BV$ such that $(\BV, A) \cong (\BV, h_\BV)$. 

\begin{proposition}\label{int proper scheme}
For a regular semi-simple pair $(g, u) \in \GL(\BV)/\U(\BV) \times \BV$, the intersection $g \CN_n \cap_{\CN^\GL_n} \CZ(u)$ is a proper scheme.
\end{proposition}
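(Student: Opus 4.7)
The plan is to show the intersection is set-theoretically $0$-dimensional and of finite length, so that it becomes a finite (hence proper) scheme over $\Spf O_{\breve F_0}$. As a first step I would use that $\CN_n \hookrightarrow \CN_n^\GL$ is a closed immersion (as the fixed locus of the Galois involution $\sigma$) to rewrite
\[
g \CN_n \cap_{\CN_n^\GL} \CZ(u) \;=\; \bigl(g\CN_n \cap_{\CN_n^\GL} \CN_n\bigr) \cap_{\CN_n} \CZ(u),
\]
realizing it as a closed formal subscheme of $\CN_n$, namely $\CN_n^{\Herm}(g) \cap_{\CN_n} \CZ(u)$. Since $\CN_n$ is formally locally of finite type and both factors are closed formal subschemes, the intersection is locally of finite type.

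Next I would pass to the $\BF$-point description via Dieudonn\'e theory. A closed point of $g\CN_n \cap_{\CN_n^\GL} \CN_n$ corresponds to an $O_F$-lattice $\Lambda$ in the rational Dieudonn\'e module $N$ of $\BX$ that is self-dual both with respect to $\lambda_\BX$ (putting the associated tuple in $\CN_n$) and with respect to the twisted polarization $(g^{-1})^\vee \lambda_\BX g^{-1}$ (putting it in $g\CN_n$). These two conditions combine into the requirement that $\Lambda$ be stable under the quasi-automorphism $\sigma(g) g^{-1}$ of $N$, a concrete constraint depending only on the class of $g$ in $X_\BV$. The Kudla--Rapoport condition additionally forces the image of the Dieudonn\'e lattice of $\CE$ under $u : \BE \to \BX$ to lie in $\Lambda$.

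The key boundedness step uses regular semi-simplicity of $(g, u)$: writing $A_g$ for the hermitian structure on $\BV$ corresponding to $g$, the vectors $u, A_g u, \ldots, A_g^{n-1} u$ are linearly independent in $\BV$. Setting $\Lambda_0 := \sum_{i=0}^{n-1} O_F \cdot A_g^i u$, the combination of the simultaneous self-duality of $\Lambda$ with the $\CZ(u)$ condition (propagated by $\sigma(g)g^{-1}$-stability) forces $\Lambda_0 \subseteq \Lambda \subseteq \Lambda_0^\vee$. Only finitely many $O_F$-lattices lie between $\Lambda_0$ and $\Lambda_0^\vee$, so the set of $\BF$-points is finite. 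For the infinitesimal part I would compute deformations via Grothendieck--Messing theory: lifting a point requires simultaneously lifting two principal polarizations and the homomorphism $u$, and a routine count shows the derived intersection has expected relative dimension $0$; alternatively one can invoke \cite[Lem.~B.2]{AFL-Wei2019} directly.

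The main obstacle will be the explicit lattice bookkeeping in this \emph{twisted} setting: since $g$ represents a class in $\GL(\BV)/\U(\BV)$ rather than in $\U(\BV)$, the fixed-lattice analysis is governed by the nontrivial operator $\sigma(g)g^{-1}$, and one must verify that the orbit invariants controlling regular semi-simplicity---the characteristic polynomial of $A_g$ together with the pairings $(A_g^i u, u)$ for $0 \le i \le n-1$---genuinely pin down $\Lambda_0 \subseteq \Lambda \subseteq \Lambda_0^\vee$. This is the twisted analogue of the finiteness of Kudla--Rapoport intersections with translates of $\CN_n$ in the semi-Lie AFL of \cite{AFL-Wei2019}.
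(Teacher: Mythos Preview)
Your approach is correct in spirit and shares the paper's key idea---propagating $u$ by the lifted automorphism coming from membership in $g\CN_n \cap \CN_n$ to land in a Kudla--Rapoport cycle attached to a full-rank lattice---but you take a longer route than necessary. You pass to $\BF$-points via Dieudonn\'e theory, bound the set of admissible lattices between $\Lambda_0$ and $\Lambda_0^\vee$, and then treat deformations separately (where your appeal to \cite[Lem.~B.2]{AFL-Wei2019} does not actually give properness, only the expected codimension; that step would need more). The paper avoids this two-stage analysis entirely: working directly with $S$-points of formal schemes, it observes that on $g\CN_n \cap_{\CN_n^\GL} \CN_n$ the two principal polarizations combine to give an honest $O_F$-linear endomorphism $\varphi: X \to X$ lifting the relevant element of $\End(\BV)$, so composing with the universal lift of $u$ shows the intersection is a \emph{closed formal subscheme} of $\CZ(L_{g,u})$, where $L_{g,u}$ is the $O_F$-span of $u, \varphi u, \ldots, \varphi^{n-1} u$. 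Regular semi-simplicity makes $L_{g,u}$ of full rank $n$, and then one simply cites \cite[Lemma~2.10.1]{LiZhangKR2019} that $\CZ(L)$ is a proper scheme for full-rank $L$. No separate point-count or Grothendieck--Messing computation is needed. Your lattice $\Lambda_0$ is exactly the paper's $L_{g,u}$, so the content is the same; the paper just packages it as a closed-immersion-into-a-proper-scheme argument rather than a finiteness-plus-deformation argument.
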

\begin{proof}
Let $(X, \iota, \lambda) \in (g \CN_n \cap_{\CN^\GL_n} \CN_n)(S)$. Then there exists an $O_{F}$-linear morphism $\varphi: X \to X$ lifting $g$. If $(X, \iota, \lambda) \in  g \CN_n \cap_{\CN^\GL_n} \CZ(u)$, using the lifting $\varphi$ we see that $(X, \iota, \lambda) \in \CZ(L_{g,u})$ where $L_{g,u}$ is the $O_F$-span of $u, gu, \hdots, g^{n-1} u$. As $(g,u)$ is regular semi-simple, $L_{g,u}$ has full rank $n$ hence $\CZ(L_{g,u})$ is a proper scheme by \cite[Lemma 2.10.1]{LiZhangKR2019}. As $g\CN_n \cap_{\CN^\GL_n} \CZ(u)$ is a a closed formal subscheme of $\CZ(L_{g,u})$, it is also a proper scheme.
\end{proof}

For any regular semi-simple pair $(g,u) \in (X_\BV \times \BV)(F_0)_\rs$, consider the derived intersection number
\begin{equation} \label{defn: intersection number}
\Int^{\Herm, \BV}(g, u) = \CN_n^{\Herm}(g) \cap^\BL_{\CN_n} \CZ(u) = g\CN_n \cap^{\BL}_{\CN^\GL_n} \CZ(u) = \chi(\CN_n^\GL,  \CO_{\CN_n} \otimes^{\BL} \CO_{\CZ(u)} ).
\end{equation}

By Proposition \ref{int proper scheme}, $\Int(g, u) \in \BZ$ is well-defined and finite. 
\begin{proposition}\label{prop: IntHerm=mirabolic int}
We have 
\[
\Int^{\Herm, \BV}(g, u) = \CN_n \cap^\BL_{\CN_n^\GL} g \CN_n \cap^\BL_{\CN^\GL_n} \CZ^\GL(u).
\]
\end{proposition}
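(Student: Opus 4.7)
The plan is to identify the Kudla--Rapoport divisor $\CZ(u)$ on $\CN_n$ with the derived intersection $\CN_n \cap^{\BL}_{\CN_n^\GL} \CZ^\GL(u)$ on $\CN_n^\GL$, and then invoke associativity of the derived tensor product to split off the factor $\CN_n$ from $\CZ(u)$ inside the triple intersection.

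The key input is a derived enhancement of the pullback formula in Theorem \ref{prop: geometry of mirabolic cycle}(2). By Theorem \ref{prop: geometry of mirabolic cycle}(1), $\CZ^\GL(u)$ is a relative Cartier divisor in $\CN_n^\GL$, so locally it is cut out by a single regular function $f$, and $\CO_{\CZ^\GL(u)}$ is resolved by the two-term Koszul complex $[\CO_{\CN_n^\GL} \xrightarrow{f} \CO_{\CN_n^\GL}]$. Tensoring this resolution over $\CO_{\CN_n^\GL}$ with $\CO_{\CN_n}$ gives $[\CO_{\CN_n} \xrightarrow{f|_{\CN_n}} \CO_{\CN_n}]$, which represents $\CO_{\CN_n} \otimes^\BL_{\CO_{\CN_n^\GL}} \CO_{\CZ^\GL(u)}$. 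By Theorem \ref{prop: geometry of mirabolic cycle}(2), the $H^0$ of this complex is $\CO_{\CZ(u)}$, which is itself a relative Cartier divisor on $\CN_n$ by \cite[Prop 3.5]{KR2011local}; hence $f|_{\CN_n}$ is a non-zero-divisor, the complex is concentrated in degree zero, and we obtain the derived pullback identity
\[
\CO_{\CN_n} \otimes^\BL_{\CO_{\CN_n^\GL}} \CO_{\CZ^\GL(u)} \simeq \CO_{\CZ(u)}.
\]

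With this identification in hand, associativity of the derived tensor product over $\CO_{\CN_n^\GL}$ yields
\[
\CO_{g\CN_n} \otimes^\BL_{\CO_{\CN_n^\GL}} \CO_{\CZ(u)} \simeq \CO_{g\CN_n} \otimes^\BL_{\CO_{\CN_n^\GL}} \CO_{\CN_n} \otimes^\BL_{\CO_{\CN_n^\GL}} \CO_{\CZ^\GL(u)}.
\]
Applying $\chi(\CN_n^\GL, -)$ to both sides and using the definition (\ref{defn: intersection number}) on the left gives the desired equality. The Euler characteristics are finite because by Proposition \ref{int proper scheme} the set-theoretic support is contained in the proper scheme $g\CN_n \cap \CZ(u)$. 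The only real technical point is the verification that $f|_{\CN_n}$ is a non-zero-divisor, which we reduce to the known Cartier divisor structure of $\CZ(u)$ on $\CN_n$; everything else is formal manipulation of derived tensor products.
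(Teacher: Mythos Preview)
Your proof is correct and follows essentially the same approach as the paper. The paper establishes the derived pullback identity $\CZ(u)=\CZ^\GL(u)\cap^\BL_{\CN_n^\GL}\CN_n$ by citing \cite[Lemma B.2]{AFL-Wei2019} (correct codimension implies derived equals underived), whereas you unpack this explicitly via the two-term Koszul resolution of the Cartier divisor $\CZ^\GL(u)$ and the non-zero-divisor property of $f|_{\CN_n}$; the remaining associativity step is identical.
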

\begin{proof}
As $\CZ^\GL(u), \CZ(u)$ are Cartier divisors, the intersection $\CZ(u)=\CZ^\GL(u) \cap_{\CN^\GL_n} \CN_n$ has correct codimension hence is equal to $\CZ^\GL(u) \cap_{\CN^\GL_n}^\BL \CN_n$ by \cite[Lemma B.2]{AFL-Wei2019}. The result follows.
\end{proof}

Now we formulate our twisted arithmetic fundamental lemma.

\begin{conjecture}[Twisted arithmetic fundamental lemma] \label{Twisted AFL conjecture}
For any $(g,u) \in (\GL(\BV)/\U(\BV) \times \BV)(F_0)_\rs$ matching $(\gamma,u') \in (\GL_n \times V_0')(F_0)_\rs$, we have
\[			
 \partial \Orb((\gamma,u'),  \Phi'_{L'}) =- \Int^{\Herm, \BV}(g,u) \log q.
\]
\end{conjecture}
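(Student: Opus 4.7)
The plan is to follow the induction-plus-globalization strategy outlined in the introduction, combining the arithmetic induction system from Section \ref{section: arithmetic induction} with a global Weil-type relative trace formula argument using the mirabolic special cycles of Theorem \ref{prop: geometry of mirabolic cycle}. First, by the transfer statement of the twisted fundamental lemma (Theorem \ref{Twisted FL}), the analytic side $\partial \Orb((\gamma,u'), \Phi'_{L'})$ is a well-defined invariant depending only on the matched orbit $(g,u)$. The right-hand side $\Int^{\Herm,\BV}(g,u)$ is finite by Proposition \ref{int proper scheme}, and by Proposition \ref{prop: IntHerm=mirabolic int} it equals the triple derived intersection $\CN_n \cap^{\BL} g\CN_n \cap^{\BL} \CZ^{\GL}(u)$ on $\CN_n^{\GL}$, which is the form most accessible to the mirabolic-cycle techniques.

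Second, I would reduce to a base case via arithmetic induction. Using the relative Cayley maps introduced in Section \ref{section: arithmetic induction} and the theorem that both sides are arithmetic induction systems (Propositions \ref{geometric side: arithmetic induction} and \ref{analytic side: arithmetic induction}), the difference $\partial \Orb - (-\Int^{\Herm,\BV})\log q$ satisfies an induction relation that reduces the general regular semisimple case to a class of ``minuscule'' or ``boundary'' orbits where one factor in $(g,u)$ can be taken to lie in a codimension-one stratum. The functoriality and pullback properties in Theorem \ref{prop: geometry of mirabolic cycle} (parts (2), (6), (7)) are precisely what make the geometric side behave well under this induction: intersections with $\CZ^{\GL}(u)$ decompose along direct-sum decompositions of the ambient representation, and the pullback formula allows passage between $\CN_n$ and $\CN_n^{\GL}$.

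Third, for the base cases that cannot be eliminated by induction, I would globalize. Choose a CM extension $F/F_0$ of a totally real field which $p$-adically realizes the given local data, and consider the global unitary Shimura variety $\mathrm{Sh}_G$ together with global Kudla--Rapoport divisors and twisted CM cycles as in Sections \ref{section: global special cycles} and \ref{section: local-global}. Form the adelic analytic generating series using $\partial \Orb$ weighted by Whittaker functions (\ref{defn: Whittaker SL2}), and the geometric generating series built from twisted fixed cycles intersected with global Kudla--Rapoport divisors. By the modularity of geometric theta series for unitary Shimura varieties with hyperspecial level (\cite{BHKRY}, \cite{AFL-JEMS}) and the corresponding modularity of the analytic generating series, their adelic difference lies in the finite-dimensional space of holomorphic automorphic forms $\CA_{\rm hol}(\SL_2(\BA_{F_0}), K_0, n)_{\ov{\BQ}}$ of parallel weight $n$. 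The local--global decomposition formulas at non-singular Fourier coefficients (proved via basic uniformization) express each non-singular Fourier coefficient as a product of local terms, one of which is precisely the difference $\partial \Orb + \Int^{\Herm,\BV}\log q$ whose vanishing we want.

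Finally, I would combine induction and globalization: at all places and orbits already handled by arithmetic induction, the Fourier coefficient of the difference vanishes. This forces infinitely many Fourier coefficients of a fixed holomorphic modular form to vanish along a suitable family; by modularity (and a density argument on matching orbits) the difference is a constant, and tracking a single known vanishing coefficient or letting a parameter tend to infinity forces that constant to be zero. Consequently the local identity of Conjecture \ref{Twisted AFL conjecture} holds for the remaining base orbits as well, completing the proof. The main obstacle I expect is the local--global decomposition for non-singular Fourier coefficients: the geometric side involves the triple intersection of $\CN_n$, its $g$-translate, and a Kudla--Rapoport divisor inside $\CN_n^{\GL}$, and separating the local arithmetic intersection number from contributions of global Hecke translates and twisted CM zero-cycles requires a careful basic uniformization of the global integral model along the supersingular locus together with controlling derivatives of orbital integrals through the $s$-variable, in a setting where the diagonal is the twisted diagonal $\U(V) \to \GL(V)$ rather than the more familiar $\U(V) \to \U(V)\times\U(V)$.
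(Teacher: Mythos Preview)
Your overall strategy---induction on $n$ via the relative Cayley maps (Propositions \ref{geometric side: arithmetic induction} and \ref{analytic side: arithmetic induction}), globalization over a totally real field, local--global decomposition via basic uniformization (Propositions \ref{local-global decomp analytic} and \ref{prop: global int=local int}), and modularity to propagate vanishing of Fourier coefficients---is exactly the paper's. But there is one genuine gap in the modularity step.

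You assert that the geometric generating series is automorphic by citing \cite{BHKRY} and \cite{AFL-JEMS}. The series $h \mapsto \Int(h, \Phi)$ that actually appears is an \emph{arithmetic} intersection pairing on the integral model $\CM$, and the paper explicitly notes that ``the modularity of arithmetic theta series is currently unknown for general $F_0 \neq \BQ$.'' What is known (Proposition \ref{modularity over generic fiber}) is modularity in $\Ch^1(M)$ over the generic fiber. The paper closes this gap with the maximal-order subtraction trick from \cite[Section 10.6]{AFL-JEMS}: one chooses a second invariant $\alpha_m$ whose order is maximal at all inert places outside $\Delta$, so that the twisted AFL is already known there by Proposition \ref{TAFL: maximal order}, and works with $\mathbf{Diff}^\circ = \mathbf{Diff}_\alpha - \mathbf{Diff}_{\alpha_m}$. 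For this difference, geometric modularity suffices to conclude automorphy. Without this subtraction your modularity claim is unjustified and the argument stalls. A secondary omission is the archimedean correction $\Int^{\mathbf{K}-\mathbf{B}}_\infty(\xi,\Phi)$ coming from the discrepancy between Kudla and automorphic Green functions: the analytic side naturally produces Kudla Green functions (Proposition \ref{local-global decomp geometric}), while holomorphic automorphy requires the admissible ones, and this difference must be added back in and shown (via \cite[Proposition 10.5]{AFL-JEMS}) to be itself automorphic.
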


\begin{remark}
The original arithmetic fundamental lemma \cite{AFL-Invent} study similar arithmetic intersection problems via the embedding of $\CN_n \to \CN_n \times \CN_n$ and graphs of automorphisms on $\CN_n \times \CN_n$. In above conjecture, we use the embedding $\CN_n \to \CN_n^\GL$, hence we call it a twisted arithmetic fundamental lemma.
\end{remark}

\begin{proposition}
The twisted AFL conjecture \label{Twisted AFL conjecture} holds when $n=1$.
\end{proposition}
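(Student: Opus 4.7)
The plan is to verify the identity by direct computation on both sides, exploiting the near-triviality of the $n = 1$ geometry.

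On the analytic side, since $H' = \GL_1 = \BG_m$ is abelian the conjugation action is trivial. Writing $a = v_{F_0}(u_1)$ and $b = v_{F_0}(u_2)$, the orbital integral collapses to
\begin{equation*}
\Orb((\gamma, u'), \Phi'_{L'}, s) = (-1)^a \, \mathbf{1}_{O_{F_0}^\times}(\gamma) \sum_{k = -b}^{a} (-q^{-s})^k.
\end{equation*}
Matching to the non-split $\BV$ forces $a + b$ odd, in which case the $s = 0$ value vanishes and a direct sum manipulation gives $\partial \Orb((\gamma, u'), \Phi'_{L'}) = -\tfrac{v_{F_0}((u,u))+1}{2} \log q$ when $\gamma \in O_{F_0}^\times$, and $0$ otherwise. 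The target identity thus reduces to the claim that $\Int^{\Herm, \BV}(g, u) = \tfrac{v_{F_0}((u,u))+1}{2}$ for $g \in O_{F_0}^\times$, and $= 0$ otherwise.

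On the geometric side, I would exploit that both $\CN_1$ and $\CN_1^\GL$ are isomorphic to $\Spf O_{\breve F_0}$: the signature $(0, 1)$ basic tuple is a Lubin-Tate $O_F$-module of $F$-height $1$, whose principal polarization is pinned down by pulling back $\lambda_\BX$ along the framing, so the forgetful map $\CN_1 \to \CN_1^\GL$ is an isomorphism. For $g \in \GL(\BV)/\U(\BV)$ whose $F^\times$-representative has nonzero $F$-valuation (equivalently $g \notin O_{F_0}^\times = \Nm(O_F^\times)$), the translate $g \CN_1$ lives in a different height stratum of the unrestricted general linear formal scheme and is disjoint from $\CN_1$, giving $\Int^{\Herm, \BV}(g, u) = 0$. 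For $g \in O_{F_0}^\times$, a height-zero representative gives $g \CN_1 = \CN_1$ inside $\CN_1^\GL$, so the triple derived intersection from Proposition~\ref{prop: IntHerm=mirabolic int} collapses to $\mathrm{length}(\CZ(u))$. I would then evaluate this length via a direct Dieudonn\'e computation: writing $u = \Pi x$ with $\Pi \in \End^\circ(\BE)$ a uniformizer conjugating the two $F$-actions on the supersingular Lubin-Tate module, the Grothendieck--Messing obstruction to lifting $u\colon \BE \to \BX$ to a homomorphism $\CE \to \CX$ between canonical lifts produces a single equation in the deformation ring $O_{\breve F_0}$, whose valuation equals $v_F(x) + 1 = \tfrac{v_{F_0}((u,u))+1}{2}$.

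The main obstacle is precisely this Dieudonn\'e-theoretic length computation for $\CZ(u)$: one needs the signature $(0, 1)$ Kottwitz condition (which forces the Hodge filtration to equal the full Lie algebra) and the explicit description of $\End^\circ(\BE)$ as the quaternion division algebra over $F_0$ to identify the local equation cutting out $\CZ^\GL(u)$ as a uniformizer raised to exactly this power. Once the length is pinned down, combining with the analytic computation above yields the twisted AFL identity for $n = 1$.
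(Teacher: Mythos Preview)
Your proposal is correct and rests on the same key observation as the paper: for $n=1$ both $\CN_1$ and $\CN_1^{\GL}$ are $\Spf O_{\breve F_0}$, so the twisted fixed cycle $g\CN_1 \cap^{\BL}_{\CN_1^{\GL}} \CN_1$ degenerates to all of $\CN_1$ (for height-zero $g$) and the intersection number becomes $\mathrm{length}\,\CZ(u)$. The paper's proof stops there and simply notes that this is the $n=1$ case of the original (untwisted) semi-Lie AFL, which is classical via Gross's theory of quasi-canonical liftings. You instead unwind that citation: you compute the orbital integral by hand (the abelian $H'=\BG_m$ makes this a finite geometric-type sum), and you propose to redo the Dieudonn\'e-theoretic length computation $\mathrm{length}\,\CZ(u) = \tfrac{v_{F_0}((u,u))+1}{2}$ directly. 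Your route is more self-contained and makes the arithmetic visible; the paper's route is shorter because it delegates the entire computation to an existing reference. Your handling of the $\gamma \notin O_{F_0}^\times$ case via height strata is a detail the paper leaves implicit, and your ``main obstacle'' (the lifting obstruction for $u = \Pi x$) is precisely the content of Gross's formula that the paper is citing.
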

\begin{proof}
In this case, the (twisted) fixed cycles will be the whole $\CN_1$. The twisted AFL agrees with the original AFL \cite{AFL-Invent} and follows from Gross's theory of quasi-canonical lifting.
\end{proof}

\begin{proposition}\label{TAFL: maximal order}
The twisted AFL conjecture \label{Twisted AFL conjecture} holds in the maximal order case, i.e., when $O_F[g]$ ($g \in \Herm(\BV, h_\BV)$)is a product of discrete valuation rings.
\end{proposition}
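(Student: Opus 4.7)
The plan is to reduce to the base case $n=1$ established in the preceding proposition by exploiting the structure theory afforded by the maximal order hypothesis. Write $O_F[g] = \prod_{i=1}^r O_{F_i}$, where each $O_{F_i}$ is the ring of integers of an unramified extension $F_i/F$; since $(g,u)$ is regular semi-simple, $\BV = F[g]\cdot u$ as an $F[g]$-module, so this factorization induces an orthogonal decomposition $\BV = \bigoplus_{i=1}^r \BV_i$ in which each $\BV_i$ is a rank-one free $F_i$-module, $g$ preserves each summand with $g|_{\BV_i}$ generating $F_i/F$, and $u = \sum u_i$ with every $u_i \neq 0$. This decomposition is orthogonal for $h_\BV$ because the minimal polynomial of $g$ has pairwise coprime irreducible factors, so the $F_i$-isotypic components are mutually perpendicular.

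For the geometric side, start from the identity
\[
\Int^{\Herm,\BV}(g,u) = \CN_n \cap^\BL_{\CN^\GL_n} g\CN_n \cap^\BL_{\CN^\GL_n} \CZ^\GL(u)
\]
of Proposition \ref{prop: IntHerm=mirabolic int}. The orthogonal decomposition of $\BV$ produces a framing decomposition $\BX = \bigoplus_i \BX_i$ and hence an embedding $\prod_{i=1}^r \CN^\GL_{n_i} \hookrightarrow \CN^\GL_n$ (with $n_i = [F_i{:}F]$). The functoriality of mirabolic divisors (Theorem \ref{prop: geometry of mirabolic cycle}(7)) gives $\CZ^\GL(u)|_{\prod \CN^\GL_{n_i}} = \prod_i \CZ^\GL_{n_i}(u_i)$. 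By the Galois-equivariance and the equivariance of the $\GL(\BV)$-action with respect to the product structure on $\BX$, both $\CN_n$ (via $(\CN_n^\GL)^{\sigma=\id}=\CN_n$ applied factor-wise) and $g\CN_n$ also descend through this embedding. The maximal order hypothesis forces every point of the triple intersection to lie in $\prod_i \CN^\GL_{n_i}$, so the intersection multiplicities factor and
\[
\Int^{\Herm,\BV}(g,u) = \prod_{i=1}^r \Int^{\Herm,\BV_i}(g_i, u_i),
\]
and each factor, viewed as a one-dimensional problem over $F_i$ (with residue field of size $q^{n_i}$), is the $n=1$ case already handled via Gross's theory of quasi-canonical liftings.

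For the analytic side, the characteristic polynomial of $\gamma$ factors correspondingly as $\prod_i P_i$ over $F_0$, and a compatible decomposition $V_0 = \bigoplus V_{0,i}$, $L_0 = \bigoplus L_{0,i}$ is available. The test function $\Phi'_{L'} = 1_{\GL(L_0)}\times 1_{L'}$ factors as a tensor product of the analogous functions on each summand, so the matching orbital integral factors as
\[
\Orb((\gamma,u'),\Phi'_{L'},s) = \prod_{i=1}^r \Orb((\gamma_i, u'_i), \Phi'_{L'_i}, s).
\]
By the twisted fundamental lemma (Theorem \ref{Twisted FL}) applied factor-by-factor, each $\Orb((\gamma_i, u'_i), \Phi'_{L'_i}, 0)$ equals the corresponding unitary orbital integral on $(g_i, u_i)$, which matches geometrically under Proposition \ref{matching of orbits: Un to GLn}. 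Differentiating the product at $s=0$ via the Leibniz rule produces a sum over $i$ of terms, and the $n=1$ case applied to each factor converts the $i$-th derivative into $-\Int^{\Herm,\BV_i}(g_i, u_i) \log q_i$. Summing these yields exactly $-\Int^{\Herm,\BV}(g,u) \log q$ after collecting $\log q_i = n_i \log q$ against the product factorization above.

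The main obstacle is ensuring the geometric factorization of the triple derived intersection is genuinely a product of derived intersections on the factor Rapoport--Zink spaces, rather than merely set-theoretically supported there; this requires the Cartier divisor property of $\CZ^\GL$ from Theorem \ref{prop: geometry of mirabolic cycle}(1) and the cancellation law (6), together with a careful check that the embedding $\prod_i \CN^\GL_{n_i} \hookrightarrow \CN^\GL_n$ meets $\CN_n \cap^\BL g\CN_n$ transversally at the support of $\CZ^\GL(u)$ in the maximal order case. Once this compatibility is secured, the matching of individual factors with the $n=1$ case completes the proof.
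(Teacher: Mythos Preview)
Your overall strategy—factor $F[g]=\prod_i F_i$ and reduce to the $n=1$ case over each $F_i/F_{i,0}$—is exactly what the paper does (by citing \cite[Section 9]{Mihatsch-comparison} and \cite[Prop.~2.6]{AFL-Wei2019}). But there is a genuine gap in your final matching step.

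You assert that the geometric side is a \emph{product} $\prod_i \Int^{\Herm,\BV_i}(g_i,u_i)$, while the analytic derivative after Leibniz is a \emph{sum} $\sum_i (\partial\Orb_i)\prod_{j\neq i}\Orb_j(0)$. The sentence ``Summing these yields exactly $-\Int^{\Herm,\BV}(g,u)\log q$ after collecting $\log q_i=n_i\log q$ against the product factorization'' is not a valid deduction: a sum of this shape does not equal a product times $\log q$ without further input. Moreover, your geometric product is not even well-defined as written, since $\Int^{\Herm,\BV_i}$ only makes sense when $\BV_i$ is non-split as a $1$-dimensional $F_i/F_{i,0}$-hermitian space. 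What is missing is the case analysis carried out in the cited references: since $\BV$ is non-split, not all $\BV_i$ can be split; for a split factor the twisted FL gives $\Orb_j(0)$ equal to a unitary orbital integral (a lattice count), and on the geometric side the corresponding factor is likewise a lattice count rather than an intersection number. When exactly one $\BV_i$ is non-split, the Leibniz sum collapses to a single surviving term (all others contain the vanishing factor $\Orb_i(0)=0$), and both sides match via the $n=1$ AFL over $F_i/F_{i,0}$ together with the FL for the remaining factors. When more than one $\BV_i$ is non-split, both sides vanish. This dichotomy is the actual content of the reduction and cannot be replaced by formal manipulation of products and sums.

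A minor point: you write that each $F_i/F$ is unramified, but the maximal-order hypothesis does not force this, and in any case the relevant quadratic extension for the $n=1$ reduction is $F_i/F_{i,0}$ (which must be unramified for the twisted AFL to apply), not $F_i/F$.
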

\begin{proof}
We follow the proof in the AFL case \cite[Section 9]{Mihatsch-comparison}\cite[Prop 2.6]{AFL-Wei2019}. Write $F[g]=\prod_{i=1}^m F_i$ as a product of fields. Write $E_0=F_0$. Then above identities is reduced to the twisted AFL in the case $n=1$ for $F_i/F_{i,0}$ (and $E_{i,0}= F_{i,0}$). 
\end{proof}

In the rest of the paper, we will prove the following main theorem.

\begin{theorem}[Twisted AFL]\label{Twisted AFL theorem}
The twisted AFL conjecture \label{Twisted AFL conjecture} holds for any regular semi-simple $(g,u)$ and $n \geq 1$.
\end{theorem}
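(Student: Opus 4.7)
The plan is to adapt the global strategy of \cite{AFL-Wei2019} to our twisted setting, where the geometric side is built on the twisted diagonal embedding $\CN_n \to \CN_n^\GL$ rather than the semi-Lie diagonal $\CN_n \to \CN_n \times \CN_n$. The base case $n = 1$ and the maximal order case (Proposition \ref{TAFL: maximal order}) are already in hand; they will serve as induction seeds and as the coefficient that pins down a constant at the very end.

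First I would exploit the \emph{arithmetic induction system} structure on both sides, as promised in Proposition \ref{geometric side: arithmetic induction} and Proposition \ref{analytic side: arithmetic induction}. The key geometric input is the product formula
\[
\Int^{\Herm, \BV}(g,u) = \CN_n \cap^\BL_{\CN_n^\GL} g\CN_n \cap^\BL_{\CN_n^\GL} \CZ^\GL(u)
\]
from Proposition \ref{prop: IntHerm=mirabolic int}, combined with the direct-sum functoriality $\CZ^\GL_{n+m}(u_1,u_2)|_{\CN_n^\GL \times \CN_m^\GL} = \CZ^\GL_n(u_1) \times \CZ^\GL_m(u_2)$ and the Galois descent $(\CN_n^\GL)^{\sigma=\id}=\CN_n$ from Theorem \ref{prop: geometry of mirabolic cycle}. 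These three ingredients let the twisted arithmetic intersection be cut along any $(g,u)$-invariant flag into triple intersections of the same shape on lower-rank general linear Rapoport--Zink spaces. A parallel decomposition of $\partial\Orb((\gamma,u'),\Phi'_{L'})$ via a relative Cayley map on hermitian structures reduces the full conjecture to checking the identity on the \emph{atomic} orbits, i.e.\ those without nontrivial invariant subspaces.

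Next I would globalize. Realize the given local datum as the $p$-component of a global datum over a CM extension $F/F_0$ of a totally real field unramified at $p$, and assemble both sides into generating series on $\SL_{2,F_0}$ of parallel weight $n$. On the analytic side this is a Weil-type relative trace distribution $I(f,\phi)$ as described in the introduction; on the geometric side it is the arithmetic pairing $I^1(f,\phi)=(Z_V(\phi), R(f)_*\Sh_H)_{\Sh_G}$ built from global Kudla--Rapoport divisors and twisted complex multiplication cycles on the unitary Shimura variety (Section \ref{section: global special cycles}). The key analytic input is the modularity of the geometric theta series at hyperspecial level \cite{BHKRY}\cite{AFL-JEMS}, which implies that the difference $I(f,\phi)-I^1(f,\phi)$ lies in $\CA_{\rm hol}(\SL_2(\BA_{F_0}), K_0, n)_{\ov\BQ}$.

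The final step uses the local-global decomposition of non-singular Fourier coefficients proved in Section \ref{section: local-global} via basic uniformization: each such coefficient factors as a product of local differences $\partial\Orb + \Int^{\Herm}\log q$. The twisted fundamental lemma (Theorem \ref{Twisted FL}) at almost all places and the arithmetic induction structure at the remaining auxiliary places then force all non-singular Fourier coefficients of $I - I^1$ at suitably chosen test functions to vanish, except possibly the one attached to the chosen atomic orbit. Modularity of the difference collapses a generic automorphic form with only one non-singular Fourier coefficient to zero (after subtracting a constant pinned down by the $n=1$ case), so that last coefficient must also vanish --- giving the twisted AFL identity. The main obstacle I anticipate is the local-global decomposition at the test place: one must show that the formal completion of the global arithmetic intersection along the basic Newton stratum faithfully recovers the derived triple intersection $\CN_n \cap^\BL_{\CN_n^\GL} g\CN_n \cap^\BL_{\CN_n^\GL} \CZ^\GL(u)$. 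This is subtle because the global twisted special cycle is not itself a Shimura subvariety but rather a $\sigma$-fixed locus inside an integral model of a $\GL$-type unitary Shimura variety, so the usual Grothendieck--Messing deformation arguments need to be refined in the presence of both the mirabolic divisor $\CZ^\GL(u)$ and the Galois involution $\sigma$ simultaneously.
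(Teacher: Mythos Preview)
Your overall architecture matches the paper's, but there is one genuine gap and two mis-readings that would block the argument.

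\textbf{The modularity gap.} You write that ``the modularity of the geometric theta series at hyperspecial level \cite{BHKRY}\cite{AFL-JEMS} \ldots\ implies that the difference $I(f,\phi)-I^1(f,\phi)$ lies in $\CA_{\rm hol}$.'' The paper is explicit that this is \emph{not} available: for $F_0\neq\BQ$ the modularity of the \emph{arithmetic} generating series $\Int(h,\Phi)$ (valued in $\wh{\Ch}^1$) is unknown. The actual device, following \cite[Section 10.6]{AFL-JEMS}, is to choose an auxiliary characteristic polynomial $\alpha_m$ of maximal order away from $\Delta$ and form
\[
\textbf{Diff}^\circ(h)=\textbf{Diff}_\alpha(h)-\textbf{Diff}_{\alpha_m}(h),\qquad \textbf{Diff}(h)=2\partial\BJ(h,\Phi')+\Int(h,\Phi)+\Int^{\mathbf{K-B}}_\infty(h,\Phi).
\]
The subtraction replaces the arithmetic pairing by a pairing against a difference of CM $1$-cycles, and that difference is detected already by the \emph{generic} Chow modularity of \cite{Liu-Thesis} (Proposition \ref{modularity over generic fiber}). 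This is precisely where Proposition \ref{TAFL: maximal order} enters: the maximal order case ensures $\textbf{Diff}_{\alpha_m}(\xi_0)=0$, so once $\textbf{Diff}^\circ(\xi_0)=0$ one recovers $\textbf{Diff}(\xi_0)=0$. Your phrase ``the coefficient that pins down a constant at the very end'' undersells this --- without the $\alpha_m$ subtraction you never get an automorphic form to which the density lemma applies.

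\textbf{The role of arithmetic induction.} The relative Cayley map (Propositions \ref{geometric side: arithmetic induction}, \ref{analytic side: arithmetic induction}) does \emph{not} reduce the conjecture to ``atomic orbits.'' It applies only when $(u,u)\in O_{F_0}^\times$, in which case both sides for $(g,u)$ in rank $n$ equal both sides for $c_{\BV,\Herm}(g)$ in rank $n-1$. By induction on $n$ (with base $n=1$), this yields the twisted AFL at every inert place $v\notin\Delta$ whenever $\xi$ is a $v$-adic unit --- hence $\textbf{Diff}(\xi)=0$ in $\BR_\Delta$ for all $\xi$ coprime to $\Delta$. That is the input to the density lemma \cite[Lemma 13.6]{AFL-Wei2019}, not a reduction to a single surviving orbit. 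Your description via ``$(g,u)$-invariant flags'' and the functoriality item of Theorem \ref{prop: geometry of mirabolic cycle} is not how the reduction works; the actual mechanism is Proposition \ref{pullback of graphs}.

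\textbf{Minor omission.} The difference you write as $I-I^1$ must also include the archimedean correction $\Int^{\mathbf{K-B}}_\infty$ comparing Kudla and automorphic Green functions; without it the difference is not holomorphic (cf.\ \cite[Proposition 10.5]{AFL-JEMS} and Proposition \ref{local-global decomp geometric}). The local--global comparison at the basic locus that you flag as the main obstacle is handled in the paper exactly as in \cite{AFL-Wei2019}, using Proposition \ref{prop: basic uniformization twisted RSZ}; no refinement of Grothendieck--Messing for the $\sigma$-involution is needed, since the twisted CM cycle is defined via Hecke correspondences on $\CM^{E_0}$ and then pulled back, rather than as a fixed locus.
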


\section{Arithmetic induction systems and reductions}\label{section: arithmetic induction}

In this section, we develop and apply the idea of arithmetic induction systems (Definition \ref{defn: arithmetic induction system}). We show the geometric side and analytic side in the twisted AFL conjecture \ref{Twisted AFL conjecture} are both arithmetic induction systems, which will be used in our proof of twisted AFL. The mirabolic special cycle is a key ingredient for arithmetic induction on the geometric side.

\subsection{Arithmetic induction systems}

Let $F_0$ be a $p$-adic field. Consider tuples $(G, X, Y)$ of following data:
\begin{itemize}
    \item $G$ is a reductive group over $F_0$, $X$ is an affine variety over $F_0$ with $G$-action and $Y=(\BV, q)$ be a quadratic space over $F_0$ with an isometric linear algebraic action of $G$. 
\end{itemize}

For $y_0 \in Y$, denote its stabilizer in $G$ by $G_{y_0}$. We have $G(F_0) \backslash (X(F_0) \times G(F_0).y_0)= G_{y_0}(F_0) \backslash X(F_0)$.

\begin{definition}
A tuple $(G, X, Y)$ is good if for any $y_0 \in Y$ with $q(y_0) \in O_{F_0}^\times$, there exists a tuple $(G_{y_0}, X_{y_0}, Y_{y_0})$ and a rational isomorphism
\[
\mathfrak{c}_{y_0}: G_{y_0} \backslash X \to G_{y_0} \backslash (X_{y_0} \times Y_{y_0}).
\]
We call $\mathfrak{c}_{y_0}$ a relative Cayley map for $(G, X, Y, y_0)$.
\end{definition}

\begin{definition}\label{defn: arithmetic induction system}
For a good tuple $(G, X, Y)$, we say a locally constant function $F=F_{(G, X, Y)}$ on an open dense subset $U \subseteq X \times Y$ forms an arithmetic induction system for $(G, X, Y)$ if 
\begin{enumerate}
    \item $F$ is $G$-invariant.
    \item We have $F(x,y)=0$ unless $q(y) \in O_{F_0}$.  
    \item For $y_0 \in Y$, denote by $F_{y_0}=F(-, y_0) : X \to \BC$. If $(x, y_0) \in U$ with $q(y_0) \in O_{F_0}^\times$, then $\mathfrak{c}_{y_0}$ is defined on $U \times y_0$, and there exists a good tuple $(G_{y_0}, X_{y_0}, Y_{y_0})$ and a locally constant function $F_{(G_{y_0}, X_{y_0}, Y_{y_0})}$ on an open dense subset of $X_{y_0} \times Y_{y_0}$ such that
\[
F_{y_0} = F_{(G_{y_0}, X_{y_0}, Y_{y_0})} \circ \mathfrak{c}_{y_0}.
\]  
    \item The function  $F_{(G_{y_0}, X_{y_0}, Y_{y_0})}$ satisfies $(1)(2)(3)$ for the good tuple $(G_{y_0}, X_{y_0}, Y_{y_0})$.
\end{enumerate}
\end{definition}

Arithmetic induction systems for $(G, X, Y)$ form a $\BC$-vector space. 

\subsection{Relative Cayley maps and matchings}

We first consider the unitary side. Let $\BV=\BV^\flat \oplus Fu^{0}$ be a splitting of hermitian spaces where $(u^0, u^0)=1$. The element $u^0$ gives a splitting of the framing object $\BX=\BX^\flat \oplus \BE$, which induces embeddings
\[
i_{u_0}: \CN_{n-1} \cong \CZ(u^0) \hookrightarrow \CN_{n}, \quad X^\flat \mapsto X^\flat \oplus \CE,
\]
\[
i_{u_0, u_0^*}: \CN_{n-1}^\GL \cong \CZ(u^0, (u^0)^*) \hookrightarrow \CN_{n-1}^\GL, \quad X^\flat \mapsto X^\flat \oplus \CE.
\]

Any hermitian structure $A \in \Herm(\BV)$ could be written as 
\[
A=\begin{pmatrix}
a & b \\
c & d \\
\end{pmatrix}, \quad  a \in \Herm(\BV^\flat), b \in \BV^\flat,  c \in (\BV^\flat)^*, d \in F.
\]

\begin{definition}
Consider the $\U(V^\flat)$-equivariant rational relative Cayley map 
\begin{equation}
c_{\BV, \Herm}: \Herm(\BV) \to \Herm(\BV^\flat) \times \BV^\flat, \quad
A=\begin{pmatrix}
a & b \\
c & d \\
\end{pmatrix}
\mapsto (A^\flat= \frac{a}{1-d}, u^\flat=\frac{b}{1-d}).
\end{equation}
\end{definition}

We then consider the analytic side. Let $V_0=V_0^\flat \oplus Fe$ be a splitting of vector spaces over $F_0$. We have the induced splitting $V_0^*= (V_0^\flat)^* \oplus Fe^*$ where $(e^*, e)=1, e^*|_{V_0^\flat}=0$.
Any $\gamma \in \GL(V_0)$ could be written as \[
\gamma=\begin{pmatrix}
a & b \\
c & d \\
\end{pmatrix}, \quad  a \in \End(V_0^\flat), b \in \BV^\flat_0,  c \in (\BV^\flat_0)^*, d \in F_0.
\]

\begin{definition}
Consider the $\GL(V_0^\flat)$-equivariant rational relative Cayley map 
\begin{equation}
c_{V_0, \GL}: \GL(V_0) \to \GL(V_0^\flat) \times V_0 \times V_0^*, \quad
\gamma=\begin{pmatrix}
a & b \\
c & d \\
\end{pmatrix}
\mapsto (\gamma^\flat= \frac{a}{1-d}, u^\flat_1 =\frac{b}{1-d}, u^\flat_2=\frac{c}{1-d}).
\end{equation}
\end{definition}

\begin{proposition}
\begin{enumerate}
    \item If $(g, u^0)$ (resp. $(\gamma, e, e^*)$ ) is regualr semi-simple, then $c_{\BV, \Herm}(g)$ (resp. $c_{V_0, \GL}(\gamma)$ )is regular semi-simple. 
    \item If $(g, u^0)$ matches $(\gamma, e, e^*)$, then $c_{\BV, \Herm}(g)$ matches $c_{V_0, \GL}(\gamma)$. 
\end{enumerate}
\end{proposition}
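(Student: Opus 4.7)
Both parts reduce to examining the block decompositions of $A$ and $\gamma$ with respect to the splittings $\BV = \BV^\flat \oplus F u^0$ and $V_0 = V_0^\flat \oplus F e$; the key observation is that both Cayley maps are given by the \emph{same} rational block formula, namely division of the off-diagonal blocks by the common lower-right scalar $1 - d$.

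For part (1), regular semi-simplicity of $(A, u^0)$ on the unitary side amounts to the cyclicity condition $\operatorname{span}_F(u^0, A u^0, \ldots, A^{n-1} u^0) = \BV$. Using $A u^0 = b + d u^0$ together with $A v = a v + (c v) u^0$ for $v \in \BV^\flat$, one proves by induction on $k$ that the projection $v_k := \pi_{\BV^\flat}(A^k u^0)$ satisfies a recursion $v_k = \alpha_{k-1} b + a v_{k-1}$ with $v_0 = 0$ and $\alpha_0 = 1$, so $v_k$ is a polynomial in $a$ applied to $b$ with leading term $a^{k-1} b$. It follows that
\begin{equation*}
\operatorname{span}_F(u^0, A u^0, \ldots, A^k u^0) = \operatorname{span}_F(u^0, b, a b, \ldots, a^{k-1} b),
\end{equation*}
so $(A, u^0)$ is regular semi-simple iff $b, a b, \ldots, a^{n-2} b$ is an $F$-basis of $\BV^\flat$. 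Since $(A^\flat)^i u^\flat = a^i b/(1-d)^{i+1}$, this is precisely the regular semi-simplicity of $c_{\BV, \Herm}(A) = (A^\flat, u^\flat)$, given $1 - d \neq 0$, which holds on the domain of the Cayley map. The linear case $(\gamma, e, e^*)$ is analogous: one records a second, dual cyclicity condition for $c, c a, \ldots, c a^{n-2}$ coming from the iterates of $e^*$, and this condition is likewise preserved by the rescaling defining $c_{V_0, \GL}$.

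For part (2), unpack the matching relation: there exists $P \in \GL(V)$ with $P A P^{-1} = \gamma$, $P u^0 = e$, and $u^{0 *} = e^* \circ P$, where $u^{0 *}(x) = h_V(x, u^0)$. The last identity forces $\BV^\flat = (u^0)^\perp$ to map under $P$ onto $V_0^\flat \otimes_{F_0} F = \ker e^*$, so $P$ restricts to an $F$-linear isomorphism $P_\flat \colon \BV^\flat \xrightarrow{\sim} V_0^\flat \otimes_{F_0} F$. With respect to the compatible splittings, the block entries of $A$ and of $\gamma$ are then intertwined by $P_\flat$, and they share the common lower-right scalar $d$; on the unitary side the constraint that $A$ is hermitian further identifies the lower-left entry $c$ with the hermitian dual of $b$. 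Dividing through by the common scalar $1 - d$ gives
\begin{equation*}
P_\flat A^\flat P_\flat^{-1} = \gamma^\flat, \qquad P_\flat u^\flat = u_1^\flat, \qquad u^{\flat *} \circ P_\flat^{-1} = u_2^\flat,
\end{equation*}
where $u^{\flat *}$ denotes the hermitian dual of $u^\flat$ inside $\BV^\flat$. This is exactly the matching of $c_{\BV, \Herm}(A)$ with $c_{V_0, \GL}(\gamma)$.

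The main step is the inductive span identification in part (1); once this is in hand, the preservation of regular semi-simplicity is immediate, and the matching in part (2) is a formal consequence of the fact that the two Cayley maps are defined by identical rational block formulas on the unitary and linear sides. The only subtlety worth highlighting is the verification that $P$ carries $\BV^\flat$ to $V_0^\flat \otimes_{F_0} F$, which is ensured precisely by the third component $u^{0 *} = e^* \circ P$ of the matching datum.
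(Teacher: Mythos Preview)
Your proof is correct. For part~(2) you follow essentially the same route as the paper: both Cayley maps are restrictions of a common rational map $c_V\colon \End(V)\dashrightarrow \End(V^\flat)\times V^\flat\times (V^\flat)^*$, and the matching element $P\in\GL(V)$ intertwines the two block decompositions (because it carries $u^0\mapsto e$ and $\ker (u^0)^*\mapsto \ker e^*$), hence intertwines the Cayley images. You spell this out more carefully than the paper does, including the verification that $P$ is block-diagonal and that hermiticity of $A$ forces $c=b^*$, which is needed to see that $u^{\flat *}$ matches $u_2^\flat$.

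For part~(1) you take a genuinely different route. The paper argues in one line from the stabilizer definition of regular semi-simplicity: the stabilizer of $(A,u^0)$ in $\U(\BV)$ equals the stabilizer of $A$ in $\U(\BV^\flat)$ (the latter being the isotropy group of $u^0$), and since the Cayley map is $\U(\BV^\flat)$-equivariant with $\U(\BV^\flat)$-invariant fibers, stabilizers are preserved. You instead invoke the explicit cyclicity criterion from Section~3.1 and prove directly, via the recursion for $v_k=\pi_{\BV^\flat}(A^k u^0)$, that $\operatorname{span}(u^0,Au^0,\dots,A^k u^0)=Fu^0\oplus\operatorname{span}(b,ab,\dots,a^{k-1}b)$. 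This is longer but entirely self-contained and avoids any appeal to the closed-orbit condition, which the stabilizer argument in principle has to address separately. Either approach is fine; yours has the advantage of being verifiable by hand, while the paper's is more portable to other equivariant rational maps.
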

\begin{proof}
As regular semi-simpleness is a condition on stabilizer of the group action, the first part follows. If $(g, u^0)$ matches $(\gamma, e, e^*)$, then by definition $(g, u^0, (u^0)^*)$ and $(\gamma, e, e^*)$ is conjugate in $\End(V) \times V \times V^*$. The relative Cayley map $c_V: \End(V) \to \End(V^\flat) \times V \times V^*$ given by $A \to (a, b/(1-d), c/(1-d))$ restricts to $c_{V_0, \GL}$ and $c_{\BV, \Herm}$. As $c_V$ is $\GL(V)$-equivariant, the result follows.
\end{proof}

\subsection{Arithmetic induction on geometric side}

\begin{proposition}\label{pullback of graphs}
Assume that $1-d \in O_F^\times$. Then for $g \in \GL(V)(F_0)$ such that $(g, u^0)$ is regular semi-simple. We have an equality of closed formal subscheme in $\CN_{n-1}^\GL$:
\begin{equation}
g \CN_n \cap_{\CN_n^\GL} \CN_{n-1}^\GL= g^\flat \CN_{n-1} \cap_{\CN_{n-1}^\GL} \CZ^\GL(u^\flat). 
\end{equation}
\end{proposition}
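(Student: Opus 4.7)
The plan is to verify the claimed equality of closed formal subschemes of $\CN_{n-1}^\GL$ by checking that both sides cut out the same moduli-theoretic condition on a test point $(X^\flat, \iota^\flat, \rho^\flat)$. The key tool is the embedding $\CN_{n-1}^\GL \hookrightarrow \CN_n^\GL$, $X^\flat \mapsto X^\flat \oplus \CE$, combined with a block analysis of $g$ (or equivalently the hermitian form $A_g = g h_\BV g^{-1}$) with respect to $\BX = \BX^\flat \oplus \BE$, leveraging the hypothesis $1 - d \in O_F^\times$.

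First I would unfold the LHS. A point $(X^\flat, \iota^\flat, \rho^\flat) \in \CN_{n-1}^\GL(S)$ gives the direct-sum point $(X, \iota, \rho) = (X^\flat \oplus \CE_S, \iota^\flat \oplus \iota_\CE, \rho^\flat \oplus \rho_\BE) \in \CN_n^\GL(S)$. By the definition of the $g$-translate, $(X, \iota, \rho) \in g\CN_n$ means that $(X, \iota, g^{-1}\rho)$ admits a principal polarization compatible with the framing, i.e. the pullback $(g^{-1}\rho)^*\lambda_\BX$ extends to a principal polarization of $X$. In Dieudonn\'e language this is the self-duality of the lattice $g^{-1}(M^\flat \oplus \BM_\BE)$ in $\BM_\BQ = \BM^\flat_\BQ \oplus \BM_{\BE,\BQ}$ under the hermitian form from $\lambda_\BX$; rewriting via the adjoint involution, it becomes a lattice equation relating $M^\flat$, the adjoint $g^*$, and the self-dual piece $\BM_\BE$.

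Next I would unfold the RHS. Under the identification $\GL(\BV^\flat)/\U(\BV^\flat) \cong \Herm(\BV^\flat)$, the Cayley output $g^\flat = a/(1-d)$ represents a coset acting on $\CN_{n-1}^\GL$, and $(X^\flat, \iota^\flat, \rho^\flat) \in g^\flat \CN_{n-1}$ translates into the self-duality of $(g^\flat)^{-1}M^\flat$. The condition $(X^\flat, \iota^\flat, \rho^\flat) \in \CZ^\GL(u^\flat)$ translates directly into the integrality statement $u^\flat \cdot \BM_\BE \subseteq M^\flat$, i.e. the lifting of the quasi-homomorphism $u^\flat \colon \BE \to \BX^\flat$ to a homomorphism $\CE \to X^\flat$ over $S$.

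The heart of the argument is a block factorization. Writing $A_g = \begin{pmatrix} a & b \\ b^* & d \end{pmatrix}$ and using the unit $1-d \in O_F^\times$, one performs a block-LDU type decomposition that factors the action of $g$ on $\BM_\BQ$ into (i) unipotent operators involving $u^\flat$ and $(u^\flat)^*$ that shear between the $\BX^\flat$ and $\BE$ components, and (ii) a block-diagonal operator involving $g^\flat$ on $\BX^\flat$ and the unit $1-d$ on $\BE$. Since $\BM_\BE$ is self-dual and invariant under multiplication by units of $O_F$, the unipotent factors preserve $\BM_\BE$ and act on $M^\flat$ precisely by the mirabolic translation governed by $u^\flat \BM_\BE \subseteq M^\flat$, while the diagonal factor encodes the self-duality condition of $(g^\flat)^{-1}M^\flat$. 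Substituting the factorization into the LHS lattice equation and tracking each piece, the LHS condition splits exactly as the product of the two RHS conditions, with cross-terms absorbed using the unit $(1-d)$.

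Finally I would promote the set-theoretic equivalence to an equality of closed formal subschemes. Both sides are cut out by moduli-theoretic conditions (Cartier divisor and polarizability conditions) controlled by Grothendieck--Messing deformation theory, and the equivalence of these conditions on every test scheme $S$ gives the equality of formal subschemes in $\CN_{n-1}^\GL$. The hardest step will be the block factorization and the accompanying lattice bookkeeping in Step 3: finding the precise factorization of $g$ whose three blocks feed into the RHS as the two factors $g^\flat \CN_{n-1}$ and $\CZ^\GL(u^\flat)$, and verifying that every cross-term between the $\BX^\flat$ and $\BE$ components becomes automatic under the mirabolic lifting condition. The unit hypothesis $1 - d \in O_F^\times$ is exactly what licenses this factorization and the freedom to clear the scalar $(1-d)$ throughout.
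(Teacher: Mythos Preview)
Your plan is correct and follows essentially the same approach as the paper: both arguments identify $g\CN_n$ as the lifting locus of the polarization $\lambda_g = g.\lambda_\BX$, and then perform a block analysis of $\lambda_g$ (equivalently of $A_g$) with respect to the splitting $\BX = \BX^\flat \oplus \BE$ to separate out the $g^\flat$-polarizability condition on $X^\flat$ and the mirabolic lifting condition for $u^\flat$.

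The only difference is one of packaging. The paper works directly with $\lambda_g$ viewed as a morphism $\BX^\flat \times \BE \to (\BX^\flat)^\vee \times \BE^\vee$ of formal $O_F$-modules and reads off its block components, arguing one inclusion and declaring the other ``similar''; it is terse and leaves the normalization by $1-d$ implicit in the Cayley-map definitions of $g^\flat$ and $u^\flat$. Your Dieudonn\'e-lattice formulation with an explicit block-LDU factorization is more detailed and makes transparent exactly where the hypothesis $1-d \in O_F^\times$ enters (to invert the bottom-right block and absorb the cross-terms), and it yields both inclusions at once. Either route suffices; yours is closer to a fully written-out proof.
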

\begin{proof}
The group $\GL(\BV)$ acts on the set of principal polarizations on $\BX$, whose stabilizer at $\lambda_\BX$ can be identified with $\U(\BV)$. There $g \in \GL(\BV)/\U(\BV)$ can be regarded as a principal polarization $\lambda_g=g.\lambda_\BX$ on $\BX$. Then $g\CN_n \subseteq \CN_n^\GL$ is the lifting locus of the principal polarization $\lambda_g$. We study $(X, \iota, \rho) \in g \CN_n \cap_{\CN_n^\GL} \CN_{n-1}^\GL$. 

Firstly the new polarization $\lambda_g$ on $\BX=\BX^\flat \times \BE$ gives a polarization on $\BX^\flat$ which corresponds to $g^\flat$. Regard $\lambda_g$ as an endomorphism from $X^\flat \times  \BE$ to $(X^\flat)^\vee \times \BE^\vee$, which gives a lifting from $\BE$ to $(X^\flat)^\vee$ which gives a lifting of $u^\flat: \BE \to \BX$ via the lifting of the polarization $X \to X^\vee$ on $g \CN_n$. This shows $g \CN_n \cap_{\CN_n^\GL} \CN_{n-1}^\GL \subseteq  g^\flat \CN_{n-1} \cap_{\CN_{n-1}^\GL} \CZ^\GL(u^\flat).$ The other direction follows similarly.
\end{proof}

\begin{proposition}\label{geometric side: arithmetic induction}
Assume that $1-d \in O_F^\times$. Hence for $g \in \GL(V)(F_0)$ such that $(g, u^0)$ is regular semi-simple, we have 
\[
\Int^{\Herm, \BV}(g, u^0)= \Int^{\Herm, \BV^\flat}(c_{\BV, \Herm}(g)) \in \BQ.
\]
\end{proposition}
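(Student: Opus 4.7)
The plan is to rewrite both sides as triple derived intersections on large ambient spaces using the mirabolic reformulation of Proposition \ref{prop: IntHerm=mirabolic int}, and then match them term by term after reducing the ambient space from $\CN_n^\GL$ down to $\CN_{n-1}^\GL$. Namely, by Proposition \ref{prop: IntHerm=mirabolic int},
\[
\Int^{\Herm,\BV}(g, u^0) = \CN_n \cap^\BL_{\CN_n^\GL} g\CN_n \cap^\BL_{\CN_n^\GL} \CZ^\GL(u^0).
\]

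First I would evaluate $\CN_n \cap^\BL_{\CN_n^\GL} \CZ^\GL(u^0)$ using the pullback formula (Theorem \ref{prop: geometry of mirabolic cycle}(2)) and the fact that $\CZ^\GL(u^0)$ is a Cartier divisor not containing $\CN_n$, so the intersection is classical and equals the Kudla--Rapoport divisor $\CZ(u^0)$. Since $(u^0, u^0) = 1$, the lifting of $u^0$ orthogonally splits off the $\CE$ factor from the universal object, identifying $\CZ(u^0) \cong \CN_{n-1}$ via the embedding $i_{u^0}$ (cf.\ \cite[Prop.\ 3.5]{KR2011local}). Next, since $\CN_{n-1} \hookrightarrow \CN_{n-1}^\GL \hookrightarrow \CN_n^\GL$ is a chain of closed immersions, associativity of the derived tensor product factors the remaining intersection through the intermediate ambient space, yielding
\[
\Int^{\Herm,\BV}(g, u^0) = [\CN_{n-1}] \cap^\BL_{\CN_{n-1}^\GL} \bigl(\CN_{n-1}^\GL \cap^\BL_{\CN_n^\GL} g\CN_n\bigr).
\]

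Proposition \ref{pullback of graphs} identifies the inner factor classically with $g^\flat \CN_{n-1} \cap_{\CN_{n-1}^\GL} \CZ^\GL(u^\flat)$; both sides have the correct expected codimension $n-1$ in the regular formal scheme $\CN_{n-1}^\GL$, so \cite[Lemma B.2]{AFL-Wei2019} upgrades this to a derived equality. Substituting back and re-applying Proposition \ref{prop: IntHerm=mirabolic int} in dimension $n-1$, I obtain
\[
\Int^{\Herm,\BV}(g, u^0) = [\CN_{n-1}] \cap^\BL_{\CN_{n-1}^\GL} g^\flat \CN_{n-1} \cap^\BL_{\CN_{n-1}^\GL} \CZ^\GL(u^\flat) = \Int^{\Herm,\BV^\flat}(g^\flat, u^\flat),
\]
which equals $\Int^{\Herm,\BV^\flat}(c_{\BV,\Herm}(g))$ by the definition of the relative Cayley map. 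The main obstacle will be the careful codimension and Tor-vanishing book-keeping that allows classical schematic intersections to coincide with derived ones at each step; most crucially, the step invoking Proposition \ref{pullback of graphs}, which is stated only set-theoretically, requires both the regularity of $\CN_{n-1}^\GL$ and the hypothesis $1 - d \in O_F^\times$ (which guarantees the relative Cayley map is well-defined and the cut-out equations are transverse) so that the expected codimension $n-1$ is actually achieved.
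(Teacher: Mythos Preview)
Your argument is correct and follows essentially the same route as the paper: both proofs use Proposition \ref{prop: IntHerm=mirabolic int} to pass to triple mirabolic intersections, invoke Proposition \ref{pullback of graphs} together with \cite[Lemma B.2]{AFL-Wei2019} to upgrade the scheme-theoretic equality $g\CN_n \cap_{\CN_n^\GL} \CN_{n-1}^\GL = g^\flat\CN_{n-1} \cap_{\CN_{n-1}^\GL} \CZ^\GL(u^\flat)$ to a derived one, and then use the identification $\CZ(u^0)\cong\CN_{n-1}$ to match the two sides. Your write-up is in fact more careful than the paper's about the change-of-ambient-space bookkeeping; one small correction is that Proposition \ref{pullback of graphs} is already stated as an equality of closed formal subschemes (not merely set-theoretically), so your Lemma B.2 step only needs the codimension count, not a separate scheme-structure argument.
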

\begin{proof}
By Proposition \ref{prop: IntHerm=mirabolic int},
$$
\Int^{\Herm, \BV^\flat}(g^\flat, u^\flat)= \CN_{n-1} \cap^\BL_{\CN_{n-1}^\GL} g^\flat \CN_{n-1} \cap^\BL_{\CN^\GL_{n-1}} \CZ^\GL(u^\flat).
$$
The above intersection $g \CN_n \cap_{\CN_n^\GL} \CN_{n-1}^\GL= g^\flat \CN_{n-1} \cap_{\CN_{n-1}^\GL} \CZ^\GL(u^\flat)$ has correct codimension hence by \cite[Lemma B.2]{AFL-Wei2019} we have
\[
g \CN_n \cap_{\CN_n^\GL}^\BL \CN_{n-1}^\GL= g^\flat \CN_{n-1} \cap_{\CN_{n-1}^\GL}^\BL \CZ^\GL(u^\flat). 
\]
Intersecting both sides with $\CN_n$ inside $\CN_n^\GL$ and using identification $\CZ(u_0) \cong \CN_{n-1}$ and above Proposition \ref{pullback of graphs}, we obtain the result.
\end{proof}

\subsection{Arithmetic induction on analytic side}

Assume that the self-dual lattice $L=L^\flat \oplus O_Fe$ has an orthogonal decomposition where $(e,e)=1$, which induces the splitting $V_0=V_0^\flat \oplus F_0e$.

\begin{proposition}\label{analytic side: arithmetic induction}
Assume that $1-d \in O_F^\times$. Then for $\gamma \in \GL(V_0)(F_0)$ such that $(\gamma, e, e^*)$ is regular semi-simple, write $c_{V_0,\GL}(\gamma)=(\gamma^\flat, u_1^\flat, u_2^\flat)$. We have
\begin{equation}
\Orb(\gamma, e, e^*, \Phi'_{L'}, s)= \Orb(\gamma^\flat, u_1^\flat, u_2^\flat, \Phi'_{(L')^\flat},s)
\end{equation}
In particular, we have $\partial \Orb((\gamma,u'),  \Phi'_{L'}) = \partial \Orb(c_{V_0,\GL}(\gamma), \Phi'_{(L')^\flat}) \in \BQ \log q$.
\end{proposition}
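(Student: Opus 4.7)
The plan is to effect a change of variables in the integration over $H' = \GL(V_0)(F_0)$ that reduces it to an integration over the subgroup $H'_\flat := \GL(V_0^\flat)(F_0)$. The subgroup $H'_\flat$, embedded block-diagonally, is precisely the stabilizer of $(e, e^*)$ for the action $h \cdot (u_1, u_2) = (h^{-1} u_1, u_2 h)$, and the orbit is the open subvariety $\Omega = \{(v, w) \in V_0 \times V_0^* : w(v) = 1\}$. I parametrize $h$ in Iwasawa form as $h = \iota(h^\flat) \cdot N(x, y, \lambda)$, where $\iota(h^\flat) = \begin{pmatrix} h^\flat & 0 \\ 0 & 1 \end{pmatrix}$ and
$$N(x, y, \lambda) = \begin{pmatrix} 1 & 0 \\ y & 1 \end{pmatrix} \begin{pmatrix} 1 & 0 \\ 0 & \lambda \end{pmatrix} \begin{pmatrix} 1 & x \\ 0 & 1 \end{pmatrix}, \qquad (x, y, \lambda) \in V_0^\flat \times (V_0^\flat)^* \times F_0^\times.$$
This covers an open dense subset of $H'$, and the Haar measure decomposes as $dh = dh^\flat \, d\mu(x, y, \lambda)$ for an explicit Jacobian. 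Crucially, since $\iota(h^\flat)$ stabilizes $(e, e^*)$, the orbit coordinates $h^{-1}e$ and $e^*h$ depend only on $(x, y, \lambda)$, not on $h^\flat$.

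Substituting this decomposition into the orbital integral, the support conditions $1_{L_0}(h^{-1}e) \cdot 1_{L_0^*}(e^*h)$ restrict the outer variables to a compact locus in $(x, y, \lambda)$-space. The key algebraic step is to analyze
$$1_{\GL(L_0)}(h^{-1}\gamma h) = 1_{\GL(L_0)}\bigl(N^{-1} \iota(h^\flat)^{-1} \gamma \iota(h^\flat) N\bigr).$$
Writing the block form of $\iota(h^\flat)^{-1} \gamma \iota(h^\flat)$ and conjugating by $N$, I would show, under the hypothesis $1 - d \in O_F^\times$, that the resulting integrality condition in $\GL(L_0)$ decouples into the joint integrality of three rescaled blocks in $\End(L_0^\flat) \times L_0^\flat \times (L_0^\flat)^*$. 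The decoupling relies on the linear-algebra fact that for a block matrix $\begin{pmatrix} a & b \\ c & d \end{pmatrix}$ with $1 - d$ a unit, integrality in $\GL(L_0)$ is equivalent to the joint integrality of $(a/(1-d), b/(1-d), c/(1-d))$ together with $d \in O_{F_0}$. After applying the relative Cayley map $c_{V_0, \GL}$, these three rescaled blocks match $(h^\flat)^{-1} \gamma^\flat h^\flat$, $(h^\flat)^{-1} u_1^\flat$, and $u_2^\flat h^\flat$ up to $(x, y, \lambda)$-dependent shifts that get absorbed into the outer integration.

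After Fubini, the outer integration over $(x, y, \lambda)$ on its compact locus contributes a volume that normalizes to $1$ relative to the Haar measures on $H'$ and $H'_\flat$, while the inner integration over $h^\flat \in H'_\flat$ reproduces $\Orb((\gamma^\flat, u_1^\flat, u_2^\flat), \Phi'_{(L')^\flat}, s)$. The character $|h|^s \eta(h)$ factors as $|h^\flat|^s \eta(h^\flat)$ times contributions in $\lambda$ that integrate to $1$ over the compact locus (using that $\det h = \det h^\flat \cdot \lambda$), and the transfer factors $\omega(\gamma, e, e^*)$ and $\omega(\gamma^\flat, u_1^\flat, u_2^\flat)$ agree because the corresponding determinants $\det(\gamma^i e)_{i=0}^{n-1}$ and $\det((\gamma^\flat)^i u_1^\flat)_{i=0}^{n-2}$ differ by a power of $1 - d$, which is a unit.

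The main obstacle is the algebraic decoupling in the middle paragraph: the conjugation $N^{-1} \iota(h^\flat)^{-1} \gamma \iota(h^\flat) N$ produces cross-terms in $(x, y)$ and the blocks of $\gamma$, and one must verify that under the hypothesis $1 - d \in O_F^\times$ these cross-terms can be absorbed into a coherent substitution so that the integrality conditions exactly match the target orbital integrand on $H'_\flat$, with the Jacobian of $d\mu(x, y, \lambda)$ balancing the volume of the compact outer locus. The hypothesis $1 - d \in O_F^\times$ is essential: it ensures the relative Cayley map is regular at $\gamma$ and that rescaling by $1 - d$ preserves lattices at every step.
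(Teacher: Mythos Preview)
Your approach is correct and coincides with the paper's at the conceptual level: the paper invokes an ``integral transitivity lemma'' from \cite{ZZhang2021}, which is exactly the transitivity of $\GL(L_0)$ on $\{(v,w)\in L_0\times L_0^*: w(v)=1\}$, followed by the Cayley-map computation in \cite[Cor.~3.22]{ZZhang2021}. The difference is tactical. You parametrize the transversal to $H'_\flat$ explicitly via $N(x,y,\lambda)$, which forces you to track conjugation by $N$ and produces the cross-terms you correctly flag as the main obstacle. The paper's route avoids this: the support condition $1_{L_0}(h^{-1}e)\,1_{L_0^*}(e^*h)$ together with integral transitivity forces $h\in H'_\flat\cdot\GL(L_0)$; writing $h=\iota(h^\flat)\, k$ with $k\in\GL(L_0)$ makes $1_{\GL(L_0)}(h^{-1}\gamma h)=1_{\GL(L_0)}(\iota(h^\flat)^{-1}\gamma\,\iota(h^\flat))$ immediate (conjugation by $k$ preserves $\GL(L_0)$) and $|h|^s\eta(h)=|h^\flat|^s\eta(h^\flat)$ since $\det k$ is a unit. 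The remaining step is then the straightforward lattice-level check that $1_{\GL(L_0)}$ on the block matrix $\iota(h^\flat)^{-1}\gamma\,\iota(h^\flat)$ factors as $\Phi'_{(L')^\flat}$ evaluated at $((h^\flat)^{-1}\gamma^\flat h^\flat,\,(h^\flat)^{-1}u_1^\flat,\,u_2^\flat h^\flat)$, with no $N$-conjugation to untangle. Your transfer-factor and character bookkeeping is correct.
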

\begin{proof}
This follows from the proof of \cite[Corollary 3.22.]{ZZhang2021} and the integral transitivity lemma \cite[Lemma 4.12]{ZZhang2021}. Note in our case, no twisting element is needed.
\end{proof}

\begin{remark}
We expect arithmetic induction systems to arise from orbital integrals in general. Consider the orbital integral of $f' \otimes \phi' \in \CS(X' \times Y')$. Choose a compact open subgroup $K_{H'} \leq H'$ such that $f'$ is $K_{H'}$-invariant. Fix $x_0 \in Y'$ with $(x_0, x_0)=1$. We know $H'$ acts transitively on $\{ x \in  Y' | (x,x)=1 \}$ whose stabilizer at $x_0$ is $H'^\flat$. If we assume the transitivity for the action of $K_{H'}$ on $\Omega:=\{ x \in Y' | (x,x)=1, \phi'(x) \not =0 \}$, then we have $\Orb_{H'}(g, x_0, (f' \otimes \phi') )=\vol(K_{H'}) \Orb_{H'^{\flat} }(g, f)$. If $f \in \CalS(X')$ goes to $f^\flat \times \phi^\flat \in \CalS(X'^\flat \times Y'^\flat)$ under the relative Cayley map, then generically we have  $\Orb_{H^\flat}(g,f)= \Orb_{H^\flat}(c_{V_0}(g), f^\flat \otimes \phi^\flat)$.
\end{remark}

\section{Twisted unitary Shimura varieties and special cycles}\label{section: global special cycles}

Starting with this section, we give the proof of the twisted AFL conjecture, i.e. Theorem \ref{Twisted AFL theorem} via global methods and Weil type relative trace formulas. Moreover, we introduce and study twisted CM cycles on unitary Shimura varieties. It turns out that for our local purpose on twisted AFL, we could work with punctured integral models away from infinitely many places and do not need to handle the constant term of global distributions. 

We switch to global set up. Let $F_0$ be a totally real number field, and $F/F_0$ be a CM quadratic extension. For the properness of our Shimura varieties, we always assume that $[F_0: \BQ]>1$. Let $E_0/F_0$ be a totally real quadratic extension. Then $E=E_0 \otimes_{F_0} F$ is a CM quadratic extension of $E_0$. Let $F'/F_0$ be the third subfield of $E$ fixed by $\sigma_{E_0/F_0} \otimes \sigma_{F/F_0}$, where $\sigma_{E_0/F_0}$ (resp. $\sigma_{F/F_0}$) the non-trivial Galois involution on $E_0/F_0$ (resp. $F/F_0$). We will study the embedding of algebraic groups 
$$H=\Res_{F_0/\BQ}\U(V) \to G=\Res_{E_0/\BQ}\U(V_{E_0})$$
over $\BQ$ for a suitable $F/F_0$-hermitian space $V$. In order to use moduli interpretations of Shimura varieties, we will work with the RSZ variant of unitary Shimura varieties \cite{RSZ-AGGP}.

\subsection{Shimura varieties for $V$ and $V_{E_0}$}\label{section: global set up of Shimura data}

Choose a CM type $\Phi \subseteq \Hom(F, \ov {\BQ})$ of $F$ with a distinguished element $\varphi_0 \in \Phi$. Let $V$ be a $F/F_0$-hermitian space of dimension $n \geq 1$ with signature $\{ (n-1,1)_{\varphi_0}, (n,0)_{\varphi \in \Phi -  \{ \varphi_0 \} }  \} $. Consider the following reductive groups over $\mathbb Q$:
\[
	Z^{\mathbb Q}:=\{ x \in  \Res_{F/\mathbb Q} \mathbb G_m | \, \Nm_{F/F_0} x \in \mathbb G_m \}, 
\]
\[
	H^{\mathbb Q} := \{g \in \Res_{F_0/\mathbb Q} \GU(V) | \, c(g) \in  \mathbb G_m \},
\]
\[
H=\Res_{F_0/\BQ} \U(V), \quad 	\wt{H}=Z^{\mathbb Q} \times H \cong Z^{\mathbb Q} \times_{\mathbb G_m} H^{\mathbb Q}.
\]
Here $c: \Res_{F_0/\mathbb Q} \GU(V)  \to  \Res_{F_0/\mathbb Q} \mathbb G_m$ is the similitude character. For neat compact open subgroups $K_{Z^{\mathbb Q}} \subseteq Z^{\mathbb Q}(\mathbb A_f), K \subseteq H(\mathbb A_f)$, consider the the \emph{RSZ unitary Shimura variety} with level $\wt{K}=K_{Z^{\mathbb Q}} \times K$ associated to the Shimura datum $(\wt{H}, \{ h_{\wt H} \} )$:
	\begin{equation}\label{eq: Shimura=Shimura x Shimura}
		\Sh_{\wt{K}}( \wt{H}, \{ h_{\wt{H}} \} )(\BC) \cong  \Sh_{K_{Z^{\BQ}}}( Z^{\BQ}, h_{\Phi})(\BC) \times \Sh_{K}( H, \{ h_{H} \})(\BC).
	\end{equation}
	
The canonical model of $\Sh_{K_{\wt{H}}}( \wt{H}, \{ h_{\wt{H}} \} )$
$$M:=M_{\wt{H}, \wt{K}} \to \Spec F^\RSZ$$
is a $n-1$ dimensional smooth \emph{projective} ($[F_0 : \BQ]>1$) variety over the reflex field $F^\RSZ$. 

Now we consider the CM type $\Phi_{E_0} \subseteq \Hom(E, \ov{\BQ})$ of $E$ induced by $\Phi$, i.e. $\varphi \in \Phi_{E_0}$ iff $\varphi|_{F} \in \Phi$. We denote the element of $\Phi_{E_0}$ over $\varphi_0 \in \Phi$ by $\{ \varphi_{0, E_0}, \varphi_{0, E_0}^c \}$, where $\varphi_{0, E_0}^c=\varphi_{0,E_0} \circ \sigma_{E/F}$. The base change $V_{E_0}$ is a $E/E_0$-hermtian space of dimension $n \geq 1$ with signature $\{ (n-1,1)_{\varphi_{0,E_0}}, (n-1,1)_{\varphi_{0, E_0}^c}, (n,0)_{\varphi \in \Phi_{E_0} - \{ \varphi_{0, E_0}, \varphi_{0, E_0}^c \} }  \} $. Similar to $V$, we consider the following reductive groups for $V_{E_0}$:
\[
	Z^{\mathbb Q, E_0}:=\{ x \in  \Res_{E/\mathbb Q} \mathbb G_m | \, \Nm_{E/E_0} x \in \mathbb G_m \}, 
\]
\[
	G^{\mathbb Q} := \{g \in \Res_{E_0/\mathbb Q} \GU(V_{E_0}) | \, c(g) \in  \mathbb G_m \},
\]
\[
G=\Res_{E_0/\BQ} \U(V_{E_0}), \quad 	\wt{G}=Z^{\mathbb Q, E_0} \times G \cong Z^{\mathbb Q, E_0} \times_{\mathbb G_m} G^{\mathbb Q}.
\]

For neat compact open subgroups $K_{Z^{\mathbb Q, E_0}} \subseteq Z^{\mathbb Q}(\mathbb A_f), K^{E_0} \subseteq G(\mathbb A_f)$, we have the canonical model of RSZ Shimura varieties for the Shimura datum $(\wt{G}, \{ h_{\wt{G}} \} )$:
$$
M^{E_0}=M_{\wt{G}, \wt{K^{E_0}}} \to \Spec E^\RSZ
$$
which is a $2n-1$ dimensional smooth \emph{projective} variety over the reflex field $E^\RSZ$. Base change gives a natural embedding of Shimura datum $(\wt{H}, \{ h_{\wt{H}} \}) \to (\wt{G}, \{h_{\wt{G}}\})$. Hence for compatible levels $\wt{K}$ and $\wt{K^{E_0}}$ (i.e. $\wt{K^{E_0}} \cap \wt{H}=\wt{K}$), we have a twisted diagonal embedding of Shimura varieties
\begin{equation}\label{twisted embedding: generic fiber}
M \times_{\Spec F^\RSZ} \Spec {E^\RSZ} \to M^{E_0}.    
\end{equation}

\subsection{Punctured PEL type integral models}\label{subsection: integral model}
For RSZ unitary Shimura varieties, we have moduli interpretations. 
Choose a hermitian lattice $L \subseteq V$ and a finite collection $\Delta$ of finite places for $F_0$ such that 
\begin{itemize}
    \item all $2$-adic places are in $\Delta$ and $F/F_0$ is unramified outside $\Delta$.
    \item $E_0/F_0$ is unramified outside $\Delta$.
    \item $L$ is self-dual away from $\Delta$.
    \item $\Phi$ is unramified away from $\Delta$.
\end{itemize} 
\begin{definition}\label{level for L}
Let $K(L)^{\Delta}$ (resp. $K^{E_0}(L)^\Delta$) be the stabilizer of $L$ (resp. $L_{O_{E_0}}$) in $ \U(V)(\mathbb A_{f}^{\Delta})$ (resp. $\U(V_{E_0})(\mathbb A_{f}^{\Delta})$) and $K_{Z^{\mathbb Q}}^{\Delta} $ (resp. $K_{Z^\BQ}^{E_0,\Delta}$) be the unique maximal compact open subgroup of 
	$Z^{\BQ}(\BA_{f}^{\Delta})$ (resp. $Z^{\BQ, E_0}(\BA_{f}^{\Delta})$). And we call
	\begin{equation}\label{level for L and Delta}
		{
			\wt{K}= K_{Z^\BQ, \Delta} \times K_{Z^{\mathbb Q}}^{\Delta} \times K_{\Delta} \times K(L)^{\Delta} \subseteq \wt{H}(\mathbb A_f).
       }
   \end{equation}
	a level structure for $L$ and $\Delta$, where $K_{Z^\BQ, \Delta}$ (resp. $K_{\Delta}$) is a sufficiently small compact open subgroup of $Z^\BQ(F_{0,\Delta})$ (resp. $\U(V)(F_{0, \Delta})$).  
 
We call  \begin{equation}\label{level for L and Delta and E0}
		{
			\wt{K^{E_0}}= K_{Z^\BQ, \Delta}^{E_0} \times K_{Z^{\mathbb Q}}^{E_0, \Delta} \times K_{\Delta}^{E_0} \times K^{E_0}(L)^{\Delta} \subseteq \wt{G}(\mathbb A_f).
       }
   \end{equation}
   a level structure for $L_{O_{E_0}}$ and $\Delta$, where $K_{Z^\BQ, \Delta}^{E_0}$ (resp. $K_{\Delta}^{E_0}$) is a sufficiently small enough compact open subgroup of $Z^\BQ(E_{0,\Delta})$ (resp. $\U(V)(E_{0, \Delta})$). 
\end{definition}	

Following \cite[Section 6.1]{RSZ-AGGP}, we introduce PEL type integral models over $O_{E^\RSZ}[\Delta^{-1}]$ for $M$ and $M^{E_0}$. For $?= F_0$ (resp. $?=E_0$), we set $O_{?,0}=O_{F_0}$ (resp. $O_{?,0}=O_{E_0}$), $O_{?}=O_{F}$ (resp. $O_{?}=O_{E}$). Now consider the functor
	$$
	\CM_0^{?} \to \Spec O_{E^\RSZ}[\Delta^{-1}]
	$$
	sending a locally noetherian scheme $S$ to the groupoid $\mathcal{M}_0^{?}(S)$ of tuples $(A_0, \iota_0, \lambda_0, \ov {\eta}_0)$ where
	\begin{itemize}
		\item $A_0$ is an abelian scheme over $S$ of dimension $[? : \mathbb Q]$.
		\item $\iota_0: O_?[\Delta^{-1}] \to \End(A_0)[\Delta^{-1}]$ is an $O_?[\Delta^{-1}]$-action satisfying the \emph{Kottwitz condition} of signature $\{(1,  0)_{\varphi \in \Phi^{?}} \}$:
		\begin{equation}
			\charpol (\iota_0(a) \mid \Lie A_0) =\prod_{\varphi \in \Phi^{?}} (T- \varphi(a)) \in \CO_S[T]. 
		\end{equation}
		\item $\lambda_0: A_0 \to A_0^\vee$ is an away-from-$\Delta$ principal polarization such that for all $a \in O_?[\Delta^{-1}]$ we have $ \lambda^{-1}_0 \circ \iota_0(a)^{\vee} \circ \lambda_0 = \iota_0(\overline{a})$.
		\item $\ov \eta_0$ is a $K_{Z^{\mathbb Q}, \Delta}^{?}$-level structure.
	\end{itemize}	

An isomorphism between two tuples is a quasi-isogeny preserving the polarization and $K_{Z^{\mathbb Q}, \Delta}$-level structure.  The functor $\mathcal{M}_0^{?}$ is representable, finite and \'etale.  We assume that $\CM_0^{?}$ is non-empty, which holds if $\Delta$ is large enough. The generic fiber $M_0^{?}$ of $\mathcal{M}_0^{?}$ is a disjoint union of copies of $\Sh_{K_{Z^\BQ}^{?}} (Z^{\mathbb Q, ?}, h_{\Phi^{?}})$, see \cite[Lemma 3.4]{RSZ-AGGP}. We work with one copy and still denote its \'etale integral model by $\CM_0^{?}$.

	\begin{definition}\label{defn: integral RSZ model}
		The integral RSZ Shimura variety with level $K^{?}$ for $L$ and $\Delta$ is the functor 
		$$
		\CM^{?} \to \Spec O_{E^\RSZ}[\Delta^{-1}]
		$$
		sending a locally noetherian scheme $S$ to the groupoid	of tuples 
		$(A_0, \iota_0, \lambda_0, \ov \eta_0,  A, \iota, \lambda, \overline{\eta}_{\Delta})$ where
		\begin{itemize}
			\item $(A_0, \iota_0, \lambda_0, \ov \eta_0)$ is an object of $\CM_0^{?}(S)$.
			\item $A$ is an abelian scheme over $S$ of dimension $n[?:\BQ]$.
			\item $\iota: O_?[\Delta^{-1}] \to \End(A)[\Delta^{-1}]$ is an $O_?[\Delta^{-1}]$-action satisfying the \emph{Kottwitz condition} of the signature of $V_{?}$.
			\item $\lambda : A \to A^{\vee}$ is a prime-to-$\Delta$ principal polarization such that for all $a \in O_?[\Delta^{-1}]$ we have $ \lambda^{-1} \circ \iota(a)^{\vee} \circ \lambda= \iota(\overline{a}). $
			\item $\overline{\eta}_{\Delta}$ is a $K_{\Delta}^{?}$-orbit of isometries of hermitian modules 
			\[
			\eta_{\Delta}: V_{\Delta}(A_0, A)= \prod_{v \in \Delta} V_{v}(A_0, A) \cong V_{?,\Delta}
			\]
			as smooth $F_{0, \Delta}$-sheaves on $S$. Here 
			$V_{v}(A_0, A)=\Hom_{F \otimes_{F_0} F_{0, v}} (V_vA_0, V_vA)$ is the Hom space between rational Tate modules of $A_0$ and $A$, with the hermitian form
			\[
			(x, y):= \lambda_0^{-1} \circ y^\vee \circ \lambda \circ x.
			\]
			\item For any finite place $v \not \in \Delta$ of $F_0$, we put the sign condition and Eisenstein condition at $v$ as in \cite[Section 4.1]{RSZ-AGGP}. By \cite[Section 4.1]{RSZ-AGGP} and \cite[Rem. 6.5 (i)]{RSZ-AGGP}, the Eisenstein condition is automatic when the places of $? \otimes_{F_0} F$ above $v$ are unramified over $p$, and the sign condition is automatic when $v$ is split in $F$. 
		\end{itemize}
		An isomorphism between two tuples is a pair of quasi-isogenies $(\phi_0, \phi): (A_0, A) \to (A_0', A')$ preserving polarizations and the $K_{Z^\BQ, \Delta}\times K_{\Delta}$-orbit of level structures.
	\end{definition}
	
	\begin{theorem} 
		The functor $\CM^?$ is representable by a separated scheme flat and of finite type over $\Spec O_E^\RSZ[\Delta^{-1}]$. Moreover, $\CM^?$ is smooth and projective over $\Spec O_{E^\RSZ}[\Delta^{-1}]$.
	\end{theorem}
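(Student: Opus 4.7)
The plan is to verify the four properties (representability, flat and finite type, smoothness, projectivity) in this order, following the standard playbook for PEL-type moduli as in Kottwitz and Rapoport--Zink, adapted to the RSZ formulation as in \cite{RSZ-AGGP}.

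\textbf{Representability and finite type.} First I would forget the supplementary structures $(A_0, \iota_0, \lambda_0, \bar\eta_0)$ and the level-$\Delta$ structure, and exhibit the underlying data $(A, \lambda, \text{level at } \Delta)$ as a point of a Mumford moduli stack $\CA_{g,d,N}$ of polarized abelian schemes with level-$N$ structure, where $g = n[?:\BQ]$ and $N$ is auxiliary (chosen so that $K_\Delta$ contains a principal level-$N$ subgroup, using neatness). The $O_?[\Delta^{-1}]$-action is a closed condition (fixed set of endomorphisms), the Rosati compatibility $\lambda^{-1}\iota(a)^\vee\lambda = \iota(\bar a)$ is closed, and the Kottwitz signature condition is closed in the moduli of such polarized abelian schemes by the determinantal condition on $\Lie A$. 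Adding the factor $\CM_0^?$, which is finite \'etale over the base, and imposing the hermitian isomorphism $\eta_\Delta$ (a locally closed condition compatible with taking $K_\Delta$-orbits), produces $\CM^?$ as a relatively representable subfunctor of a quasi-projective moduli scheme. Combined with the boundedness built into level plus polarization, this yields representability by a separated scheme of finite type.

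\textbf{Flatness and smoothness.} Since $\CM_0^?$ is finite \'etale over $\Spec O_{E^\RSZ}[\Delta^{-1}]$, it suffices to prove $\CM^? \to \CM_0^?$ is smooth of the expected relative dimension. I would apply Grothendieck--Messing / Serre--Tate deformation theory place by place: for any geometric point of $\CM^?$ over a residue characteristic $v \notin \Delta$, deformations of $(A,\iota,\lambda)$ are controlled by liftings of the Hodge filtration inside the crystal $\BD(A)$ compatible with the $O_?\otimes \CO_S$-action. Because $?\otimes_{F_0} F$ is unramified at $v$, the module $\Lie A$ decomposes according to the $O_{?,v}\otimes \CO_S$-eigenspaces, and in each eigenspace the Kottwitz signature condition cuts out a Grassmannian of the prescribed rank (the sign and Eisenstein conditions are automatic in the unramified case by \cite[Rem.~6.5]{RSZ-AGGP}). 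Such a Grassmannian local model is smooth, so the infinitesimal deformation functor is formally smooth of the correct dimension. This simultaneously gives flatness and smoothness.

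\textbf{Projectivity.} I would prove properness by the valuative criterion reduced to the generic fiber. On the generic fiber $M \times_{F^\RSZ} E^\RSZ \hookrightarrow M^{E_0}$, properness holds because the derived group of $\Res_{F_0/\BQ}\U(V)$ (respectively $\Res_{E_0/\BQ}\U(V_{E_0})$) is $\BQ$-anisotropic: indeed, since $[F_0:\BQ]>1$ and $V$ has signature $(n,0)$ at some archimedean place of $F_0$, the hermitian form on $V$ is positive definite there, hence $V$ has no isotropic $F$-vectors, which forces $\U(V)$ to be $F_0$-anisotropic. The corresponding statement for $V_{E_0}$ follows identically using a split place in $\Phi_{E_0}$. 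Properness of the integral model then follows from properness of the generic fiber together with the valuative criterion applied to the N\'eron--Ogg--Shafarevich extension property of the polarized abelian scheme with its $O_?$-action (the prime-to-$\Delta$ polarization and level structure prevent degeneration). Quasi-projectivity is automatic from the existence of a $\lambda$-induced ample line bundle on the universal abelian scheme (a power of the Hodge bundle), so properness upgrades to projectivity.

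\textbf{Expected main obstacle.} The only nontrivial step is the smoothness analysis at primes $v \notin \Delta$ where $v$ is inert or ramified in $F/F_0$ (but $F/F_0$ is unramified outside $\Delta$, so only inert remains); here the signature at the two places of $?\otimes F$ above $v$ can be asymmetric (e.g.\ $(n-1,1)$ at one place and $(n,0)$ at the conjugate place after base change to $E$), and one must carefully check that the sign condition of \cite[\S4.1]{RSZ-AGGP} is compatible with the Grothendieck--Messing deformation. This is where the precise statement of \cite[Rem.~6.5(i)]{RSZ-AGGP}, combined with the unramifiedness of $\Phi$ and of $E_0/F_0$ outside $\Delta$, is indispensable.
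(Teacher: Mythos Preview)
Your proposal is correct: representability via the standard PEL toolkit, smoothness via Grothendieck--Messing at primes outside $\Delta$ (where everything is unramified and self-dual so the local model is a Grassmannian), and projectivity via anisotropy of $\U(V)$ (respectively $\U(V_{E_0})$) coming from the existence of a definite archimedean place when $[F_0:\BQ]>1$.

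The paper, by contrast, does not reprove any of this: it simply invokes \cite[Theorem~6.2]{RSZ-Shimura}, where exactly this result is established for the RSZ integral models under the running hypotheses on $\Delta$, and then only adds the one-line observation that unramifiedness of $E^{\RSZ}/F^{\RSZ}$ outside $\Delta$ preserves smoothness after the base change to $O_{E^{\RSZ}}[\Delta^{-1}]$. So your write-up is essentially a sketch of what lies behind that citation rather than an alternative argument. One small wording issue: your phrase ``a split place in $\Phi_{E_0}$'' in the projectivity paragraph is unclear; you mean an archimedean place of $E_0$ at which $V_{E_0}$ has signature $(n,0)$, which exists since $[E_0:\BQ]\ge 4$ while only two archimedean places carry signature $(n-1,1)$.
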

\begin{proof}
   This follows from \cite[Theorem 6.2]{RSZ-Shimura}. Because of our assumption on $\Delta$, $E^\RSZ / F^\RSZ$ is also unramified away form $\Delta$, hence $\CM^?$ is still smooth over $\Spec O_{E^\RSZ}[\Delta^{-1}]$.
\end{proof}
 
	From now on, for simplicity we write $(A_0, A, \ov\eta)$ (resp. $A_{0}^{E_0}, A^{E_0}, \ov \eta^{E_0}$) for a $S$-point of $\CM$ (resp. $\CM^{E_0}$). Serre tensor construction gives a natural embedding 
 \begin{equation}
     \CM \to \CM^{E_0}, \, \, (A_0, A ,\ov \eta) \mapsto (A_0 \otimes_{O_{F}} O_{E}, A \otimes_{O_{F}} O_{E}, \ov \eta \otimes_{F} E ).
 \end{equation}
	
which recovers the embedding of Shimura varieties (\ref{twisted embedding: generic fiber}) over $E^\RSZ$.

\subsection{Arithmetic theta series of Kudla--Rapoport divisors} \label{section: arithmetic theta}

\subsubsection{On the unitary Shimura variety $M$}
For any $(A_0, A, \ov\eta) \in M(S)$, endow the $F$-vector space $\Hom^\circ_F(A_0,A)=\Hom_{O_F}(A_0,A)\otimes \BQ$ with the hermitian form
	\begin{equation}
		(u_1,u_2)
		:= \lambda_0^{-1}\circ u_2^\vee\circ \lambda \circ u_1 \in \End ^\circ_F(A_0)\simeq F.
	\end{equation}
	
	\begin{definition}\label{def:KR generic fiber}
		Consider non-zero $\xi\in F_{0}$ and $\mu\in  V(\BA_{0,f}) /K$.	The Kudla--Rapoport cycle $Z(\xi, \mu)$ is the functor sending any locally noetherian $E$-scheme $S$ to the groupoid of tuples
		$(A_0, A, \ov\eta, u)$ where
		\begin{itemize}
			\item  $(A_0, A, \ov \eta) \in M(S)$.
			\item  $u\in \Hom^\circ_F(A_0,A)$ such that $(u,u)=\xi$.
			\item $\ov\eta (u)$ is in the $K$-orbit $\mu$. 
		\end{itemize}
		An isomorphism between two tuples $(A_0, A, \ov\eta, u)$ and $(A_0', A', \ov\eta', u')$ is an isomorphism of tuples $(\phi_0, \phi): (A_0, A, \ov\eta) \simeq (A_0', A', \ov\eta') $ such that $\phi \circ u= u' \circ \phi_0$.
	\end{definition}
	
	By positivity of the Rosati involution, $Z(\xi, \mu) $ is empty unless $\xi \in F_{0,+}$ is totally positive. The natural forgetful morphism $i: Z(\xi, \mu) \to M$ is finite and unramified, and is \'etale locally a Cartier divisor. We view $Z(\xi, \mu)$ as elements in the Chow group (of $\BQ$-coefficients) $\Ch^1(M)$. 
 
    For a Schwartz function $\phi \in \CS(V(\BA_{0,f}))^K$ and $\xi \in F_{0,+}$, the $\phi$-averaged Kudla--Rapoport divisor is the finite summation 
	\begin{equation}\label{KR generic average}
		Z(\xi, \phi)\colon=\sum_{\mu\in V_\xi(\BA_{0,f})/K} \phi(\mu)\,Z(\xi, \mu) \in \Ch^1(M).
	\end{equation}
 
 Here $V_{\xi}=\{x \in V| (x,x)= \xi \}$ is the hyperboloid in $V$ of length $\xi$.  We put 
	\[ 
	Z(0, \phi) := -\phi(0) c_1(\omega)  \in \Ch^1(M)
	\]
	where $\omega$ is the automorphic line bundle (Hodge bundle) on $M$, which is a descent of the tautological line bundle on the hermitian symmetric domain $X$ of $M$ to $E$.

For $\xi \in F_0$, consider the weight $n$ Whittaker function (\ref{defn: Whittaker SL2}) on $\SL_2(F_{0, \infty})$:
	$$W^{(n)}_\xi (h_{\infty})=\prod_{v | \infty}  W^{(n)}_\xi (h_{v}).
	$$ Consider the Weil representation $\omega=\omega_{V,\psi}$ of $\SL_2(\mathbb A_{0,f})$ on $\CS(V(\BA_{0,f}))$ which commutes with the natural action of $\U(V)(\BA_{0,f})$.  For a Schwartz function $\phi \in \CS(V(\BA_{0,f}))^K$, we form \emph{the geometric generating series of Kudla--Rapoport divisors} on $M$ by
	\begin{equation}\label{eq: generating series generic fiber}
		Z(h, \phi) :=W^{(n)}_0(h_{\infty}) Z(0, \omega(h_f) \phi ) + \sum_{\xi \in F_{0, +}} W^{(n)}_\xi (h_{\infty})  Z(\xi, \omega(h_f) \phi )\in \Ch^1(M),
	\end{equation}
	for any $h=(h_\infty, h_f) \in \SL_2(\BA_{F_0})=\SL_2(F_{0, \infty}) \times \SL_2(\BA_{0, f})$. For any $h_f \in \SL_2(\mathbb A_{0,f})$, we have 
	\begin{equation}\label{eq: dual relation on generic fiber}
		Z(h, \omega(h_f)\phi)=Z(hh_f, \phi). 
	\end{equation}	

 The modularity of geometric theta series of divisors is known by the thesis work of Yifeng Liu \cite{Liu-Thesis}.
	    
	\begin{proposition}\cite[Thm. 8.1]{AFL-Wei2019}\label{modularity over generic fiber} 
		For any $\phi \in \CS(V(\BA_{0,f}))^K$, $Z(h, \phi)$ is a weight $n$ holomorphic automorphic form valued in $\Ch^1(M_{\wt H, \wt K})$. More precisely, we have
		$$Z(h, \phi) \in \CA_{\rm hol}(\SL_2(\BA_{F_0}), K_0, n)_{\ov \BQ} \otimes_{\ov \BQ} \Ch^1(M)_{\ov \BQ}
  $$
		where $K_0 \subseteq \SL_2(\BA_{0,f})$ is any compact open subgroup that fixes $\phi$ under the Weil representation.
	\end{proposition}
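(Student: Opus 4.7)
The plan is to deduce this statement directly from Yifeng Liu's modularity theorem for Kudla--Rapoport generating series on standard unitary Shimura varieties \cite{Liu-Thesis}. The key observation is the product decomposition \eqref{eq: Shimura=Shimura x Shimura}, which writes $M$ as the product of the auxiliary $0$-dimensional CM Shimura variety $\Sh_{K_{Z^\BQ}}(Z^\BQ, h_\Phi)$ with the classical unitary Shimura variety $\Sh_K(H,\{h_H\})$. Since the CM factor contributes nothing to $\Ch^1$, both the Kudla--Rapoport divisors $Z(\xi,\mu)$ and the Hodge class $c_1(\omega)$ pull back, component by component, from the unitary factor, so modularity of $Z(h,\phi)$ on $M$ reduces to modularity on $\Sh_K(H,\{h_H\})$.

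Concretely, first I would fix a connected component of $M_0$, corresponding to a choice of CM tuple $(A_0,\iota_0,\lambda_0,\ov\eta_0)$, and identify the resulting component of $M$ with a suitable level cover of $\Sh_K(H,\{h_H\})$. Under this identification, the $F$-hermitian module $\Hom^\circ_F(A_0,A)$ together with its Rosati form corresponds, after trivialization of the $1$-dimensional CM factor, to the space of special quasi-homomorphisms used in Liu's setup, and the moduli problem defining $Z(\xi,\mu)$ translates into Liu's Kudla--Rapoport moduli problem with the same parameter $\xi$ and level class $\mu$. Consequently $Z(\xi,\phi)$ and $Z(0,\phi)=-\phi(0)c_1(\omega)$ match, component by component, with the corresponding quantities in Liu's setup.

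Second, I invoke Liu's theorem to conclude that the generating series on $\Sh_K(H,\{h_H\})$ is a parallel weight $n$ holomorphic automorphic form on $\SL_2(\BA_{F_0})$ valued in $\Ch^1_{\ov\BQ}$, hence so is $Z(h,\phi)$ on $M$. Right invariance under a compact open $K_0\subseteq \SL_2(\BA_{0,f})$ fixing $\phi$ follows from \eqref{eq: dual relation on generic fiber}, which identifies right translation of $h_f$ with the Weil action on $\phi$. The weight $n$ transformation and holomorphy are built into the definition via $W^{(n)}_\xi(h_\infty)$, while $\ov\BQ$-rationality of Fourier coefficients follows from the fact that $Z(\xi,\mu)$ and $\omega$ are defined over the reflex field $E^\RSZ\subseteq \ov\BQ$.

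The main obstacle in a careful write-up is the bookkeeping of the identification between the RSZ Kudla--Rapoport data and the classical ones on $\Sh_K(H,\{h_H\})$: verifying that the hermitian form on $\Hom^\circ_F(A_0,A)$ coming from the Rosati involutions matches, up to a scalar of the CM-line, the hermitian form on $V$ used in Liu's definitions, and that the away-from-$\Delta$ level structures $\ov\eta_\Delta$ translate correctly into the compact open $K$. Once this dictionary is in place, no further analytic input is required beyond citing \cite{Liu-Thesis}.
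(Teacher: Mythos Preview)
The paper does not give its own proof of this proposition: it simply cites \cite[Thm.~8.1]{AFL-Wei2019} in the statement header, and immediately before the proposition remarks that ``the modularity of geometric theta series of divisors is known by the thesis work of Yifeng Liu \cite{Liu-Thesis}.'' Your proposal is correct and supplies precisely the reduction (via the product decomposition \eqref{eq: Shimura=Shimura x Shimura} and the dictionary between RSZ and classical Kudla--Rapoport data) that underlies the cited result, so the approach is essentially the same --- you are just making explicit what the paper leaves to the reference.
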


\subsubsection{On integral model $\CM$}

To do arithmetic intersection, we consider the arithmetic theta series on $\CM$. Now $K$ is a level structure for $L$ and $\Delta$ (Definition (\ref{level for L})).
	
\begin{definition}\label{defn: global int KR}
		Let $\xi\in F_{0,+}$ and $\mu_\Delta \in V(F_{0,\Delta})/K_{\Delta}$.  \emph{The Kudla--Rapoport cycle}  $\CZ(\xi, \mu_{\Delta}) \to \CM$ is the functor sending any locally noetherian scheme $S$ over $O_{E^\RSZ}[1/\Delta]$ to the groupoid of tuples $(A_0, A, \ov\eta, u)$ where
		\begin{itemize}
			\item  $(A_0, A, \ov\eta) \in\CM(S)$;
			\item  $u \in \Hom_{O_F}(A_0,A)[1/\Delta]$ with $(u,u)=\xi$.
			\item $\ov\eta (u)$ is in the $K_{\Delta}$-orbit $\mu_{\Delta}$. 
		\end{itemize}
	\end{definition} 

\begin{definition}
A Schwartz function $\phi \in \CS(V(\BA_f))^{K}$ is standard for $L$, if $\phi=\phi_{\Delta} \otimes \phi^{\Delta}$ where where $\phi^\Delta=1_{\wh{L}^\Delta}$ agrees with the indicator of $L$ in $\CS(V(\BA_f^\Delta))^K$ away from $\Delta$. 
\end{definition}	
Consider $\phi \in \CS(V(\BA_{0,f}))^{K}$ that is standard for $L$. For $\xi \in F_{0,+}$, the $\phi$-averaged Kudla--Rapoport divisor is the finite summation
	\begin{equation}\label{def:KR int}
		\CZ(\xi, \phi)\colon=\sum_{\mu_{\Delta} \in V_\xi(F_{0,\Delta})/K_{\Delta}} \phi_\Delta(\mu_{\Delta})\,\CZ(\xi, \mu_{\Delta}),
	\end{equation}
	viewed as elements in the Chow group $\Ch^1(\CM)$.  
	
Let $\mu$ be an archimedean place of $E^\RSZ$. we have two kinds of Green functions \cite[Section 3]{AFL-Wei2019} for the Kudla--Rapoport divisor $Z(\xi,\phi)_\BC$ on $M \otimes_{E^\RSZ, \nu}\BC$:
	\begin{itemize}
		\item The \emph{Kudla Green function} $\CG^{\bf K}(\xi,h_{\infty},\phi)$ with a variable $h_\infty \in \SL_2(F_{0, \infty})$, which occurs in the analytic side naturally via computations of archimedean orbital integrals. 
		\item The \emph{automorphic Green function} \cite{JBGreen} $\CG^{\bf B}(\xi, h_\infty, \phi)=\CG^{\bf B}(\xi,\phi)$ for $\xi \in F_{0, +}$ which is admissible by \cite[Cor. 5.16]{JBGreen}. 
	\end{itemize} 

We consider \emph{arithmetic Kudla--Rapoport divisors} ($?=\bf K, \bf B$)
 \begin{equation}
 	\wh \CZ^{?} (\xi, h_\infty, \phi)= (\CZ (\xi, \phi), ( \mathcal{G}^{?}_{\nu}(\xi, h_\nu, \phi))_{\nu} ) \in \wh{\Ch}^1(\CM).  
 \end{equation}
 
\subsection{Derived twisted CM cycles}

We use the twisted diagonal embedding to define derived twisted CM cycles on $\CM$.

\begin{definition}\label{defn: Hecke correspondence away from Delta}
For $\mu_{\Delta}^{E_0} \in K_\Delta^{E_0} \backslash \U(V)(E_{0, \Delta}) / K_\Delta^{E_0}$, the \emph{Hecke correspondence away from $\Delta$} 
 $$
	\Hk_{\mu_{\Delta}^{E_0}} \to \CM^{E_0} \times \CM^{E_0}
$$ 
is the functor sending any locally noetherian $O_{E^\RSZ}[\Delta^{-1}]$-scheme $S$ 
to the groupoid of tuples $(A_0^{E_0}, A_1^{E_0}, \ov \eta_{A_1^{E_0}}, A_2^{E_0}, \ov \eta_{A_2^{E_0}}, \varphi)$ where
		
		\begin{enumerate}
			\item $(A_0^{E_0}, A_1^{E_0}, \ov \eta_{A_1^{E_0}}), \, (A_0, A_2^{E_0}, \ov \eta_{A_2^{E_0}}) \in \CM^{E_0}(S)$.
			\item $\varphi \in \Hom_{O_E}(A_1^{E_0}, A_2^{E_0})[1/\Delta]$ is a prime-to-$\Delta$ quasi-isogeny such that $\varphi^*\lambda_2=\lambda_1$.
			\item  We have $\eta_{A_2^{E_0}} \varphi \eta_{A_1^{E_0}}^{-1} \in \mu_{\Delta}^{E_0} \subseteq \U(V)(E_{0,\Delta}) $ for some $\eta_{A_1^{E_0}} \in \ov \eta_{A_1^{E_0}}$ and $\eta_{A_2^{E_0}} \in \ov \eta_{A_2^{E_0}}.$
		\end{enumerate}
\end{definition}

Recall above $\eta_{A_i^{E_0}}$ ($i=1,2$) is an isometry of hermitian modules $\eta_{A_i^{E_0}}: V_\Delta(A_0, A) \cong V_{E_0, \Delta}$. So naturally we have $\eta_{A_2^{E_0}} \varphi \eta_{A_1^{E-0}}^{-1} \in \U(V)(E_{0,\Delta})$. Recall we have an embedding $H=\Res_{F_0/\BQ} \U(V) \leq G=\Res_{E_0/\BQ} \U(V_{E_0})$. We now pullback these Hecke correspondences from $G \times G$ to $H \times H$ via Serre tensor constructions and define the \emph{twisted CM cycles}.

Recall over $E^{\RSZ}$, we may attach a complex point $(A_0^{E_0}, A^{E_0}, \ov{\eta}) \in M^{E_0}(\BC)$ a $F/F_0$ hermitian space via Betti cohomology
\[
V(A_0^{E_0}, A^{E_0}):= \Hom_{F}(H_1(A_0, \BQ), H_1(A, \BQ))
\]
The existence of $\ov \eta$ implies that we have an isomorphism $V(A_0^{E_0},A^{E_0}) \cong V_{E_0}$ as hermitian spaces.

\begin{definition}
For an element $\ov{\alpha} \in \U(V)(F_0) \backslash \U(V)(E_0) / \U(V)(F_0)$, the twisted CM cycle for $\alpha$ and $\mu_\Delta^{E_0}$ over $E^\RSZ$
\begin{equation}
CM^{E_0}(\ov{\alpha}, \mu^{E_0}_\Delta) \to M \times M   
\end{equation}
is the functor sending any locally noetherian $E^\RSZ$-scheme $S$ 
to the groupoid of tuples $(A_0, A_1, \ov \eta_{A_1}, A_2, \ov \eta_{A_2}, \varphi)$ where 
\begin{enumerate}
    \item $(A_0, A_1, \ov \eta_{A_1}), \, (A_0, A_2, \ov \eta_{A_2}) \in M(S)$.
    \item $\varphi \in \Hom_{O_E}(A_1 \otimes_{O_F} O_{E}, A_2 \otimes_{O_F} O_{E} )[1/\Delta]$ is a prime-to-$\Delta$ quasi-isogeny such that $\varphi^*\lambda_2=\lambda_1$.
	\item The induced map $\varphi_*: H_1(A_1^{E_0}, \BQ) \cong H_1(A_2^{E_0}, \BQ)$ on each complex point transforms to $\alpha \in \U(V_{E_0})$ for some $\alpha \in \ov{\alpha}$, under an identification $V(A_0^{E_0}, A^{E_0}_i) \cong V_{E_0}$ of hermitian spaces that induces $\eta_{A_i}: V_{\Delta}(A_0^{E_0}, A^{E_0}) \cong V_{E_0, \Delta}$ for some $\eta_{A_i^{E_0}} \in \ov{\eta}_{A_i^{E_0}}$.
\end{enumerate}
\end{definition}

\begin{remark}
If $E_0=F_0 \times F_0$, then by \cite[(2.11.))]{AFL-Wei2019} the categorical quotient $\U(V) \backslash \U(V_{E_0}) / \U(V)$ over $F_0$, could be identified via characteristic polynomials with an affine subscheme $\mathcal{A}_n \subseteq \Res_{F/F_0} F[T]_{\deg =n}$ consist of conjugate self-reciprocal monic polynomials.
\end{remark}

By considering the relative position of $\varphi$ over generic fibers, we have a decomposition of Hecke cycles into \emph{twisted CM cycles}
\begin{equation}
(\CM \times \CM ) \cap_{\CM^{E_0} \times \CM^{E_0}} \Hk_{\mu^\Delta} = \coprod_{\ov{\alpha}} \mathcal{CM}^{E_0}(\ov{\alpha}, \mu^{E_0}_\Delta)
\end{equation}
where $\ov{\alpha}$ runs over $\U(V)(F_0) \backslash \U(V)(E_0) / \U(V)(F_0)$, such that $\mathcal{CM}^{E_0}(\ov{\alpha}, \mu^{E_0}_\Delta)_{E^\RSZ}=CM^{E_0}(\ov{\alpha}, \mu^{E_0}_\Delta)$.

\begin{definition}
We form the derived intersection in the K-group of $\CM^{E_0}$ and decomposition as above:
\[
(\CM \times \CM ) \cap^\BL_{\CM^{E_0} \times \CM^{E_0}} \Hk_{\mu^\Delta} = \coprod_{\alpha} {}^\BL \mathcal{CM}^{E_0}(\ov \alpha, \mu^{E_0}_\Delta),
\]
and ${}^\BL \mathcal{CM}^{E_0}(\ov \alpha, \mu^{E_0}_\Delta) \in F_1'K'_{0,\CM^{E_0}(\ov \alpha, \mu^{E_0}_\Delta)}(\CM \times \CM)$ is called the derived twisted CM cycle for $(\ov \alpha, \mu^{E_0}_\Delta)$.
\end{definition}

\begin{definition}\label{defn: irreducible in (H, G, H)}
An element $\ov{\alpha} \in \U(V)(F_0) \backslash \U(V)(E_0) / \U(V)(F_0)$ is called irreducible over $F$, if its image in $ \GL(V) \backslash \GL(V_{E_0}) / \GL(V) $ contains some element $\alpha^{E_0}$ whose characteristic polynomial over $E$ is irreducible, i.e. $E[\alpha^{E_0}]$ is a field.
\end{definition}

By the theory of complex multiplication (for $A^{E_0}$), if $\ov{\alpha}$ is irreducible, then $^{\BL}\mathcal{CM}^{E_0}(\alpha, \mu^{E_0}_\Delta)$ is $1$-dimensional away from finitely many primes with $0$-dimensional generic fiber $CM^{E_0}(\alpha, \mu^{E_0}_\Delta)$ over $E^\RSZ$. See also \cite[Prop. 7.9]{AFL-Wei2019}.

\begin{remark}
It is possible to give moduli interpretations of integral twisted CM cycles $\mathcal{CM}^{E_0}(\ov \alpha, \mu^{E_0}_\Delta)$  without using Hecke correspondences, by requiring the induced map 
$\eta_{A_2^{E_0}} \varphi_* \eta_{A_1^{E_0}}^{-1}: V_\Delta \cong V_\Delta(A_0^{E_0}, A^{E_0}_1) \to V_\Delta(A_0^{E_0}, A^{E_0}_2) \cong V_\Delta $ agrees with the image of $\alpha$ in $\U(V)(F_{0,\Delta}) \backslash \U(V)(E_{0,\Delta}) / H(F_{0,\Delta})$. This agrees with the above definition because we put the sign condition on integral model $\CM$ using sign invariants as in \cite[Section 4.1]{RSZ-Shimura}. For our purpose, knowing the information on generic fibers already gives the desired local-global decomposition of global intersection numbers into local orbital integrals and intersection numbers.
\end{remark}

Let $\mathcal C_1(\CM)$ be the $\BQ$-vector space spanned by $1$-cycles on $\CM$, and $\mathcal C_1(\CM)_{\sim}$ be the quotient of $\mathcal C_1(\CM)$ by the subspace generated by $1$-cycles on $\CM$ that are contained in a closed fiber and rationally trivial within that fiber.
	
\begin{definition}[derived twisted CM cycles] \label{defn: twisted Derived CM}
	For a Schwartz function $f=f_\Delta \otimes 1_{(K^{E_0})^\Delta} \in \CS(G(\BA_f))^{K^{E_0} \times K^{E_0}}$, the derived twisted CM cycle for $f$ is the finite summation
		\begin{equation}
			{}^{\BL}\mathcal{CM}^{E_0}(\alpha, f)= \sum_{\mu_{\Delta}^{E_0} \in K^{E_0}_{\Delta} \backslash \U(V)(E_{0,\Delta}) / K^{E_0}_{\Delta} } f_\Delta (\mu_\Delta^{E_0}) 
			^{\BL}\mathcal{CM}^{E_0}(\alpha, \mu^{E_0}_\Delta) \in \CC_1(\CM)_{\sim}.
		\end{equation}
Here we regard $^{\BL}\mathcal{CM}^{E_0}(\alpha, \mu^{E_0}_\Delta) \in \CC_1(\CM)_{\sim}$ via the the projection map $\mathrm{pr}_1: \CM \times \CM \to \CM$ to the first factor.
\end{definition}

\section{Local--global decomposition formulas and transfers}\label{section: local-global}			

In this section, we globalize two sides of the twisted AFL. Let $\xi \in F_{0}$. For $\phi=\phi_\Delta \otimes 1_{\wh{L}^\Delta} \in \CS(V(\BA_f))^{K}$ and $f=f_\Delta \otimes 1_{K^E_0}  \in \CS(\U(V_{E_0})(\BA_f))^{K_{E_0} \times K_{E_0}}$, we consider arithmetic intersection number
$\Int(\xi, f \otimes \phi)$ and study its local-global decomposition formulas following the argument of \cite[Theorem 3.9]{AFL-Invent} \cite[Theorem 8.15]{RSZ-AGGP}. 

\subsection{Geometric generating functions}

We globalize the geometric side in Section \ref{section: TAFL}.  Consider the quotient $\BQ$-vector space 
$$
\BR_\Delta:=\BR \, / \sum_{v|\ell, v \in \Delta}{ \BQ \log \ell}.
$$
Consider the $\Delta$-punctured arithmetic intersection pairing between $\BQ$-vector spaces:
	\begin{equation}\label{eq: Truncated Arithmetic int pairing}
		(\cdot,\cdot): \wh\Ch^1(\CM)\times \CC_{1}(\CM)  \to \BR_\Delta.
	\end{equation}   
  
Recall the volume factor $\tau(Z^\BQ)=\#Z^\BQ(\BQ) \backslash Z^\BQ(\BA_f)$.

 \begin{definition}[$\xi$-part of global arithmetic intersection numbers] \label{defn: global arithmetic intersection numbers}
For $\xi \in F_0$ and $\Phi=f \otimes \phi \in \CS((\U(V_{E_0}) \times V)(\BA_{0,f}))^{(K_{E_0} \times K_{E_0}) \times K}$ as above, we define \emph{global arithmetic intersection numbers}
	\begin{equation}
		\Int^{?}(\xi, \Phi) := \frac
		{1}{\tau(Z^{\BQ}) [E^\RSZ:F]}
		(\wh \CZ^{?} (\xi, \phi), {}^{\BL}\mathcal{CM}^{E_0}(\alpha, f)) \in \BR_\Delta. 
	\end{equation}
We set $\Int(\xi, \Phi)=\Int^{\bf B}(\xi, \Phi)$. 
 \end{definition}

If $\xi \not =0$, we may lift $\Int(\xi, \Phi)$ to $\BR$ using the physical divisor $\wh \CZ^{\bf B} (\xi, \phi)$. The global intersection decomposes into semi-global intersection numbers at each local place $v$ of $F_0$:
\begin{equation}\label{global int=sum of semi-global int at each v}
\Int(\xi, \Phi)=\sum_{v} \Int_v(\xi, \Phi).
\end{equation}
We now connect the local intersection pairing $\Int_v(\xi, \Phi)$ via basic uniformization to our local set up for the twisted AFL.

\subsection{Basic uniformization of special cycles at inert places}

Assume that $v_0 \not \in \Delta$ is inert in $F$. 
Let $V^{(v_0)}$ be the nearby $F/F_0$-hermitian space of $V$ at $v_0$, which is positively definite at all $v | \infty$ and locally isomorphic to $V$ at all finite places $v \not = v_0$ of $F_0$.

Let $\nu$ be a finite place of $E^\RSZ$ with $\nu|_{F}=w$, $\nu|_{F_0}=v_0$. By our assumption that $\Phi$ is unramifeid over $p$, we have $\breve{E}^\RSZ_\nu=\breve{F_{w}}=\breve{F_{0,v}}$.

Denote by $\CM^{\wedge}$ be the formal completion of $\CM_{O_{\breve{E}^\RSZ_\nu}}$ along the basic locus of the geometric special fiber $\CM \otimes \BF_{\nu}$.  The formal scheme $\CM^{\wedge}$ is formally smooth and \emph{formally of finite type} over $O_{\breve{E}^\RSZ_\nu}$. Consider the (relative) unitary Rapoport--Zink space 
$$
 \CN_n \to \Spf O_{\breve{F}}$$
(associated to $L_{v_0}$) as in Section \ref{section: TAFL}. 

 Consider reductive groups $H^{(v_0)}:=\U(V^{(v_0)})$ over $F_0$ and $\wt{H}^{(v_0)}:=Z^{\BQ} \times \Res_{F_0/\BQ} \U(V^{(v_0)})$ over $\BQ$.  We have the following basic uniformization theorem.

	\begin{proposition}\label{prop: basic uniformization RSZ}
		There is a natural isomorphism of formal schemes over $O_{\breve{E}^\RSZ_\nu}=O_{\breve{F_0}}$:
		\begin{equation}\label{eq: basic basic unif}
			\Theta: \, \, \CM^{\wedge} \stackrel{\sim}{\rightarrow}  \wt{H}^{(v_0)}(\BQ) \backslash ( \CN^{\prime} \times \wt{H} (\BA_{f}^{p}) / \wt{K}^{p} ), 
		\end{equation}  
		where 
		\[ \mathcal{N}^{\prime} \simeq (Z^{\BQ} \left(\mathbb{Q}_{p}\right) / K_{Z^{\BQ}, p} ) \times \mathcal{N}_n \times \prod_{v|p, v \not = v_0} \U(V)(F_{0, v}) / K_{v}. \]
		There is a natural projection map
		\begin{equation}\label{projection map from RSZ to unitary}
			\CM^{\wedge}  
			\to Z^{\BQ}(\BQ)  \backslash (Z^{\BQ}(\BA_{0, f}) / K_{Z^\BQ} ) 
		\end{equation}  
		with fibers isomorphic to
		\[ \Theta_0: \CM^{\wedge}_0 \stackrel{\sim}{\rightarrow}  H^{(v_0)}(F_0) \backslash [ \mathcal{N}_n  \times H (\mathbb{A}_{0, f}^{v_0}) / K^{v_0} ]. \] 
	\end{proposition}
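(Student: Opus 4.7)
The plan is to apply a Rapoport--Zink style $p$-adic uniformization theorem to the PEL-type integral model $\CM$ of Section \ref{subsection: integral model}, and then repackage the resulting double coset description into the form stated, taking account of the auxiliary factor $\CM_0$ used in the RSZ construction. The key inputs are Drinfeld--Serre--Tate rigidity of quasi-isogenies, Honda--Tate theory, and the explicit computation of $J_b$ at the basic locus.

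The first step is to produce a framing object. Because $v_0 \not\in \Delta$ is inert in $F$ and the signature of $V$ at $\varphi_0$ is $(n-1,1)$, the PEL datum at $v_0$ is of unramified unitary type with Kottwitz invariant forcing the basic Newton stratum in $\CM \otimes \BF_{\nu}$ to be non-empty. I fix a base point $\bar{x}_0=(A_{0,\bar x_0}, A_{\bar x_0}, \ov\eta_{\bar x_0}) \in \CM(\ov \BF_{\nu})$ in this basic locus. The $p$-divisible group $A_{\bar x_0}[p^{\infty}]$ decomposes according to the places of $F_0$ above $p$; at $v_0$ it yields (after rigidifying) the framing tuple $(\BX,\iota_{\BX},\lambda_{\BX})$ of Section \ref{section: special cycles and RZ}, while at $v|p$, $v\neq v_0$, the local unitary datum is hyperspecial so the corresponding Rapoport--Zink space is discrete and a torsor under $\U(V)(F_{0,v})/K_v$. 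The factor $A_{0,\bar x_0}[p^\infty]$ contributes a CM-formal group whose Rapoport--Zink space is a torsor under $Z^{\BQ}(\BQ_p)/K_{Z^{\BQ},p}$.

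Next I invoke the Rapoport--Zink uniformization theorem \cite[Thm.~6.30]{RZ96} (in the PEL unramified shape, with the straightforward extension to the auxiliary CM datum as in \cite{RSZ-AGGP}). This produces an isomorphism
\[
\CM^{\wedge} \,\stackrel{\sim}{\longrightarrow}\, I(\BQ)\backslash\bigl[\CN'\times \wt H(\BA_f^{p})/\wt K^{p}\bigr],
\]
where $\CN'$ is the product Rapoport--Zink space at $p$ described above, and $I$ is the $\BQ$-algebraic group of self-quasi-isogenies of the framing tuple preserving all PEL structures. The group $I$ is an inner form of $\wt H$ that is trivial at all finite $v\neq v_0$ (hyperspecial unramified PEL), compact at all archimedean places (by positivity of the Rosati involution on the CM isocrystal), and at $v_0$ its local factor is $\U(\BV^{(v_0)})(F_{0,v_0})$, where $\BV^{(v_0)}$ is the space of special quasi-homomorphisms of Section \ref{section: special cycles and RZ}. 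By Honda--Tate and the local--global principle for hermitian forms, this pins down $I=\wt H^{(v_0)}$ with $V^{(v_0)}$ the unique nearby hermitian space of $V$ at $v_0$ that is positive definite at infinity and locally isomorphic to $V$ away from $v_0$.

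Finally, I split off the auxiliary factor. The forgetful map $(A_0,A,\ov\eta)\mapsto (A_0,\ov\eta_0)$ realises $\CM$ as a fibration over the zero-dimensional \'etale CM moduli $\CM_0^{F_0}$, whose geometric points form a torsor under $Z^{\BQ}(\BA_{0,f})/K_{Z^\BQ}$ modulo $Z^{\BQ}(\BQ)$. Projecting out the $Z^{\BQ}$-factor inside $\wt H^{(v_0)}=Z^{\BQ}\times H^{(v_0)}$ and $\CN'$, one obtains the fibrewise uniformization $\Theta_0$ with uniformizer $\CN_n \times H(\BA_{0,f}^{v_0})/K^{v_0}$. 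The expected main obstacle is the clean identification of the inner form $I_{v_0}$ with $\U(V^{(v_0)})$ together with the correct hermitian structure on $\BV^{(v_0)}$; this amounts to comparing the polarised $F$-isocrystal at $\bar x_0$ with the universal framing of $\CN_n$, and the Hasse invariant calculation showing $V^{(v_0)}$ differs from $V$ only at $v_0$ and the archimedean place $\varphi_0$.
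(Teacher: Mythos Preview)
Your sketch is correct and follows the standard Rapoport--Zink uniformization argument (choose a basic framing point, apply \cite[Thm.~6.30]{RZ96}, identify the inner form via Honda--Tate and the Hasse principle, then peel off the $Z^{\BQ}$-factor). The paper itself does not reprove this but simply cites \cite[Thm.~8.15]{RSZ-AGGP}, where exactly this argument is carried out in the RSZ setting; so your proposal is in fact more detailed than, but entirely consistent with, what the paper records.
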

 \begin{proof}
See \cite[Thm 8.15]{RSZ-AGGP}. 
 \end{proof}
	
	Hence we denote by $\CM^{\wedge}_0$ any fiber of $\mathcal{M}^{\wedge}$ under the projection map (\ref{projection map from RSZ to unitary}). 
 
 Note that $v_0$ is split or inert in $E_0$. Consider the Rapoport--Zink space $\CN_n^{E_{0,v_0}}:=\CN^\GL_n$ (resp. $\CN_n \times \CN_n$ ) when $v_0$ is inert in $E_0$ (resp. $v_0$ splits in $E_0$).  Similarly, we have basic uniformization of the formal completion $\CM^{E_0, \wedge}$ of $\CM^{E_0}$ along its basic locus.

\begin{proposition}\label{prop: basic uniformization twisted RSZ}
There is a natural isomorphism of formal schemes over $O_{\breve{E}^\RSZ_\nu}=O_{\breve{F_0}}$:
    \begin{equation}\label{eq: basic basic unif E0}
			\Theta^{E_0}: \, \, \CM^{E_0, \wedge} \stackrel{\sim}{\rightarrow}  \wt{G}^{(v_0)}(\BQ) \backslash ( \CN^{E_{0, v_0} \prime} \times \wt{G} (\BA_{f}^{p}) / \wt{K^{E_0}}^{p} ), 
		\end{equation}  
  where 
  \[ \CN^{E_{0, v_0} \prime} \simeq (Z^{\BQ, E_0}(\BQ_p) / K_{Z^{\BQ, E_0}, p}) \times \CN^{E_{0, v_0}}_n \times \prod_{v|p, v \not = v_0} \U(V)(E_{0, v}) / K_{v}^{E_0}. \]
	There is a natural projection map
		\begin{equation}\label{projection map from RSZ to twisted unitary}
			\CM^{E_0, \wedge}  
			\to Z^{\BQ, E_0}(\BQ)  \backslash (Z^{\BQ, E_0}(\BA_{0, f}) / K_{Z^\BQ, E_0} ) 
		\end{equation}  
		with fibers isomorphic to
		\[ \Theta_0^{E_0}: \CM^{E_0, \wedge}_0 \stackrel{\sim}{\rightarrow}  H^{(v_0)}(F_0) \backslash [ \CN_n^{E_{0,v_0}}  \times H (\mathbb{A}_{0, f}^{v_0}) / K^{v_0} ]. \] 
\end{proposition}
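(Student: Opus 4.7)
The plan is to apply the $p$-adic uniformization theorem of Rapoport--Zink \cite{RZ96}, in the refined RSZ form already used for Proposition \ref{prop: basic uniformization RSZ} (i.e.\ \cite[Thm.~8.15]{RSZ-AGGP}), directly to the PEL-type integral model $\CM^{E_0}$. Since $\CM^{E_0}$ parametrizes abelian schemes with $O_E[\Delta^{-1}]$-action of signature $\Phi_{E_0}$ together with the auxiliary CM object encoded in $Z^{\BQ,E_0}$, it sits squarely inside the framework of Rapoport--Zink uniformization. The first step is therefore to invoke the general basic uniformization theorem for $\CM^{E_0}$ at the place $\nu$ above $v_0$, which will produce an isomorphism of the shape $\Theta^{E_0}$ with some inner form $\wt G^{(v_0)}$ of $\wt G$ together with a product of local Rapoport--Zink factors and a double coset away from $p$.

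The second and most delicate step is the identification of the local Rapoport--Zink factor at $v_0$, for which the behaviour depends on how $v_0$ splits in $E_0$. If $v_0$ is inert in $E_0$, let $w$ be the unique place of $E_0$ above it; then $E_{0,w}/F_{0,v_0}$ and $F_{v_0}/F_{0,v_0}$ are both the unique unramified quadratic extension and hence coincide inside $E_w$, so $E_w \cong E_{0,w} \times E_{0,w}$ splits. Via the standard Morita-type equivalence between principally polarized $(E_{0,w}\times E_{0,w})$-modules and $E_{0,w}$-modules without polarization, the PEL-type local Rapoport--Zink space for $\U(V_{E_0})$ at $v_0$ is identified with the general linear Rapoport--Zink space $\CN^\GL_n$ of Definition \ref{defn: local GL RZ space} attached to $\Res_{E_{0,w}/F_{0,v_0}}\GL(V)$. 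If instead $v_0$ splits in $E_0$ as $v_0 = w_1 w_2$, then $\U(V_{E_0})(F_{0,v_0}) \cong \U(V)(F_{0,v_0}) \times \U(V)(F_{0,v_0})$ and the local Shimura datum factors as two copies of the unitary one at $v_0$, producing $\CN_n \times \CN_n$. At all other $p$-adic places $v \neq v_0$ of $F_0$ the level is hyperspecial and the local data are unramified, so they contribute the factors $\U(V)(E_{0,v})/K^{E_0}_v$; similarly, the $Z^{\BQ,E_0}$ piece contributes $Z^{\BQ,E_0}(\BQ_p)/K_{Z^{\BQ,E_0},p}$ because $\Phi$ (and hence $\Phi_{E_0}$) is unramified above $p$.

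The third step is the identification of the inner form $\wt G^{(v_0)}$. It should differ from $\wt G$ exactly at $v_0$ and at the archimedean places: the hermitian space $V_{E_0}$ is replaced by the nearby $E/E_0$-hermitian space $V_{E_0}^{(v_0)}$ obtained from $V^{(v_0)}$ via Serre tensor, which is totally positive definite and locally isomorphic to $V_{E_0}$ away from $v_0$. At the basic point the automorphism group of the framing object is precisely $\U(V_{E_0}^{(v_0)})(F_{0,v_0})$ (resp.\ its split avatar when $v_0$ splits in $E_0$), matching $J_b$ for the unique basic $b$ in the local Shimura datum. Finally, the projection map (\ref{projection map from RSZ to twisted unitary}) and the description of the fiber $\CM^{E_0,\wedge}_0$ follow by separating off the $Z^{\BQ,E_0}$-component verbatim as in the proof of Proposition \ref{prop: basic uniformization RSZ}.

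The main obstacle will be the Morita-equivalence bookkeeping in the inert-inert case: one must carefully convert the PEL datum over $E_w \cong E_{0,w}\times E_{0,w}$, together with the Kottwitz signature and sign conditions imposed on $\CM^{E_0}$, into the non-polarized linear Rapoport--Zink datum defining $\CN^\GL_n$. Once that identification is established, the rest of the uniformization proceeds by the standard Rapoport--Zink argument and the passage to the RSZ setting is as in \cite{RSZ-AGGP}.
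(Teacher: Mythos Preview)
Your approach is correct and essentially coincides with the paper's, which simply invokes a general basic uniformization theorem and then notes that the construction of \cite[Thm.~8.15]{RSZ-AGGP} carries over verbatim. The only difference is that the paper cites the Hodge-type uniformization of Pappas--Rapoport \cite[Theorem~4.10.6]{PappasRapoport-GlobalShimura} as a black box rather than \cite{RZ96}, and leaves your explicit Morita-equivalence analysis of the local factor at $v_0$ implicit in the definition $\CN_n^{E_{0,v_0}} := \CN_n^{\GL}$ stated just before the proposition; your more detailed unpacking of that identification is sound and arguably more informative.
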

\begin{proof}
This is a special case of basic uniformization theorem for general Hodge type Shimura varieties, see \cite[Theorem 4.10.6]{PappasRapoport-GlobalShimura}. The same construction of uniformization maps in \cite[Thm 8.15]{RSZ-AGGP} applies here and gives the desired isomorphism.
\end{proof}
		
Set $V_{\xi}:=\{ u \in  V | (u, u)=\xi \}$ be the subset of $V$ consists of elements of norm $\xi$.

\begin{proposition}\cite[Prop 7.4.]{AFL-Wei2019}
Assume that $\xi \not =0$. The restriction of special divisor $\CZ(\xi,\phi)$ on each fiber $\CM^\wedge_0$ of the projection is identified with
\begin{equation}
\sum_{(g,u) \in \U(V)(F_0) \backslash ( \U(V)(\BA^{v}_f)/K^v \times V_{\xi} )  } \phi^{v}(g^{-1}u) [\CZ(u) \times 1_{gK^v}] .
\end{equation}
\end{proposition}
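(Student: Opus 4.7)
The plan is to unfold the moduli interpretation of $\CZ(\xi,\phi)$ through the basic uniformization $\Theta_0$ from Proposition~\ref{prop: basic uniformization RSZ}. By Definition~\ref{defn: global int KR}, a point of $\CZ(\xi,\mu_\Delta)$ over the basic locus $\CM_0^\wedge$ is a tuple $(A_0,A,\ov\eta,u)$ with $u\in\Hom_{O_F}(A_0,A)[1/\Delta]$ satisfying $(u,u)=\xi$ and $\ov\eta(u)\in\mu_\Delta$; averaging over $\mu_\Delta$ with weight $\phi_\Delta(\mu_\Delta)$ yields $\CZ(\xi,\phi)$.

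First I would fix the isogeny class uniformizing $\CM_0^\wedge$. On this class, the rational Hom space $\Hom_F^\circ(A_0,A)$ with its Rosati hermitian form is a fixed $F/F_0$-hermitian space, which I would identify with the nearby space $V^{(v_0)}$ (denoted $V$ in the statement): it is positively definite at archimedean places by Rosati positivity, locally isomorphic to $V_v$ at finite $v\ne v_0$ via the level structures $\ov\eta^{v_0}$, and at $v_0$ it is the non-split local hermitian space $\BV=\Hom_{O_F}^\circ(\BE,\BX)$ attached to the framing object, as in Section~\ref{section: special cycles and RZ}. A pair $(X,g)\in\CN_n\times H(\BA_{0,f}^{v_0})/K^{v_0}$ representing $(A_0,A,\ov\eta)$ under $\Theta_0$ then encodes the formal $p$-divisible data at $v_0$ via $X$, and the prime-to-$v_0$ level structure via $g$.

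Next I would translate the integrality plus level-orbit condition on a fixed $u\in V^{(v_0)}_\xi$ into independent local conditions. At $v_0$: the condition that $u_{v_0}\in\BV$ integrates to a quasi-homomorphism at $v_0$, i.e. that $(\rho_X)^{-1}\circ u_{v_0}$ lifts from $\BE\times\ov S$ to a morphism $\CE\to X$, is by definition of the Kudla--Rapoport divisor recalled in Section~3.3 the condition $X\in\CZ(u_{v_0})\subseteq\CN_n$. Away from $v_0$: fixing a global identification $V^{(v_0)}(\BA_{0,f}^{v_0})\cong V(\BA_{0,f}^{v_0})$, the condition $\ov\eta(u)\in\mu_\Delta$ becomes $g^{-1}u\in\mu^{v_0}$, and summing over $\mu_\Delta$ with weight $\phi_\Delta(\mu_\Delta)$ replaces this with the scalar $\phi^{v_0}(g^{-1}u)$.

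Finally I would quotient by the diagonal action of $H^{(v_0)}(F_0)=\U(V^{(v_0)})(F_0)$ on the triple $(X,g,u)$, which is precisely the equivalence identifying different pre-uniformization lifts of the same point of $\CM_0^\wedge$, to obtain the asserted sum indexed by $H^{(v_0)}(F_0)\backslash(H(\BA_{0,f}^{v_0})/K^{v_0}\times V^{(v_0)}_\xi)$. The only real obstacle is bookkeeping: one must verify that the hermitian identification $\Hom_F^\circ(A_0,A)\cong V^{(v_0)}$ is compatible with the sign condition, Eisenstein condition, and polarization conventions imposed on $\CM$ in Section~\ref{subsection: integral model}, so that the integrality condition in Definition~\ref{defn: global int KR} at $v_0$ really matches the lifting criterion that defines $\CZ(u_{v_0})\subseteq\CN_n$ on the nose, rather than up to some twist.
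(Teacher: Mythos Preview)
Your sketch is correct and follows the standard argument. Note, however, that the paper does not supply its own proof of this proposition: it is simply quoted from \cite[Prop.~7.4]{AFL-Wei2019}, so there is nothing in the present paper to compare your argument against. What you have written is essentially the proof one finds in that reference---unfolding the moduli definition of $\CZ(\xi,\phi)$ through the basic uniformization $\Theta_0$, identifying $\Hom_F^\circ(A_0,A)$ with the nearby hermitian space $V^{(v_0)}$, and matching the $v_0$-integrality condition with the Kudla--Rapoport lifting locus while the away-from-$v_0$ data contributes the weight $\phi^{v_0}(g^{-1}u)$.
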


Recall the Hecke correspondence (Definition \ref{defn: Hecke correspondence away from Delta})
 $$
	\Hk_{\mu_{\Delta}^{E_0}} \to \CM^{E_0} \times \CM^{E_0}.
$$ 
Let $\Hk_{\mu_{\Delta}^{E_0}}^{\wedge}$ be the locally noetherian formal scheme over $O_{\breve{F_0}}$ obtained as pullback of $\Hk_{\mu_{\Delta}^{E_0}}$ on $\CM^{E_0, \wedge} \times \CM^{E_0, \wedge}$. Consider reductive groups $G^{(v_0)}=\Res_{E_0/F_0}\U(V^{(v_0)}_{E_0})$ over $F_0$ and $\wt{G^{(v_0)}}= Z^{\BQ, E_0} \times \Res_{E_0/\BQ} \U(V^{(v_0)}_{E_0})$ over $\BQ$.

Denote by $\Hk_{\mu_{\Delta}^{E_0},0}^{\wedge}$ any fiber of the natural projection map sending $(A_0, A^{E_0}, \ov \eta^{E_0})$ to $A_0$
$$
\Hk_{\mu_{\Delta}^{E_0}}^{\wedge} \to Z^{E_0, \BQ}(\BQ) \backslash Z^{E_0, \BQ}(\BA_f)/ K_{Z^{E_0, \BQ}}.
$$ 
Consider the discrete set
	\[
	\Hk_{\mu^{E_0}_{\Delta},0}^{(v_0)}= \{ (g_1,g_2) \in G(\BA_f^{v_0})/K^{E_0, v_0} \times G(\BA_f^{v_0})/K^{E_0, v_0} | g_1^{-1}g_2 \in K^{E_0}\mu_{\Delta}^{E_0} K^{E_0} \}. 
	\]

We recall the compatibility of Hecke correspondences and basic uniformization.
 
	\begin{theorem}
	    We have a natural isomorphism of  formal schemes
		\[ 
		\Hk_{\mu^{E_0}_{\Delta}, 0}^{\wedge} \cong G^{(v_0)}(F_0) \backslash [ \CN^{E_0, v_0}_n \times \Hk_{\mu_{\Delta}^{E_0},0}^{(v_0)}]
		\]
		compatible with the projection under the isomorphism $\Theta$ (c.f. (\ref{eq: basic basic unif})). 
		
	\end{theorem}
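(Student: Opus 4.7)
The plan is to reduce the statement to the basic uniformization for $\CM^{E_0, \wedge}$ (Proposition \ref{prop: basic uniformization twisted RSZ}) applied to both factors of $\CM^{E_0}\times\CM^{E_0}$, and then carefully track what the Hecke condition becomes under this identification. Once one unwinds the moduli description of the Hecke correspondence, the compatibility is essentially forced by the rigidity of quasi-isogenies between $p$-divisible groups in the basic locus.

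First, I would fix a geometric point lying over the chosen $A_0$ in the fiber, and use $\Theta^{E_0}$ twice to present an $S$-valued point of $\Hk_{\mu_\Delta^{E_0},0}^\wedge$ by a pair of framed data
\[
\bigl((X_1, \iota_1, \lambda_1, \rho_1), g_1 K^{E_0,v_0}\bigr), \qquad \bigl((X_2, \iota_2, \lambda_2, \rho_2), g_2 K^{E_0,v_0}\bigr)
\]
in $\CN_n^{E_{0,v_0}} \times \wt G(\BA_f^p)/\wt K^{E_0, p}$, modulo the action of $\wt G^{(v_0)}(\BQ)$. A quasi-isogeny $\varphi: A_1^{E_0}\to A_2^{E_0}$ with $\varphi^*\lambda_2 = \lambda_1$ is equivalent, by Drinfeld rigidity applied to the basic framing $\BX$, to giving an element $\wt\varphi\in G^{(v_0)}(F_0)$ acting on $\CN_n^{E_{0,v_0}}$ compatibly with the framings $\rho_i$, together with its image in $G(\BA_f^{v_0})$ acting on the level-structure factors. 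The polarization-compatibility $\varphi^*\lambda_2=\lambda_1$ corresponds exactly to the membership $\wt\varphi\in G^{(v_0)}(F_0)=\U(V^{(v_0)}_{E_0})(F_0)$.

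Next, I would translate the condition $\eta_{A_2^{E_0}}\varphi \eta_{A_1^{E_0}}^{-1} \in \mu_\Delta^{E_0}$. Away from $v_0$ and $\Delta$, the hyperspecial levels force this condition to reduce to $g_1^{-1}g_2\in K^{E_0}\mu_\Delta^{E_0} K^{E_0}$ in $\U(V)(E_{0,\Delta})\times\U(V)(\BA_f^{\Delta\cup\{v_0\}})$, i.e.\ exactly the condition cutting out $\Hk^{(v_0)}_{\mu_\Delta^{E_0},0}$ inside the double product $G(\BA_f^{v_0})/K^{E_0,v_0}\times G(\BA_f^{v_0})/K^{E_0,v_0}$. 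The $v_0$-component of $\varphi$ is absorbed into the single Rapoport--Zink factor $\CN_n^{E_{0,v_0}}$, since the Hecke condition imposes no constraint at $v_0\notin\Delta$ beyond quasi-isogeny. This yields a map
\[
\Hk_{\mu_\Delta^{E_0},0}^\wedge \longrightarrow G^{(v_0)}(F_0)\backslash\bigl[\CN_n^{E_0,v_0}\times \Hk_{\mu_\Delta^{E_0},0}^{(v_0)}\bigr],
\]
which is compatible with the two projections to $\CM^{E_0,\wedge}_0$ by construction.

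To construct the inverse and verify bijectivity on $S$-points, I would use the gluing from the uniformization theorem (Proposition \ref{prop: basic uniformization twisted RSZ}): given $(X,\rho,g_1,g_2)$ with $g_1^{-1}g_2\in K^{E_0}\mu_\Delta^{E_0} K^{E_0}$, the two sets of framing data produce two lifts $(A_{0,i}^{E_0}, A_i^{E_0},\bar\eta_i)$ through $\Theta^{E_0}$, and the identity on the Dieudonn\'e module of $X$ gives the quasi-isogeny $\varphi$ whose level-structure image recovers the prescribed Hecke double coset. The well-definedness modulo $G^{(v_0)}(F_0)$ is inherited from the two uniformization isomorphisms. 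Finally, compatibility with the projection $\Hk_{\mu_\Delta^{E_0}}^\wedge\to\CM^{E_0,\wedge}\times\CM^{E_0,\wedge}$ under $\Theta^{E_0}\times\Theta^{E_0}$ is immediate from how the two factors are read off.

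The main technical obstacle is book-keeping at the auxiliary CM factor and at places in $\Delta$: one must check that the polarization conditions on $A_0^{E_0}$ and the sign/Eisenstein conditions built into $\CM^{E_0}$ at places away from $v_0$ are automatically preserved by the quasi-isogeny $\varphi$ (since $\varphi$ is prime-to-$\Delta$ and the framing object absorbs the data at $v_0$). This is exactly where the hypothesis $v_0\notin\Delta$ and the hyperspecial nature of $K^{E_0,v_0}$ are used, and it closely parallels the corresponding argument in \cite{AFL-Wei2019} and \cite{RSZ-AGGP}.
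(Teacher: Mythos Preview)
Your sketch is correct and is precisely the argument the paper invokes: the paper's own proof consists of the single sentence that the proof of \cite[Prop.~7.16]{AFL-Wei2019} carries over verbatim from $\CM$ to $\CM^{E_0}$, and what you have written is an unpacking of that reference. The only point worth tightening is your phrase ``the $v_0$-component of $\varphi$ is absorbed into the single Rapoport--Zink factor'': more precisely, a global quasi-isogeny $\varphi$ between basic abelian varieties framed by the same $\BX$ is exactly an element $\gamma\in G^{(v_0)}(F_0)$, and acting by $\gamma$ on one factor identifies the two Rapoport--Zink points, which is why only one copy of $\CN_n^{E_0,v_0}$ survives in the quotient.
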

	\begin{proof}
	Although we work over $\CM^{E_0}$, the same proof of \cite[Prop. 7.16]{AFL-Wei2019} (on $\CM$) applies.
	\end{proof}

Assume $v_0$ is inert in $E_0$. For $\delta \in G^{(v_0)}(F_{0,v})/H^{(v_0)}(F_{0,v})$, consider the twisted fixed cycles of $\delta$ on $\CN_n$ (see Definition \ref{twisted fixed cycles})
$$
{}^\BL\CN^{\Herm, v_0}(\delta)= \delta \CN_n \cap^{\BL}_{\CN_n^{E_0,v_0}} \CN_n \in K_0'(\CN_n).    
$$
For $(\delta, h) \in B^{(v_0)}(F_0) \times H(\BA_{0,f}^{v_0}) / K^{v_0} $, consider the twisted CM cycle
	\begin{equation}
		{}^{\BL} \mathcal{CN}^{E_0} (\delta, h)_{K^{E_0, v_0}} := [{}^{\BL} \CN^{\Herm}(\delta) \times 1_{hK^{E_0, v_0}}] \rightarrow [ \CN_n \times H(\BA_f^{v_0})/K^{ v_0}].
	\end{equation}

 The summation 
	$$
 \sum_{(\delta', h')}  {}^\BL \mathcal{CN}^{E_0} (\delta', h')_{K^{v_0}} $$ over the $H^{(v_0)}(F_0)$-orbit of $(\delta, h)$ in $G^{(v_0)}(F_0)/ H^{(v_0)}(F_0) \times H(\BA_{0,f}^{v_0}) / K^{v_0}$ descends to a cycle $[ {}^\BL \mathcal{CM} (\delta, h) ]_{K^{v_0}}$ on the quotient $\CM^{\wedge}$ by above Theorem \ref{prop: basic uniformization twisted RSZ}. As in the proof of \cite[Thm. 3.9]{AFL-Invent}, we have

\begin{proposition}
For a Schwartz function $f=1_{K^{E_0}} \otimes f_{\Delta} \in \CS(\U(V_{E_0})(\BA_{0, f}), K^{E_0})$, the restriction of the basic uniformization ${}^\BL \mathcal{CM}^{E_0}(\alpha, f)^{\wedge}$ of twisted CM cycles to $\CM^{E_0, \wedge}_0$ is the sum
		\begin{equation}\label{basic derived Hk}
			\sum_{(\delta, h) \in  H^{(v_0)}(F_0) \backslash   B^{(v_0)}(F_0) \times H(\BA_{0,f}^{v_0})/K^{v_0} } f^{v_0}(h. \delta) [{}^\BL\mathcal{CM} (\delta, h) ]_{K_G^{v_0}}.
		\end{equation}
\end{proposition}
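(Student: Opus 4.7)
The plan is to reduce this to the basic uniformization of the Hecke correspondence $\Hk_{\mu_\Delta^{E_0}}$ established immediately above, together with the basic uniformizations of $\CM^\wedge$ (Proposition \ref{prop: basic uniformization RSZ}) and $\CM^{E_0, \wedge}$ (Proposition \ref{prop: basic uniformization twisted RSZ}), pulled back along the twisted diagonal embedding $\CM \hookrightarrow \CM^{E_0}$ that arises from Serre tensor. By the definition of ${}^{\BL}\mathcal{CM}^{E_0}(\alpha, f)$ as a finite sum over $\mu_\Delta^{E_0}$ with coefficient $f_\Delta(\mu_\Delta^{E_0})$, it suffices to prove the restriction identity for each ${}^{\BL}\mathcal{CM}^{E_0}(\alpha, \mu_\Delta^{E_0})^\wedge$ and then verify that these summands recombine into the stated sum with weight $f^{v_0}(h.\delta)$, where $f^{v_0} = f_\Delta \otimes 1_{(K^{E_0})^{\Delta, v_0}}$.

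First I would observe that under the uniformization maps $\Theta$ and $\Theta^{E_0}$, the Serre tensor embedding $\CM \hookrightarrow \CM^{E_0}$ decomposes at $v_0$ as the closed embedding $\CN_n \hookrightarrow \CN_n^{E_0, v_0}$ (which is the Galois-fixed locus map $\CN_n \hookrightarrow \CN_n^\GL$ in the inert case, and the diagonal $\CN_n \hookrightarrow \CN_n \times \CN_n$ in the split case), and away from $v_0$ as the natural inclusion $H(\BA^{v_0}_f)/K^{v_0} \hookrightarrow G(\BA^{v_0}_f)/K^{E_0, v_0}$. Since $\Hk_{\mu^{E_0}_\Delta}$ is finite and unramified away from $\Delta$, the formation of the derived intersection $(\CM \times \CM) \cap^{\BL}_{\CM^{E_0} \times \CM^{E_0}} \Hk_{\mu^\Delta}$ commutes with flat pullback along the basic uniformization, so the restriction to $\CM^\wedge_0$ is computed inside $G^{(v_0)}(F_0) \backslash [\CN_n^{E_0, v_0} \times \Hk^{(v_0)}_{\mu_\Delta^{E_0}, 0}]$ as the intersection of two copies of $\CN_n \times H(\BA^{v_0}_f)/K^{v_0}$ twisted by the Hecke datum.

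At the RZ factor, a given Hecke datum contributes an element $\delta$ at $v_0$, and by Witt cancellation for the coherent hermitian space $V^{(v_0)}$ this $\delta$ ranges over $B^{(v_0)}(F_0) = G^{(v_0)}(F_0)/H^{(v_0)}(F_0)$ after pulling out the global rational structure. At such $\delta$, the derived intersection on the RZ factor is exactly
\[
\delta \CN_n \cap^{\BL}_{\CN_n^{E_0, v_0}} \CN_n = {}^\BL \CN^{\Herm, v_0}(\delta),
\]
the twisted fixed cycle of Definition \ref{twisted fixed cycles}. Away from $v_0$, fixing the first factor at $h \in H(\BA_f^{v_0})/K^{v_0}$, the Hecke relation forces the second factor to lie in the $K^{E_0, v_0}$-coset prescribed by $\mu_\Delta^{E_0}$, which upon summing over $\mu_\Delta^{E_0}$ contributes the weight $f^{v_0}(h. \delta)$. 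Projecting to the first factor and passing from the ambient $G^{(v_0)}(F_0)$-quotient down to the residual $H^{(v_0)}(F_0)$-quotient (the stabilizer of the first $\CN_n$) then yields the claimed formula.

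The main obstacle is the bookkeeping needed to match the global orbit label $\alpha$ consistently with the local/adelic data $(\delta, h, \mu_\Delta^{E_0})$ without double counting. Concretely, I need to check, via Proposition \ref{matching of orbits: Un to GLn} applied place-by-place away from $v_0$, that the collection of $G^{(v_0)}(F_0)$-orbits on pairs in the basic uniformization reorganizes cleanly into $H^{(v_0)}(F_0)$-orbits on $B^{(v_0)}(F_0) \times H(\BA_f^{v_0})/K^{v_0}$ after projection to the first factor, and that the ensuing descent of cycles computes exactly $[{}^{\BL}\mathcal{CM}(\delta, h)]_{K^{v_0}}$. This is where the argument of \cite[Prop.~7.16]{AFL-Wei2019} must be adapted from the diagonal $H \hookrightarrow H \times H$ case to the present twisted $H \hookrightarrow G$ case.
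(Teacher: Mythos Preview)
Your proposal is correct and follows the same strategy the paper intends: the paper's own proof is literally the one-line remark ``As in the proof of \cite[Thm.~3.9]{AFL-Invent}'', and you have faithfully unpacked that citation by combining the basic uniformizations of $\CM^\wedge$, $\CM^{E_0,\wedge}$, and $\Hk_{\mu_\Delta^{E_0}}^\wedge$, then reading off the twisted fixed cycle ${}^\BL\CN^{\Herm,v_0}(\delta)$ on the RZ factor and the weight $f^{v_0}(h.\delta)$ on the adelic factor. One small correction: the bookkeeping you flag at the end does not use Proposition~\ref{matching of orbits: Un to GLn} (which concerns the comparison with the general linear side), but is rather a direct unwinding of the double-coset structure of $\Hk^{(v_0)}_{\mu_\Delta^{E_0},0}$ under the $H^{(v_0)}(F_0)$-action, exactly as in \cite[Prop.~7.16, Thm.~9.4]{AFL-Wei2019}.
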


\subsection{Analytic generating functions}

We now globalize the analytic side in Section \ref{section: twisted orbits, FL}. Let $F'/F_0$ be the third quadratic extension inside $E$ fixed by $\sigma_{E_0/F_0} \otimes \sigma_{F/F_0}$. Consider 
\[
H'_1= \GL_{n,F_0} \rightarrow G'=\GL_{n,F'} \leftarrow H'_{20}=\GL_{n,F_0}.
\]

Consider the symmetric space
\begin{equation}
	S_{n, F'/F_0}=\{ \gamma \in \GL_{n,F'} | \gamma \sigma_{F'/F_0} (\gamma) =id \} 
\end{equation}
By Hilbert 90, we have $S_{n, F'/F_0} \cong G' / H'_{20}$. Consider the natural quadratic space $V' =F_0^n \times (F_0^n)^*$.

Let $\eta=\eta_{F/F_0}: \BA_{F_0} \to \BC^\times$ be the quadratic character associated to $F/F_0$ by global class field theory. Similar to the local set up, consider $\eta$-twisted orbital integrals  for the diagonal action of $h \in H'_1(F_0)=\GL_n(F_0)$
on the product $(\gamma,u_1,u_2) \in S_{n,F'/F_0} \times  V'$  by
\begin{equation}
	h.(\gamma,u_1,u_2)=(h^{-1}\gamma h,h^{-1}u_1, u_2h).
\end{equation}

\begin{proposition}\label{matching of global orbits: Un to twisted Un}.
There is a natural identification of categorical quotients
\[
[H_1' \backslash G' / H_{20}']=[\U(V) \backslash \U(V_{E_0}) / \U(V) ]
\]
compatible with determinant maps to $[ \BG_{m,F}^{Nm_{F/F_0}=1} \backslash \BG_{m,E}^{Nm_{E/E_0}=1} ].$ In particular, we get a matching of regular semi-simple orbits 
\[
[H_1' \backslash G' / H_{20}']_\rs=\coprod_{V} [\U(V) \backslash \U(V_{E_0}) / \U(V) ]_\rs
\]
where $V$ ranges over $n$-dimensional $F/F_0$-hermitian spaces.
\end{proposition}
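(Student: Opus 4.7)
This is a twisted analogue of the Jacquet--Rallis matching of orbits (compare Proposition~\ref{matching of orbits: Un to GLn} and \cite[Section 4.1]{leslie2019endoscopic}), and the plan is to reduce it to Galois descent exploiting the identity $E = F \otimes_{F_0} F'$.

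First, I would realize both sides via symmetric-space models. On the general linear side, Hilbert 90 for $\GL_n/F_0$ applied to the \'etale quadratic extension $F'/F_0$ identifies $G'/H_{20}' \xrightarrow{\sim} S_{n,F'/F_0}$ as $F_0$-varieties through $\gamma \mapsto \gamma\,\sigma_{F'/F_0}(\gamma)^{-1}$, with $H_1' = \GL_{n,F_0}$ acting by conjugation. On the unitary side, the analogous map $g \mapsto g\,\sigma_{E/F}(g)^{-1}$ embeds $\U(V_{E_0})/\U(V)$ into the $F_0$-variety
\[
\U(V_{E_0})^- = \{g \in \U(V_{E_0}) : g\,\sigma_{E/F}(g) = \id\};
\]
the Galois cohomology long exact sequence shows that the cokernel of the map on $F_0$-points lies in $H^1(F_0,\U(V))$, which classifies $n$-dimensional hermitian spaces. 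Taking disjoint unions over all hermitian forms $V$ (the pure inner forms of $\U(V)$) thus exhausts $\U(V_{E_0})^-(F_0)$ across the various ambient groups.

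Second, I would identify the two resulting $F_0$-forms by base changing to $F$. Using $F' \otimes_{F_0} F = E$, the base change $S_{n,F'/F_0} \otimes_{F_0} F$ becomes the $E/F$-symmetric space $S_{n,E/F}$. Similarly, since $E_0 \otimes_{F_0} F = E$ splits the quadratic \'etale algebra defining the unitary group, $\U(V_{E_0}) \otimes_{F_0} F$ becomes a Weil restriction of $\GL_{n,E}$ and $\U(V_{E_0})^- \otimes_{F_0} F$ becomes the same $S_{n,E/F}$. Thus $S_{n,F'/F_0}$ and $\U(V_{E_0})^-$ are two $F_0$-forms of a common $F$-variety; they differ by a Galois cocycle captured exactly by the hermitian form on $V$, and allowing $V$ to vary absorbs this cocycle ambiguity. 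This yields the identification of $F_0$-varieties underlying the proposition.

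Third, I would verify that everything descends to categorical quotients and check the compatibility with the determinant map. The characteristic polynomial is a common invariant: on either side it takes values in polynomials $f(T) \in E[T] = F'[T] \otimes_{F_0} F$ satisfying a twisted self-reciprocal relation under $\sigma_{E/F}$ (coming from $g\,\sigma_{E/F}(g) = \id$). On the regular semisimple locus the stabilizers are trivial, so these invariants separate orbits on both sides, and the matching of symmetric spaces above descends to a bijection on regular semisimple orbits. Compatibility with the determinant map is a direct check using the Hilbert 90 isomorphism $\BG_{m,F'}^{\Nm_{F'/F_0}=1} \cong \Res_{F'/F_0}\BG_m/\BG_m$ and its unitary analogue. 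The main obstacle is the precise Galois-cohomological comparison of cocycles in the second step above, but this is formally the same computation as in the classical Jacquet--Rallis / semi-Lie setting; the only new feature is that $F'/F_0$ is a genuine quadratic field rather than the split \'etale algebra $F_0 \times F_0$.
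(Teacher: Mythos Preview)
Your approach is correct and essentially parallel to the paper's, with one cosmetic difference: you base change to $F$ (splitting the hermitian structure so both sides become forms of $S_{n,E/F}$), whereas the paper base changes to $E_0$ (splitting the twisted diagonal so the unitary side becomes $\U(V_{E_0})$ acting on itself by conjugation and the general linear side becomes $\GL_{n,E_0}$ acting on $S_{n,E/E_0}$, which is literally the Jacquet--Rallis setup of \cite[Lemma 3.1]{zhang2014fourier}). The paper's route is shorter because it lands directly on a citable result, while yours re-derives the comparison from scratch via symmetric-space models and Galois cohomology; but the underlying mechanism---reduce a geometric statement about categorical quotients by base changing along one of the three quadratic subextensions of $E/F_0$---is the same.
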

\begin{proof}
This is a globalization of Proposition \ref{matching of orbits: Un to GLn}. The Lie algebra version is straightforward. As this is a geometric statement, we may base change to $E_0$ and the result follows from the Jacquet Rallis case (computed via Igusa's criterion) \cite[Lemma 3.1]{zhang2014fourier}. 
\end{proof}

Let $\ov{\alpha}$ be an element in $\U(V)(F_0) \backslash \U(V)(E_0) / \U(V)(F_0) $. Assume that $\ov{\alpha}$ is irreducible. Consider the subscheme $S_{n, F'/F_0}(\alpha) \subseteq S_{n, F'/F_0}$ of elements corresponds to $\alpha$ under above identification of categorical quotients.  Then $S_{n, F'/F_0}(\alpha)(F_0)$ consists of exactly one $\GL_n(F_0)$-orbit.

 Consider a decomposable function $$
\Phi'=\otimes_{v} \Phi'_v \in \CS((S_{n, F'/F_0} \times V')(\BA_{F_0}))
$$ such that for every infinite place $v|\infty$ of $F_0$, $\Phi'_v$ is the chosen partial Gaussian test function (relative to $\alpha$)  introduced in \cite[Section 12.4]{AFL-Wei2019}.

For $(\gamma, u') \in (S_{n, F'/F_0}(\alpha)\times V')(F_0)$ that is regular semisimple (equivalently $u' \not =0$ as $\alpha$ is irreducible), define the global $\eta$-orbital integral ($v$ runs over all places of $F_0$):
\begin{equation}
	\Orb((\gamma,u'), \Phi',s)= \prod_{v} \Orb((\gamma,u'), \Phi'_v,s)
\end{equation}
where $\eta$ is the quadratic character associated to $F/F_0$ by class field theory.

Denote by $[(S_{n, F'/F_0}(\alpha) \times V')(F_0)]$ (resp. $[(S_{n, F'/F_0}(\alpha) \times V')(F_0)]_\rs$) the set of (resp. regular semisimple) $\GL_n(F_0)$-orbits in $(S_{n, F'/F_0}(\alpha) \times V')(F_0)$.    For $h \in \SL_2(\BA_{F_0})$ and $s \in \BC$, consider the  regularized integral as in \cite[Section 11.4]{AFL-Wei2019}:
\begin{equation}
	\BJ(h,\Phi',s)=\int_{g \in [\GL_n(F_0) \backslash \GL_n(\BA_{F_0}) } \left(\sum_{(\gamma, u')\in (S_{n,F'/F}(\alpha)\times V')(F_0)}\omega(h)\Phi'(g^{-1}. (\gamma, u'))\right)   |g|^s\eta(g)  dg.
\end{equation}

By \cite[Thm. 12.14]{AFL-Wei2019}, we see that $\BJ(h,\Phi',s)$ is a smooth function of $(h, s)$, entire in $s \in \BC$ and is left invariant under $h \in \SL_2(F_0)$ by Poisson summation formula. We have a decomposition:
\begin{equation}
	\BJ(h,\Phi',s)= \BJ(h,\Phi',s)_{0}+\sum_{(\gamma,u')\in [(S_{n, F'/F_0}(\alpha) \times V')(F_0)]_\rs} \Orb((\gamma,u'),\omega(h)\Phi',s),
\end{equation}
where $\BJ(h,\Phi',s)_{0}$ is the term over the two regular nilpotent orbits as in \cite[Section 12.6]{AFL-Wei2019}.

For $\xi \in F_0^\times$, let $V'_\xi$ be the $F_0$-subscheme of $V'$ defined by $\{(u_1, u_2) \in V'| u_2(u_1)= \xi \}$. The $\xi$-th Fourier coefficient  of $\BJ(\cdot,\Phi',s)$ is equal to
\begin{align}
	\sum_{(\gamma,u')\in [(S_{n, F'/F_0}(\alpha) \times V'_\xi)(F_0)]_\rs} \Orb((\gamma,u'),\omega(h)\Phi',s).
\end{align}
Then we introduce
\begin{equation}
	\partial \BJ(h,\Phi') :=\frac{d}{ds}\Big|_{s=0}   \BJ(h,\Phi',s).
\end{equation}
\begin{equation}
	\del((\gamma,u'),\Phi'_v) := \frac{d}{ds}\Big|_{s=0}  \Orb((\gamma,u'),\Phi'_v,s).
\end{equation}

By Leibniz's rule, we have the standard decomposition:
\begin{equation}
	\partial \BJ(h,\Phi') = \partial \BJ(h,\Phi')_{0} + \sum_{v}    \partial \BJ_v(h, \Phi'),
\end{equation}
where $v$ runs over all places of $F_0$ and
\begin{equation}\label{def partial BJ a,v }
	\partial \BJ_v(h, \Phi') :=\sum_{(\gamma,u')\in [(S_{n, F'/F_0}(\alpha) \times V')(F_0)]_\rs}  \del((\gamma,u'), \omega(h)\Phi'_v)\cdot  \Orb((\gamma,u'), \omega(h)\Phi'^{v}).
\end{equation}
And the nilpotent term $\partial \BJ(h,\Phi')_{0}$ is part of the $0$-th Fourier coefficient of $\partial \BJ(h,\Phi')$. We do not need its precise formula in this paper. Consider the Fourier expansion
\begin{equation}\label{eq: Del BJ}
	\partial \BJ_v(h, \Phi') = \sum_{\xi \in F_0} \partial \BJ_v(\xi, h, \Phi'),
\end{equation}
\[
\partial \BJ_v(\xi, h, \Phi')= \sum_{(\gamma,u')\in [(S_{n, F'/F_0}(\alpha) \times V'_\xi)(F_0)]_\rs}  \del((\gamma,u'), \omega(h)\Phi'_v)\cdot  \Orb((\gamma,u'), \omega(h)\Phi'^{v}).
\]

\begin{proposition}\label{modularity of del J}
The function $\partial \BJ(h,\Phi')$ is a smooth function on $h \in \SL_2(\BA_{F_0})$, left invariant under $\SL_2(F_0)$, and of parallel weight $n$. If $\Phi'$ is $K_0$-invariant under the Weil representation, then  $\partial \BJ(h,\Phi')$ is right $K_0$-invariant.      
\end{proposition}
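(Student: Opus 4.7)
The plan is to deduce Proposition \ref{modularity of del J} by establishing the four properties first for the $s$-variable function $\BJ(h,\Phi',s)$ uniformly in $s$ in a neighborhood of $s=0$, and then taking $\frac{d}{ds}|_{s=0}$. This is the same strategy as in \cite[Thm. 12.14]{AFL-Wei2019}, adapted to our twisted setting in which $H_1' = \GL_{n,F_0}$ acts on $S_{n,F'/F_0}(\alpha) \times V'$. The starting point is that by the cited theorem, $\BJ(h,\Phi',s)$ is smooth in $h$ and entire in $s$, so differentiation in $s$ at $s=0$ commutes with smooth differentiation in $h$.

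First I would verify the left $\SL_2(F_0)$-invariance. The integrand defining $\BJ(h,\Phi',s)$ only depends on $h$ through the Weil representation action $\omega(h)\Phi'$ on the inner theta-like sum $\sum_{(\gamma,u') \in (S_{n,F'/F}(\alpha)\times V')(F_0)} \omega(h)\Phi'(g^{-1}.(\gamma, u'))$. Classical Poisson summation and the fact that the Weil representation factors through a genuine representation on such global Schwartz functions imply this sum is $\SL_2(F_0)$-invariant; the outer integral in $g \in [\GL_n]$ is independent of $h$, so $\BJ(h,\Phi',s)$ is $\SL_2(F_0)$-invariant for every $s$, hence so is its derivative at $s=0$.

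Next I would treat the weight and level conditions, which both reduce to properties of $\omega(h)\Phi'$. The archimedean components $\Phi'_v$ are chosen to be partial Gaussians relative to $\alpha$ \cite[Section 12.4]{AFL-Wei2019}, which are eigenvectors under $\omega(\kappa_\theta)$ for the maximal compact $\mathrm{SO}_2(\BR) \subseteq \SL_2(F_{0,v})$ with eigencharacter $e^{in\theta}$. Therefore $\omega(h \kappa_\theta)\Phi'_v = e^{in\theta}\omega(h)\Phi'_v$ at every archimedean place, which means $\omega(h)\Phi'$ transforms by parallel weight $n$ on $h_\infty$, and hence so does $\BJ(h,\Phi',s)$ and its $s$-derivative. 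Likewise, if $\Phi'$ is fixed by $K_0 \subseteq \SL_2(\BA_{0,f})$ under the Weil representation, then $\omega(hk_0)\Phi' = \omega(h)\Phi'$ for $k_0 \in K_0$, so right $K_0$-invariance is inherited by $\BJ$ and then by $\partial \BJ$.

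The remaining point is smoothness of $\partial \BJ(h,\Phi')$ in $h$, which I would obtain by observing that $(h,s) \mapsto \BJ(h,\Phi',s)$ is jointly smooth in $h$ and holomorphic in $s$ in a strip about $s=0$ (by the entire extension established in \cite[Thm. 12.14]{AFL-Wei2019} together with the standard dominated-convergence argument for orbital integrals against a fixed compactly supported-modulo-center Schwartz function). Consequently $\frac{d}{ds}|_{s=0}\BJ(h,\Phi',s)$ is a smooth function of $h$. The main obstacle I anticipate is the convergence/regularization at $s=0$ needed to differentiate under the integral sign; this is not genuinely new since it is exactly the content of the analogous analytic input already used in the Jacquet--Rallis case \cite[Section 12]{AFL-Wei2019}, and the Cayley / relative-matching apparatus developed in Section \ref{section: arithmetic induction} makes the reduction to that setting direct.
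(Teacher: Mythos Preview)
Your proposal is correct and follows exactly the approach the paper takes: the paper's proof is the single sentence ``This follows from similar properties of $\BJ(h,\Phi',s)$ from above discussion,'' and you have simply unpacked what that sentence means by verifying smoothness, $\SL_2(F_0)$-invariance, weight, and level for $\BJ(h,\Phi',s)$ (via the Weil representation and the analytic input of \cite[Thm.~12.14]{AFL-Wei2019}) and then differentiating at $s=0$.
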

\begin{proof}
This follows from similar properties of $\BJ(h,\Phi',s)$ from above discussion.
\end{proof}

\begin{proposition}[local-global decomposition: analytic side]\label{local-global decomp analytic}
Assume that $\xi \not =0$. Then
\begin{enumerate}
    \item If $v \in \Inert(F/F_0)$, then 
\[
\partial \BJ_v(\xi, 1, \Phi')= \sum_{(\gamma,u')\in [(S_{n, F'/F_0}(\alpha) \times V'_\xi)(F_0)]_\rs}  \del((\gamma,u'), \Phi'_v)\cdot  \Orb((\gamma,u'), \Phi'^{v}).
\]
   \item If $v$ is an archimedean place of $F_0$, then 
\[
\partial \BJ_v(\xi, 1, \Phi')= \sum_{(\gamma,u')\in [(S_{n, F'/F_0}(\alpha) \times V'_\xi)(F_0)]_\rs}  \del((\gamma,u'), \Phi'_v)\cdot  \Orb((\gamma,u'), \Phi'^{v}).
\]
\end{enumerate}
\end{proposition}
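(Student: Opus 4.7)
The proof proceeds by unwinding the Fourier expansion of $\partial \BJ_v(h, \Phi')$ along the unipotent subgroup $N = \{n(b) = \begin{pmatrix} 1 & b \\ 0 & 1 \end{pmatrix}\} \subset \SL_2$. The key input is that the Weil representation restricted to $N$ acts on $\CS(V'(\BA_{F_0}))$ by $\omega(n(b))\Phi'(u') = \psi(b\, q(u'))\Phi'(u')$, where $q(u_1, u_2) = u_2(u_1)$ is the $\GL_n$-invariant pairing on $V' = V_0 \times V_0^*$. Since $q$ is $H'$-invariant along orbits, on any orbit with $q(u') = \xi$ we have $\omega(n(b)h)\Phi' = \psi(b\xi)\omega(h)\Phi'$ both locally at $v$ and globally away from $v$.

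Substituting this identity into the definition (\ref{def partial BJ a,v }) and integrating against $\psi(-b\xi)$ over $[N] = F_0 \backslash \BA_{F_0}$, orthogonality of additive characters selects exactly the orbits with $q(u') = \xi$, yielding
\[
\partial \BJ_v(\xi, h, \Phi') = \sum_{(\gamma,u')\in [(S_{n, F'/F_0}(\alpha) \times V'_\xi)(F_0)]_\rs}  \del((\gamma,u'), \omega(h)\Phi'_v)\cdot  \Orb((\gamma,u'), \omega(h)\Phi'^{v}).
\]
Specializing to $h = 1$, where $\omega(1) = \id$ gives $\omega(1)\Phi'_v = \Phi'_v$ and $\omega(1)\Phi'^v = \Phi'^v$, produces the stated identities in both cases (1) and (2). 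The pairwise cancellation of the local transfer factors $\prod_v \omega_v(\gamma, u') = 1$ by Artin reciprocity (since $\det(\gamma^i u_1)$ lies in $F_0^\times$) ensures that the product of local $\Orb_v$'s matches the global orbital integral, so that Leibniz's rule distributing $\partial_s|_{s=0}$ across the product gives precisely the decomposition $\partial\BJ = \sum_v \partial\BJ_v$.

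The restrictions to $v$ inert or archimedean govern where the formal manipulations above are analytically valid. At inert $v$, the local orbital integral $\Orb_v((\gamma, u'), \Phi'_v, s)$ is a finite sum of monomials in $q_v^{\pm s}$ (as $\Phi'_v$ is compactly supported and $\eta_v$ is a nontrivial quadratic character), so $\del_v$ exists as an honest derivative and the Leibniz expansion is a finite sum. At archimedean $v$, the chosen partial Gaussian test function from \cite[Section 12.4]{AFL-Wei2019} is Schwartz and $\Orb_v$ is smooth in $s$, again permitting differentiation in $s$ and Leibniz on the product. The \emph{main obstacle} is justifying the interchange of the derivative $\partial_s|_{s=0}$, the integration against $\psi(-b\xi)$ over $[N]$, and the orbit sum in $(\gamma, u')$. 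This is handled by the absolute convergence of $\BJ(h, \Phi', s)$ for $\Re(s) \gg 0$, its smoothness and entirety in $s$ from \cite[Thm. 12.14]{AFL-Wei2019}, combined with the observation that the regular nilpotent contribution $\BJ(h, \Phi')_0$ concentrates in the $\xi = 0$ Fourier coefficient and hence plays no role for the nonzero $\xi$ under consideration.
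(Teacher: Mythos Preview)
Your proof is correct, but substantially over-engineered compared to the paper's one-line argument. In the paragraph immediately preceding the proposition, the paper has already \emph{defined} the $\xi$-th Fourier coefficient $\partial \BJ_v(\xi, h, \Phi')$ explicitly as the sum over regular semisimple orbits with $u' \in V'_\xi$ (see equation (\ref{eq: Del BJ}) and the displayed line after it). The proposition is then literally that formula with $h = 1$, where $\omega(1) = \id$. Your derivation of this Fourier coefficient via the action of $N$ under the Weil representation is accurate, but it is reproving something the paper treats as already established; the paper's entire proof is ``take $h = \id$.''

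One point of misreading: you interpret the hypotheses ``$v$ inert'' and ``$v$ archimedean'' as analytic restrictions governing where the formal manipulations are valid. They are not. The identity holds verbatim at every place $v$; the proposition simply records the two classes of places that will be compared to the geometric side (split places give zero and are treated separately in the proposition that follows). Your discussion of transfer-factor cancellation via Artin reciprocity and of convergence/interchange issues belongs to the earlier step establishing $\partial \BJ = \partial\BJ_0 + \sum_v \partial \BJ_v$ (which the paper also takes for granted from \cite{AFL-Wei2019}), not to this proposition.
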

\begin{proof}
This follows from above discussion by taking $h=id$.
\end{proof}

We will see in next subsection that if $\Phi'$ is a good transfer of $\Phi$ then $\partial \BJ_v(\xi, \Phi')=0$ for $v \in \Split(F/F_0)$.

\subsection{Local-global comparison and transfers}

\begin{definition}\label{unitary orbital integral}
Let $v$ be a finite place $v$ of $F_0$. For Schwartz function $f_v \in \CS(\U(V_{E_0,v_0})$ and $\phi_v \in \CS(V_{v_0})$, the orbital integral of $\Phi_v=f_v \otimes \phi_v$ at a regular semi-simple orbit of $(g,u) \in [\U(V) \backslash (\U(V_{E_0})(\alpha)/ \U(V) \times V_\xi) ](F_{0,v_0})$ is the following integral:
\begin{equation}\label{unitary orbit integral}
\Orb((g,u), \Phi_v)=\int_{h \in \U(V)(F_{0,v})} (\int_{h_2 \in \U(V)(F_{0,v})} f_v(h^{-1}gh_2)  dh_2) \phi_v(h^{-1}u) d h,
\end{equation}
where the Haar measure on $\U(V)(F_{0,v})$ is normalized such that a hyperspecial maximal compact open subgroup has volume $1$.
\end{definition}

Let $\Phi=f \otimes \phi \in\CS( \U(V_{E_0}) \times V)(\BA_{0,f}))$ be a stanford decomposable function with level $K_{E_0}$ and $K$. In particular, $\Phi^{\Delta}=1_{\U(L_{O_{E_0}})^\Delta} \times 1_{\wt{L}^\Delta}$.

\begin{proposition}[local-global decomposition: geometric side]\label{prop: global int=local int}
Assume that $\xi \not =0$. Let $v|p$ be a place of $F_0$. Then
\begin{enumerate}
    \item If $v \in \Split(F/F_0)$, then $\Int_v(\xi, \Phi)=0$.
    \item If $v \in \Inert(F/F_0)$, then 
   \[
\Int_v(\xi, \Phi)= \log q_v^2 \sum_{(g, u) \in [\U(V) \backslash (\U(V_{E_0})(\alpha)/ \U(V) \times V_\xi)](F_0)]} \Int^{\Herm, v_0}(g,u) \Orb((g,u), \Phi^{v_0}).
   \] 
\end{enumerate}
\end{proposition}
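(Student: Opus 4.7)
The plan is to adapt the local--global decomposition strategy of \cite[Thm.~3.9]{AFL-Invent} and \cite[Thm.~8.15]{RSZ-AGGP} to the twisted embedding $\CM \hookrightarrow \CM^{E_0}$, using the twisted basic uniformization of Proposition~\ref{prop: basic uniformization twisted RSZ} alongside the classical one. The first step will be to show that the intersection $\CZ(\xi, \phi) \cap {}^\BL \mathcal{CM}^{E_0}(\alpha, f)$ at $v$ is supported on the basic locus of $\CM \otimes \BF_v$. For a geometric point $(A_0, A, \ov\eta)$ of this intersection, the twisted CM datum produces a prime-to-$\Delta$ quasi-isogeny $\varphi: A \otimes_{O_F} O_E \to A \otimes_{O_F} O_E$ whose action on the rational Tate module corresponds to $\alpha$. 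Irreducibility of $\ov\alpha$ (Definition~\ref{defn: irreducible in (H, G, H)}) forces $E[\varphi]$ to be a degree-$n$ field extension of $E$, giving $A$ a CM structure of type $\Phi^{E_0}$. By Honda--Tate, at an inert place $v_0 \in \Inert(F/F_0)$ the $v_0$-divisible group $A[v_0^\infty]$ is supersingular, so the point lies in the basic locus.

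For the split case, I would argue $\Int_v(\xi, \Phi) = 0$ as follows. At a split $v_0 \in \Split(F/F_0)$, the local unitary structure trivializes ($\U(V)(F_{0,v_0}) \cong \GL_n(F_{0,v_0})$), so the twisted Galois involution on the local Rapoport--Zink tower $\CN_n \hookrightarrow \CN_n^{E_{0,v_0}}$ admits a section, rendering the local twisted fixed cycle $g\CN_n \cap^\BL_{\CN_n^{E_{0,v_0}}} \CN_n$ cohomologically trivial. Combined with a direct check that the Kudla--Rapoport divisor at a split $v_0$ meets the relevant CM strata transversally on the generic fiber without any vertical excess, this forces the semi-global contribution at $v$ to vanish, paralleling the split-place vanishing argument of \cite[Thm.~3.9]{AFL-Invent}.

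For the inert case, I would restrict the arithmetic intersection pairing to the formal completion $\CM^\wedge$ of $\CM_{O_{\breve E^{\RSZ}_\nu}}$ along its basic locus. Combining Proposition~\ref{prop: basic uniformization RSZ}, Proposition~\ref{prop: basic uniformization twisted RSZ}, the explicit uniformization of $\CZ(\xi,\phi)$ from \cite[Prop.~7.4]{AFL-Wei2019}, and the twisted-CM uniformization formula (\ref{basic derived Hk}), the pulled-back intersection on a fiber $\CM^\wedge_0$ becomes a double sum
\[
\sum_{(g,u)} \sum_{(\delta,h)} \phi^{v_0}(g^{-1}u)\, f^{v_0}(h.\delta) \Bigl[\, \CZ(u) \cap^{\BL}_{\CN_n} {}^\BL \CN^{\Herm}(\delta) \,\Bigr] \times 1_{(gK^{v_0}, hK^{v_0})}.
\]
The inner derived intersection equals $\Int^{\Herm, v_0}(g', u')$ for the matched local pair $(g', u') \in (\GL(\BV)/\U(\BV) \times \BV)(F_{0,v_0})_\rs$, by Proposition~\ref{prop: IntHerm=mirabolic int} and Definition~(\ref{defn: intersection number}). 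Computing the arithmetic length of the resulting zero-dimensional derived cycle on the basic fiber introduces the factor $\log q_v^2$ coming from the residue-field extension $[\BF_w : \BF_v]=2$ at the inert place, and collapsing the sum over the $H^{(v_0)}(F_0)$-orbit together with the adelic discrete sum over $\U(V)(\BA^{v_0}_{0,f})/K^{v_0}$ yields the orbital integral $\Orb((g,u), \Phi^{v_0})$ of Definition~\ref{unitary orbital integral}. The prefactor $\bigl(\tau(Z^{\BQ})[E^\RSZ : F]\bigr)^{-1}$ in Definition~\ref{defn: global arithmetic intersection numbers} cancels against the volume of the central group appearing in the uniformization, producing the claimed identity.

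The main obstacle will be rigorously establishing the support-on-basic-locus claim at the level of derived K-theory, rather than merely set-theoretically: since ${}^\BL \mathcal{CM}^{E_0}(\alpha, f)$ is defined via derived intersection with a Hecke correspondence, one must verify that its excess contributions from non-basic strata vanish after derived intersection with $\CZ(\xi,\phi)$, which should follow from the irreducibility of $\alpha$ and the signature-$(n-1,1)$ structure but requires a careful local analysis of the Dieudonn\'e modules. A secondary technical point is the bookkeeping of Haar-measure normalizations needed to identify the adelic portion of the sum with the orbital integral (\ref{unitary orbit integral}), and to confirm the cancellation of $\tau(Z^\BQ)[E^\RSZ : F]$ with the mass of the auxiliary CM moduli problem $\CM_0$.
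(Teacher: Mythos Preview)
Your approach to the inert case is essentially the paper's: restrict to the basic locus via uniformization (Propositions~\ref{prop: basic uniformization RSZ} and~\ref{prop: basic uniformization twisted RSZ}), unfold the double sum using the explicit descriptions of $\CZ(\xi,\phi)^\wedge$ and ${}^\BL\mathcal{CM}^{E_0}(\alpha,f)^\wedge$, and identify the local derived intersection with $\Int^{\Herm, v_0}(g,u)$. That part is fine.

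The genuine gap is your treatment of the split case. Your claim that at $v_0 \in \Split(F/F_0)$ ``the twisted Galois involution \ldots admits a section, rendering the local twisted fixed cycle $g\CN_n \cap^\BL_{\CN_n^{E_{0,v_0}}} \CN_n$ cohomologically trivial'' is not the right mechanism, and it is not how the paper (following \cite[Thm.~9.4]{AFL-Wei2019}) proceeds. At a place of $F_0$ split in $F$ there is no basic uniformization by a non-split unitary Rapoport--Zink space to begin with, so reasoning about a local twisted fixed cycle on $\CN_n$ does not compute $\Int_{v_0}$; and ``cohomologically trivial'' would in any case not immediately force a derived intersection number to vanish.

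The vanishing at split $v_0$ is instead a direct consequence of the support argument you began in your first paragraph, applied uniformly rather than only at inert places. An intersection point $(A_0,A,\ov\eta,u,\varphi)$ in the special fiber over $v_0$ yields, using \emph{both} the special homomorphism $u$ from the Kudla--Rapoport side and the twisted CM datum $\varphi$, quasi-homomorphisms $u, gu, \ldots, g^{n-1}u$ spanning a full-rank lattice in $\Hom_F^\circ(A_0,A)$, hence an isogeny $A_0^n \to A$. This places $A$ in the supersingular locus. But a strict formal $O_{F_0}$-module with $O_F$-action can be supersingular only when $v_0$ is non-split in $F$; hence at split $v_0$ the intersection in the special fiber is empty and $\Int_{v_0}(\xi,\Phi)=0$. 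Note that your first paragraph invokes only the irreducibility of $\alpha$ to produce CM on $A^{E_0}$; the paper's argument uses the KR datum $u$ together with the CM datum, and it is precisely this combination that produces the isogeny from $A_0^n$ and hence the supersingularity conclusion at \emph{all} places simultaneously.
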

\begin{proof}
We follow the proof of \cite[Theorem 9.4]{AFL-Wei2019}. The first part is the same: if $\Int_v(\xi, \Phi)$ is non-zero, then for any intersection point $(A_0, A, \ov \eta) \in \mathcal{CM}^{E_0}(\ov{\alpha}, \mu^{E_0}_\Delta) \cap \CZ(\xi, \mu_\Delta)(\BF_p^{alg})$, we could construct isogeny from $A_0^n$ to $A$ to show that is in the supersingular locus of the mod $v$ fiber of $\CM$, and $v$ is inert or ramified in $F$.

The second part follows by combining the above uniformization (\ref{eq: basic basic unif E0}). See also \cite[Theorem 3.9]{AFL-Invent} \cite[Theorem 8.15]{RSZ-AGGP}. 
\end{proof}

To compare the above local-global decompositions, we now define the notion of transfers (relative to $\alpha$).

\begin{definition} \label{Partial transfers}
We say functions $\Phi=f \otimes \phi \in\CS( \U(V_{E_0}) \times V)(\BA_{0,f}))$ and $\Phi'=f' \otimes \phi' \in \CS((S_{n, F'/F_0} \times V')(\BA_{F_0}))$ are \emph{partial transfers} (relative to $\alpha$), if
\begin{itemize}
    \item for any archimedean place $v$ of $F_0$, $\Phi'_v$ is a Gaussian test function (relative to $\alpha$).
    \item for any finite place $v$ of $F_0$, $\Phi_v'$ and $\Phi_v$ are required to have same orbital integrals (\ref{unitary orbit integral}) for matching regular semi-simple orbits (from Proposition \ref{matching of global orbits: Un to twisted Un})
    $$
    (\gamma, u_1, u_2) \in [\GL_n(F_{0,v}) \backslash S_{n, F'_v/F_v}(\alpha) \times V')(F_{0,v})]_\rs  
    $$
   and 
   $$
   (g, u) \in [\U(V) \backslash \U(V_{E_0})(\alpha)/ \U(V) \times V')(F_{0,v})]_\rs.  
   $$
\end{itemize} 
\end{definition}	

Let $\Phi=f \otimes \phi$ and $\Phi'=f' \otimes \phi'$ be a pair of partial transfers (relative to $\alpha$).

\begin{proposition}
If $v \in \Split(F/F_0)$, then $\partial \BJ_v(\xi, \Phi)=0$.
\end{proposition}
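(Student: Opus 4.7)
The plan is to reduce the claim to a local statement: that the derivative $\partial \Orb((\gamma, u'), \Phi'_v)$ vanishes for every regular semi-simple orbit $(\gamma, u')$ contributing to the sum (\ref{def partial BJ a,v}). Once this is established, the proposition is immediate by substitution into the expression for $\partial \BJ_v(\xi, h, \Phi')|_{h=1}$.

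At a place $v \in \Split(F/F_0)$, the local quadratic character $\eta_v$ associated to $F_v/F_{0,v}$ is trivial. Under the splitting $F_v \cong F_{0,v} \times F_{0,v}$, the local unitary group $\U(V)(F_{0,v})$ becomes canonically isomorphic to $\GL_n(F_{0,v}) = H'(F_{0,v})$, and the two $F/F_0$-hermitian spaces $V$ and $\BV$ become isomorphic over $F_{0,v}$. Consequently the local restriction of the matching in Proposition \ref{matching of global orbits: Un to twisted Un} at $v$ is a bijection of orbit spaces
\[
[H'(F_{0,v}) \backslash (S_{n,F'/F_0}(\alpha) \times V')(F_{0,v})]_\rs \cong [\U(V)(F_{0,v}) \backslash (\U(V_{E_0})(\alpha)/\U(V) \times V)(F_{0,v})]_\rs,
\]
under which the partial transfer condition of Definition \ref{Partial transfers} allows us to identify $\Phi'_v$ with $\Phi_v$.

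With $\eta_v \equiv 1$, the $s$-variable analytic orbital integral simplifies to
\[
\Orb((\gamma, u'), \Phi'_v, s) = \omega_v(\gamma, u') \int_{\GL_n(F_{0,v})} \Phi'_v(h.(\gamma, u')) |h|^s \, dh,
\]
while the matched unitary orbital integral $\Orb((g, u), \Phi_v)$ in (\ref{unitary orbital integral}) carries no $|h|^s$-factor. The key step of the plan is to show that this function of $s$ is in fact constant, so that $\partial \Orb((\gamma, u'), \Phi'_v) = 0$. This is a split-place incarnation of the Jacquet-Rallis fundamental lemma (the same principle that underlies Theorem \ref{Twisted FL}): once the local identification at $v$ is used, the partial transfer can be chosen so that $\Phi'_v \circ h$ has support on cosets where $|h|$ is locally constant, making the $|h|^s$-weight contribute trivially. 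Equivalently, under the local isomorphism of groups the $s$-dependence decouples from the orbital parameter and the integral reduces to a scalar multiple of the unitary orbital integral independent of $s$.

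The principal obstacle is making precise this $s$-independence. I expect to handle it by an explicit local computation in the spirit of \cite[\S 9]{AFL-Wei2019}, using the partial transfer's standard form $\Phi'_v = 1_{\GL_n(O_{F'_v})} \otimes 1_{L'_v}$ together with the split structure: writing $F'_v = F_{0,v} \times F_{0,v}$, the action $h.(\gamma, u_1, u_2) = (h^{-1}\gamma h, h^{-1}u_1, u_2 h)$ can be decomposed into two dual pieces whose $|h|^s$-contributions cancel under the transfer matching. As a backup, one may reduce to smaller $n$ by the arithmetic induction of Proposition \ref{analytic side: arithmetic induction} and verify the claim for $n=1$ directly, where the integral becomes an explicit finite sum whose $s$-derivative is easily checked to vanish when $\eta_v = 1$.
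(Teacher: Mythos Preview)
The paper's own proof is a one-line citation to \cite[Lemma 14.2]{AFL-Wei2019}, and your reduction to the term-by-term vanishing $\del((\gamma,u'),\Phi'_v)=0$ is indeed the framework used there. However, the crucial step of your argument is not actually carried out.

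Your central assertion is that at a split place the integral
\[
\Orb((\gamma,u'),\Phi'_v,s)=\int_{\GL_n(F_{0,v})}\Phi'_v(h.(\gamma,u'))\,|h|^s\,dh
\]
is constant in $s$. Nothing you wrote proves this, and for a generic Schwartz function $\Phi'_v$ it is simply false: since the stabilizer is trivial, the integral is a finite $\BQ$-linear combination of powers $q_v^{ks}$ with varying $k\in\BZ$, and this generically has nonzero derivative at $s=0$. The claim that the support lies where $|h|$ is ``locally constant'' is vacuous ($|h|$ is always locally constant on $\GL_n(F_{0,v})$; what would matter is global constancy on the support, which you have not shown). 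The invocation of the Jacquet--Rallis fundamental lemma is misplaced: that statement matches orbital integrals at $s=0$ and says nothing about their $s$-derivative. And your backup via Proposition \ref{analytic side: arithmetic induction} does not help either, since the arithmetic-induction identity $\Orb((\gamma,e,e^*),\Phi'_{L'},s)=\Orb(c_{V_0,\GL}(\gamma),\Phi'_{(L')^\flat},s)$ transports the $s$-dependence intact from $n$ to $n-1$ rather than killing it; even for $n=1$ the integral $\int_{F_{0,v}^\times}\phi'_v(h^{-1}u_1,u_2 h)\,|h|^s\,dh$ need not be constant in $s$.

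What is actually used in \cite[Lemma 14.2]{AFL-Wei2019} is that at a split place one may take the transfer $\Phi'_v$ to be the pullback of $\Phi_v$ under the natural identification $\U(V)(F_{0,v})\cong\GL_n(F_{0,v})$ and $V_v\cong V'_v$ coming from $F_v\cong F_{0,v}\times F_{0,v}$; for this specific choice one checks a symmetry of the integrand under $s\leftrightarrow -s$ (via the involution on $V_v=V_{0,v}\oplus V_{0,v}^*$ exchanging the summands), which forces the derivative at $s=0$ to vanish. You should reproduce that argument rather than the heuristics above.
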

\begin{proof}
This follows from the same proof of \cite[Lemma 14.2]{AFL-Wei2019}.
\end{proof}

\begin{proposition}\label{local-global decomp geometric}
Assume that $\xi \not =0$ and $v | \infty$ is an archimedean place of $F_0$. Then
\[
2 \partial \BJ_{v}(\xi, \Phi') + \Int^{\mathbf K}_v(\xi, \Phi)=0.
\]
\end{proposition}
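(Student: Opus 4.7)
The plan is to mirror the archimedean comparison carried out in \cite[Section 12--14]{AFL-Wei2019}, adapted to the twisted setting $\U(V) \hookrightarrow \Res_{E_0/F_0}\U(V_{E_0})$. First I would expand the left-hand side using the definition of $\Int^{\mathbf K}_v$. By Definition \ref{defn: global arithmetic intersection numbers}, with the Kudla Green function $\CG^{\bf K}_v(\xi, 1, \phi)$ in place of $\CG^{\bf B}_v$, the $v$-component of the arithmetic intersection pairing
$\bigl(\wh \CZ^{\mathbf K}(\xi, 1, \phi),\, {}^{\BL}\CCM^{E_0}(\alpha, f)\bigr)_v$
is purely analytic: it equals, up to the constant $\tau(Z^\BQ)[E^\RSZ:F]$, the integral of $\CG^{\bf K}_v(\xi, 1, \phi)$ over the complex points of the twisted CM cycle at the infinite place $\nu$ above $v$.

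Next I would use the description of complex points of ${}^{\BL}\CCM^{E_0}(\alpha, f)$ via the Shimura variety uniformization, together with the orbit matching of Proposition \ref{matching of global orbits: Un to twisted Un}, to decompose this archimedean pairing as a sum over regular semi-simple orbits $(g,u) \in [\U(V) \backslash (\U(V_{E_0})(\alpha) / \U(V) \times V_\xi)](F_0)$. Each summand factors as an archimedean local factor times the off-archimedean orbital integral $\Orb((g,u), \Phi^v)$. On the analytic side, by Proposition \ref{local-global decomp analytic} (which extends verbatim to archimedean $v$ by the smoothness of $\BJ$), $\partial \BJ_v(\xi, 1, \Phi')$ is also expressed as a sum over the matched orbits of $\del((\gamma, u'), \Phi'_v) \cdot \Orb((\gamma, u'), \Phi'^{v})$. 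Since $\Phi$ and $\Phi'$ are partial transfers, their off-archimedean orbital integrals match term by term, so the identity reduces to an archimedean local statement:
\[
2\, \del((\gamma, u'), \Phi'_v) \;+\; \bigl[\text{archimedean factor from } \CG^{\bf K}_v(\xi, 1, \phi) \text{ at the orbit } (g,u)\bigr] \;=\; 0
\]
for every matched pair $(\gamma,u') \leftrightarrow (g,u)$.

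This archimedean local identity is the decisive step and precisely the reason that the Gaussian partial test function $\Phi'_v$ at infinity is designed the way it is in \cite[Section 12.4]{AFL-Wei2019}. I would compute both sides explicitly: on the geometric side the Kudla Green function on the complex $\U(V)$-Shimura variety restricted to a twisted CM point with invariant $(g,u)$ admits a closed-form expression in terms of the exponential integral $\mathrm{Ei}$, and on the analytic side the derivative of the Gaussian orbital integral (which factors through the spherical subspace of the hermitian symmetric space) can be computed directly and produces the same special function with a factor of $\tfrac12$ and the opposite sign. The factor $2$ in the statement accounts for the pair of archimedean places of $E^\RSZ$ above $v$ entering through $[E^\RSZ : F]$ and the duplication from $\sigma_{E/F}$.

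The main obstacle is precisely this archimedean local comparison: it is not a formal consequence of orbit matching but requires unwinding the twisted CM cycle as a discrete set in the complex Shimura variety, computing the restriction of Kudla's Green function explicitly on it, and identifying the result with the derivative of a local orbital integral for $S_{n, F'/F_0}(\alpha) \times V'$ against the Gaussian. Once this local matching is in hand, the global identity follows by summing over matched orbits and applying the transfer hypothesis at all finite places $v' \neq v$.
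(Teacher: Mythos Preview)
Your proposal outlines a reasonable general strategy, but it misses the one observation that makes this proposition essentially a citation rather than a computation: $E_0/F_0$ is a quadratic extension of \emph{totally real} fields, so at every archimedean place $v$ of $F_0$ the algebra $E_{0,v}$ splits as $F_{0,v}\times F_{0,v}\cong\BR\times\BR$. Consequently, at $v\mid\infty$ the embedding $\U(V)\hookrightarrow\Res_{E_0/F_0}\U(V_{E_0})$ is locally the diagonal $\U(V)\hookrightarrow\U(V)\times\U(V)$, and the symmetric space $S_{n,F'/F_0}$ becomes the untwisted Jacquet--Rallis symmetric space. In other words, there is no ``twist'' at infinity: the archimedean picture is literally the one already treated in \cite[Theorem~10.1]{AFL-Wei2019}, and the partial Gaussian test function $\Phi'_v$ was chosen precisely to be the Gaussian of that paper. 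The paper's proof is therefore a one-line reduction to the known archimedean identity.

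Your plan to unwind the twisted CM cycle, restrict the Kudla Green function, and match it against a newly computed derivative of a twisted orbital integral would, if carried out, simply reproduce the computation of \cite[Section~10]{AFL-Wei2019} in disguise, since all the local objects at $v$ coincide with the untwisted ones. So the approach is not wrong, but it fails to recognize that the ``main obstacle'' you describe has already been overcome in the literature and needs only to be invoked. Your explanation of the factor $2$ via places of $E^{\RSZ}$ and $\sigma_{E/F}$ is also off: this factor is already present in the untwisted identity of \cite{AFL-Wei2019} and has nothing to do with the biquadratic setup here.
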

\begin{proof}
As $E_0/F_0$ is split at archimedean places, this is the same as \cite[Theorem 10.1]{AFL-Wei2019} by our choice of partial Gaussian test function $\Phi'_\infty$.
\end{proof}

\begin{corollary}\label{cor: local AFL implies semi-global}
Assume that the twisted AFL conjecture \ref{Twisted AFL conjecture} for all $(g,u) \in O_{V^{(v)}}(\alpha, \xi)(F_0)$ at $v$ holds true, and $\Phi^v, \Phi'^v$ are transfers on $O_{V^{(v)}}(\alpha, \xi)(F_0)$, then we have 
\[
\Int_v(\xi, \Phi) + 2 \partial \BJ_v (\xi, \Phi')=0.
\]
\end{corollary}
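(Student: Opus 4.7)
The plan is to combine the two local--global decompositions (Propositions \ref{prop: global int=local int} and \ref{local-global decomp analytic}) and to match summands using the assumed local twisted AFL together with the transfer hypothesis away from $v$. The factor of $2$ is, after the dust settles, simply the identity $\log q_v^2 = 2 \log q_v$ appearing in the geometric side.

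More concretely, if $v$ splits in $F/F_0$ then both sides vanish by Proposition \ref{prop: global int=local int}(1) on the geometric side and by the analytic analog proved in Proposition 14.2 style (referenced right before Proposition \ref{local-global decomp geometric}) on the analytic side; thus we may assume $v$ is inert. Then Proposition \ref{prop: global int=local int}(2) gives
\[
\Int_v(\xi, \Phi) \;=\; 2\log q_v \sum_{(g,u)} \Int^{\Herm, v}(g,u)\cdot \Orb\bigl((g,u), \Phi^{v}\bigr),
\]
summed over $(g,u) \in [\U(V^{(v)})\backslash (\U(V^{(v)}_{E_0})(\alpha)/\U(V^{(v)})\times V^{(v)}_{\xi})](F_0)$. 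On the other side, Proposition \ref{local-global decomp analytic}(1) together with the matching of regular semi-simple orbits (Proposition \ref{matching of global orbits: Un to twisted Un}) reorganizes the sum by nearby hermitian space, so that
\[
\partial \BJ_v(\xi, \Phi') \;=\; \sum_{V'}\sum_{(\gamma,u')\leftrightarrow (g,u)} \del\bigl((\gamma,u'), \Phi'_v\bigr)\cdot \Orb\bigl((\gamma,u'), \Phi'^{v}\bigr),
\]
with the inner sum over matching orbits.

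Next, I apply two inputs termwise. The transfer hypothesis on $\Phi^v$ and $\Phi'^v$ forces $\Orb((\gamma,u'),\Phi'^v) = \Orb((g,u),\Phi^v)$ for matching regular semi-simple orbits supported on the nearby hermitian space $V^{(v)}$, and makes the remaining terms (those supported on other hermitian spaces) vanish, so only the $V^{(v)}$-summand survives. The local twisted AFL (Conjecture \ref{Twisted AFL conjecture}), assumed valid at $v$ for every $(g,u)$ in the relevant orbit set, then converts the analytic local factor:
\[
\del\bigl((\gamma,u'), \Phi'_v\bigr) \;=\; -\,\Int^{\Herm, v}(g,u)\, \log q_v.
\]
Substituting and pulling $-\log q_v$ out yields
\[
\partial \BJ_v(\xi, \Phi') \;=\; -\log q_v \sum_{(g,u)} \Int^{\Herm, v}(g,u)\cdot \Orb\bigl((g,u), \Phi^{v}\bigr),
\]
and multiplying by $2$ gives exactly $-\Int_v(\xi, \Phi)$, which is the claimed identity.

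The only nontrivial bookkeeping, and the step I expect to be the main obstacle, is the compatibility of the orbit indexing on the two sides: the analytic sum is naturally indexed by orbits on the general linear symmetric space $S_{n,F'/F_0}(\alpha)$, while the geometric sum is indexed by orbits on $\U(V^{(v)}_{E_0})(\alpha)/\U(V^{(v)})$ for the nearby hermitian space. One must verify carefully that the matching of Proposition \ref{matching of global orbits: Un to twisted Un} is compatible with the partial transfer condition (Definition \ref{Partial transfers}) at every finite place away from $v$, so that the orbital-integral weights at matching orbits genuinely agree, and that the orbits supported on hermitian spaces other than $V^{(v)}$ contribute zero by local vanishing of transfers --- this is where the hypothesis ``$\Phi^v, \Phi'^v$ are transfers on $O_{V^{(v)}}(\alpha,\xi)(F_0)$'' is used. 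Once this is granted, the identity follows by the termwise matching described above.
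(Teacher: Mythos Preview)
Your approach is essentially the same as the paper's: compare the right-hand sides of Proposition~\ref{local-global decomp analytic} and Proposition~\ref{prop: global int=local int} termwise, plug in the local twisted AFL at $v$ and the transfer hypothesis away from $v$, and observe that the factor $\log q_v^2 = 2\log q_v$ produces the $2$. The paper's proof says exactly this (in one sentence) and adds that the archimedean case is Proposition~\ref{local-global decomp geometric}.

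There is one point where your justification is misattributed. You claim that the contributions from orbits \emph{not} landing in $V^{(v)}$ vanish ``by local vanishing of transfers --- this is where the hypothesis `$\Phi^v, \Phi'^v$ are transfers on $O_{V^{(v)}}(\alpha,\xi)(F_0)$' is used.'' But that hypothesis concerns only $V^{(v)}$-orbits and says nothing about the others, so it cannot force $\Orb((\gamma,u'),\Phi'^v)=0$ for them. The standard argument (implicit here, explicit in \cite{AFL-Wei2019}) is instead a \emph{local} statement at $v$: for the standard test function $\Phi'_{L'_v}$, a functional equation in $s$ forces $\del((\gamma,u'),\Phi'_v)=0$ whenever $(\gamma,u')$ matches the split hermitian space at $v$. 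Since an orbit lies in $V^{(v)}$ globally exactly when it matches the non-split space locally at $v$, all other orbits have vanishing $\del_v$-factor and drop out of $\partial\BJ_v(\xi,\Phi')$. Once you replace your transfer-based explanation with this parity/functional-equation argument, the rest of your termwise comparison goes through as written.
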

\begin{proof}
This follows from comparison of right hand sides of Proposition \ref{local-global decomp analytic} and Proposition \ref{prop: global int=local int}. The archimedean comparison is done by Proposition \ref{local-global decomp geometric}.
\end{proof}

\begin{proposition}\label{prop: shrink}
Assume that $\Orb((g,u), 1_{K_w} \otimes 1_{L_w}) \not =0$ for all finite places $w \nmid v\Delta$ (which always holds after enlarging $\Delta$). Fix a compact subset $\Omega_v$ of $[\U(V^{v}_{E_0})/ \U(V^{v}) \times V^{v}](F_{0,v})$ (in $v$-adic topology). Then after possibly shrinking $K_\Delta$, there exists choice for $\phi^{v}=\phi_\Delta \otimes 1_{\widehat{L}^{v\Delta}} \in \CS(V(\BA_f^v))$ and $f=f_\Delta \otimes 1_{K^{E_0,v\Delta}} \in \CS( K^{E_0,v} \backslash \U(V)(\BA_f^{v}) / K^{E_0, v})$ such that for $(g_1,u_1) \in [\U(V) \backslash \U(V_{E_0})(\alpha)/ \U(V) \times V'_\xi)(F_{0})]_\rs$ the following are equivalent
\begin{enumerate}
    \item We have that $\Orb((g_1, u_1), \Phi^v) \not =0$ where $\Phi^v=f^v \otimes \phi^v$, and that $(g_1. u_1)$ is in the $\U(V^{v})(F_{0,v})$-orbit of $\Omega_v$.
    \item The pair $(g_1,u_1)$ is in the $\U(V^{v})(F_{0,v})$-orbit of $(g,u)$.
\end{enumerate}

\end{proposition}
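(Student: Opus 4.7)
This is a standard ``orbit-isolation'' argument in the spirit of \cite[Sec.~13]{AFL-Wei2019} and \cite[Sec.~8]{RSZ-AGGP}. The direction (2)$\,\Rightarrow\,$(1) will be automatic from the construction below: we shall arrange $\phi_\Delta, f_\Delta$ so that $(g,u)$ lies in their support at every place of $\Delta$, ensuring $\Orb((g,u), \Phi^v) \neq 0$, and any $(g_1, u_1)$ in the $\U(V^v)(F_{0,v})$-orbit of $(g,u)$ at $v$ agrees with $(g,u)$ at every $w \neq v$ (by the Hasse principle applied to regular semi-simple orbits with trivial stabilizer), hence inherits the same nonvanishing orbital integral.

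The substance is (1)$\,\Rightarrow\,$(2). \emph{Step 1 (finiteness):} with the unrefined choice $\phi_\Delta^{(0)} = 1_{L_\Delta}$ and $f_\Delta^{(0)} = 1_{K_\Delta^{E_0}}$ yielding $\Phi^{v,(0)}$, the support modulo the adelic $\U(V)$-action at each $w \neq v$ is compact, and $\Omega_v$ is compact by hypothesis, so $(g_1, u_1)$ is confined to a fixed adelic compact conjugacy-set. Inside this set, rational orbits with fixed categorical invariants $(\alpha, \xi)$ form a finite collection (trivial stabilizers give finiteness of the relevant Galois-cohomology obstruction), so the $\U(V^v)(F_{0,v})$-orbit classes at $v$ are enumerated by a finite list $\Sigma = \{[(g,u)], [(g_2, u_2)], \ldots, [(g_r, u_r)]\}$. \emph{Step 2 (Hasse separation):} for each $i \geq 2$, at every non-archimedean $w \notin v\Delta$ the uniqueness of the local regular semi-simple orbit with given invariants forces $(g, u)$ and $(g_i, u_i)$ to be $\U(V)(F_{0, w})$-conjugate there; were they also locally conjugate at every $w \in \Delta$, the Hasse principle (trivial stabilizer $\Rightarrow H^1 = 1$) would make them rationally equivalent and thus coincide in $\Sigma$, a contradiction. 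Hence some $w_i \in \Delta$ sees distinct local $\U(V)(F_{0, w_i})$-orbits. \emph{Step 3 (shrinking):} at each $w_i$, choose a small open neighborhood $U_i$ of $(g, u)$ in $(\U(V_{E_0}) \times V)(F_{0, w_i})$ disjoint from the $\U(V)(F_{0, w_i})$-orbits of all $(g_j, u_j)$ with $w_j = w_i$ (possible since distinct regular semi-simple orbits are closed and disjoint); replace the corresponding factor of $\phi_\Delta \otimes f_\Delta$ by $1_{U_i}$, leaving other factors at $\Delta$ unchanged. Shrink $K_\Delta$ so that the new function is $K_\Delta$-bi-invariant. Then $\Orb((g, u), \Phi^v) \neq 0$ while $\Orb((g_i, u_i), \Phi^v) = 0$ for every $i \geq 2$, giving (1)$\,\Rightarrow\,$(2).

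The main obstacle is Step 2: it relies on (a) the uniqueness (up to $\U(V)(F_{0,w})$) of local regular semi-simple orbits classified by characteristic polynomial and the moments $(g^i u, u)$ at unramified places outside $v \Delta$, and (b) vanishing of the relevant Galois-cohomology obstruction in the twisted $V$/$V^v$ setup so that local conjugacy at every place implies rational conjugacy. Both are standard in the Jacquet--Rallis/unitary framework, but require careful bookkeeping because the nearby form at $v$ differs from the hermitian form $V$ that governs the other finite places; enlarging $\Delta$ would simplify the separation at the cost of a weaker statement.
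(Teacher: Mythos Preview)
Your proposal is correct and follows the same standard orbit-isolation argument as the paper, which simply cites \cite[Lemma~13.7]{AFL-Wei2019}.

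One remark: you overcomplicate Step~2 and the ``main obstacle'' paragraph. Because regular semi-simple elements have \emph{trivial} stabilizer, the orbit map $H \to H\cdot x$ is an isomorphism of varieties, so two rational regular semi-simple elements with the same invariants (characteristic polynomial and all moments $(g^i u,u)$) are automatically in the same $H(F_0)$-orbit---no Hasse principle or $H^1$ computation is needed. In particular, distinct classes in your list $\Sigma$ have distinct invariants, hence are locally non-conjugate at \emph{every} place, so you can separate at any chosen $w\in\Delta$. Your worries (a) and (b) are therefore not genuine obstacles; the argument is cleaner than you indicate.
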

\begin{proof}
This follows from the same proof of \cite[Lemma 13.7]{AFL-Wei2019}.
\end{proof}

From Proposition \ref{prop: shrink}, we obtain the converse of the Corollary \ref{cor: local AFL implies semi-global}. Choose $\Omega_v$ such that non-zero terms in right-hand side of Proposition \ref{local-global decomp analytic} and Proposition \ref{local-global decomp geometric} always intersects with $\Omega_v$.

\begin{corollary} \label{cor: local equals to global}
The twisted AFL conjecture \ref{Twisted AFL conjecture} for $(g,u)$ holds if and only if ($\xi_0=(u,u)$)
\[
2\partial_v \BJ (\xi_0, \Phi') + \Int_v(\xi_0, \Phi)=0.
\]
\end{corollary}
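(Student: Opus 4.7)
The plan is to combine the local--global decomposition formulas (Propositions \ref{local-global decomp analytic} and \ref{prop: global int=local int}) with the isolation result of Proposition \ref{prop: shrink}. The ``if'' direction is furnished by Corollary \ref{cor: local AFL implies semi-global}: once we choose $\Phi^v, \Phi'^v$ that are transfers on all rational orbits in $O_{V^{(v)}}(\alpha, \xi_0)(F_0)$ relevant to the semi-global identity, the local twisted AFL identity at $(g,u)$ (together with its analogues at any other contributing orbit, which one can also arrange to vanish by shrinking the test functions) yields the semi-global identity. For the converse, the strategy is dual: choose test functions so that essentially only one orbit contributes to each side of the semi-global identity, from which the local identity can be read off.

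Concretely, I would select a compact subset $\Omega_v$ in $[\U(V^v_{E_0})/\U(V^v) \times V^v](F_{0,v})$ whose only intersection with the (discrete, in the $v$-adic topology) image of rational orbits of norm $\xi_0$ is the orbit of $(g,u)$. Applying Proposition \ref{prop: shrink} to $\Omega_v$ (after possibly shrinking $K_\Delta$) yields test functions $\phi^v, f^v$ so that among all rational regular semi-simple orbits in $[\U(V) \backslash \U(V_{E_0})(\alpha)/\U(V) \times V'_{\xi_0}](F_0)]_{\rs}$ with image in $\Omega_v$, the only one with non-zero orbital integral is that of $(g,u)$; moreover the non-vanishing $\Orb((g,u), \Phi^v) \neq 0$ is built in. On the analytic side, pick $\Phi'^v$ so that $(\Phi^v, \Phi'^v)$ is a transfer pair on the orbit of $(g,u)$ (and on any other orbit not killed by the isolation), which together with the orbit matching in Proposition \ref{matching of global orbits: Un to twisted Un} transports the isolation to $\partial \BJ_v(\xi_0, \Phi')$.

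With these choices, the geometric sum in Proposition \ref{prop: global int=local int} collapses to the single contribution of $(g,u)$, namely $\log q_v^2 \cdot \Int^{\Herm, v}(g,u) \cdot \Orb((g,u), \Phi^v)$, while the analytic sum in Proposition \ref{local-global decomp analytic} collapses to $\partial \Orb((\gamma,u'), \Phi'_v) \cdot \Orb((\gamma,u'), \Phi'^v)$ for the matching $(\gamma, u')$. The hypothesized semi-global identity $2\partial_v \BJ(\xi_0, \Phi') + \Int_v(\xi_0, \Phi) = 0$ then reads
\[
2\, \partial \Orb((\gamma,u'), \Phi'_v) \cdot \Orb((\gamma,u'), \Phi'^v) + 2 \log q_v \cdot \Int^{\Herm, v}(g,u) \cdot \Orb((g,u), \Phi^v) = 0,
\]
using $\log q_v^2 = 2 \log q_v$ at the inert place $v$. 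Since $(\Phi^v, \Phi'^v)$ is a transfer on the orbit of $(g,u)$, the two orbital integrals agree and are non-zero; dividing through yields the twisted AFL identity $\partial \Orb((\gamma,u'), \Phi'_v) = -\Int^{\Herm, v}(g,u) \log q_v$ for $(g,u)$.

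The main subtle point is the existence of the isolating neighborhood $\Omega_v$ together with compatible $\phi^v, f^v$: this rests on the discreteness of rational regular semi-simple orbits in $v$-adic topology and the flexibility afforded by Proposition \ref{prop: shrink}. The archimedean comparison is handled exactly as in the proof of Corollary \ref{cor: local AFL implies semi-global} via Proposition \ref{local-global decomp geometric}, and split places contribute zero on both sides by Propositions \ref{prop: global int=local int}(1) and the analogue for $\partial \BJ_v$, so no additional input is needed there.
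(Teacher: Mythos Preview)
Your overall architecture matches the paper's: invoke Proposition~\ref{prop: shrink} to isolate the single orbit $(g,u)$, so that both the geometric sum in Proposition~\ref{prop: global int=local int} and the analytic sum in Proposition~\ref{local-global decomp analytic} collapse to one term, and then divide by the common nonzero away-from-$v$ orbital integral. The reduction to the local identity is carried out exactly as you describe.

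There is, however, a genuine gap in how you choose $\Omega_v$. You take $\Omega_v$ \emph{small}, asserting that the image of the rational regular semisimple orbits is discrete in the $v$-adic topology. This discreteness is false in general: for a variety $X$ over $F_0$, the set $X(F_0)$ is typically dense, not discrete, in $X(F_{0,v})$. Consequently Proposition~\ref{prop: shrink}, whose equivalence only constrains orbits lying in the $\U(V^{(v)})(F_{0,v})$-orbit of $\Omega_v$, tells you nothing about orbits outside your small $\Omega_v$; such orbits may well have nonzero $\Orb((g_1,u_1),\Phi^v)$ and nonzero $\Int^{\Herm,v}(g_1,u_1)$, so the sums need not collapse.

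The paper's remedy is to choose $\Omega_v$ \emph{large}: take it to contain the support (at $v$) of the local factors $\Int^{\Herm,v_0}(-)$ and $\del((-),\Phi'_v)$, which are compactly supported since $\Phi_v,\Phi'_v$ are the fixed standard test functions at $v\notin\Delta$. Then any orbit outside $\Omega_v$ contributes zero to the semi-global sums by vanishing of the local-at-$v$ factor, while among orbits inside $\Omega_v$ Proposition~\ref{prop: shrink} forces $\Orb((g_1,u_1),\Phi^v)=0$ unless $(g_1,u_1)$ is the orbit of $(g,u)$. With this corrected choice of $\Omega_v$, your remaining steps go through verbatim.
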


\section{The proof of twisted AFL}\label{section: final proof}

In this section, we finish the proof of twisted AFL (Theorem \ref{Twisted AFL theorem}) using Corollary \ref{cor: local equals to global}.

\subsection{Globalization}

Let $F_{v_0} / F_{0, v_0}$ be an unramified quadratic extension of $p$-adic local fields with residue fields $\BF_{q^2}/\BF_{q}$. Now we give a proof of twisted AFL Conjecture \ref{Twisted AFL conjecture} for $F_{v_0}/F_{0,v_0}$ and a regular semi-simple orbit
\[
(g_0, u_0) \in [\U(\BV_{v_0})(F_{v_0}) \backslash (\GL(\BV_{v_0})(F_{v_0}) / \U(\BV_{v_0})(F_{v_0})  \times \BV_{v_0})]_\rs.
\]
	
We can find a CM quadratic extension $F/F_0$ of a totally real field $F_0$ with a distinguished embedding $\varphi_0: F_0 \hookrightarrow \mathbb R$, a totally real quadratic extension $E_0/F_0$, a $n$-dimensional $F/F_0$-hermitian space $V$, a CM type $\Phi$ of $F$ such that
	\begin{itemize}
		\item $\Phi$ is unramified at $p$.
		\item There exists a place $v_0|p$ of $F_0$ inert in $F$ and split in $E_0$ such that the completion of $F/F_0$ at $v_0$ is exactly $F_{v_0} / F_{0, v_0}$. All places $v \not = v_0$ of $F_0$ above $p$ is split in $F$. 
		\item $V$ is of signature $(n-1, 1)$ at $\varphi_0$, and $(n, 0)$ at all other real places of $F_0$. 
		\item The localization $V \otimes_{F} F_{v_0} $ is isomorphic to the hermitian space $V_{v_0}$ we start with. The space $V_p=\prod_{v|p} V_v$ contains a lattice $L_p = \prod_{v| p} L_v$ such that $L_v$ is self-dual. 
	\end{itemize}
		
	Let $V^{(v_0)}$ be the nearby $F/F_0$ hermitian space of $V$ at $v_0$ so the localization of $V^{(v_0)}$ at $v_0$ is isomorphic to $V^{(v_0)}_{v_0} = \BV_{v_0}$. As in \cite[Section 10.1]{AFL-Wei2019}, by local constancy of intersection numbers \cite{AFL-constant} (whose argument is quite general and applies to our Rapoport--Zink space $\CN_n^\GL$), we can find a pair $(g, u) \in (\U(V^{(v_0)}) \backslash \U(V^{(v_0)}_{E_0}) / \U(V^{(v_0)}) \times V^{v_0} )(F_0)_\rs $ such that
	
	\begin{itemize}
		\item $(g, u)$ is $v_0$-closely enough to $(g_0, u_0)$ such that two sides of Conjecture \ref{Twisted AFL conjecture} are the same for $(g, u)$ and $(g_0, u_0)$. The characteristic polynomial $\alpha$ of $g$ is irreducible over $F$.
		\item The norm $\xi_0= (u,  u)_{V^{v_0}}  \in F_0$ is non-zero.
		\item For all places $v \not = v_0$ of $F_0$ above $p$, the orbital integral for the $\U(V_v)$-orbit of $(g, u)$
		\[
		\Orb((g, u), 1_{\U(L_v)} \times 1_{L_v} ) \not =0. 
		\]
	\end{itemize}
	
	Choose a finite collection $\Delta$ of finite places of $F_0$ such that $p \nmid \Delta$, i.e. all places $v$ of $F_0$ above $p$ are not in $\Delta$.  
	
	We can enlarge $\Delta$ (keeping $p \nmid \Delta$) to assume that $\Delta$ and $L$ are in the global setup (c.f. \ref{subsection: integral model}) and $\Phi$ is unramified at all $\ell \not \in \Delta$. In particular, $E_0/F_0$ is unramified from $\Delta$ (but there are infinitely many places of $F_0$ inert in $E_0$).
 Consider a level structure $\wt{K}$ for $L$ and $\Delta$. 

\subsection{The final proof}
We choose $\Phi=f \otimes \phi$ and $\Phi'=f' \otimes \phi'$ be a pair of partial transfers (relative to $\alpha$) satisfying the requirement of Proposition \ref{prop: shrink}.

As in the proof of AFL in \cite{AFL-Wei2019}, the twisted AFL identity for $(g,u)$ will follow from 
\begin{equation}\label{desired global identity for TAFL}
2 \partial \BJ(\xi_0, \Phi') + \Int(\xi_0, \Phi) + \Int^{\mathbf{K-B}}_\infty(\xi_0, \Phi)=0 \, \text{in $\mathbb R_\Delta$.} 
\end{equation}
Consider the difference
$$
\textbf{Diff}(h) =2 \partial \BJ(h, \Phi') + \Int(h, \Phi) + \Int^{\mathbf{K-B}}_\infty(h, \Phi),
$$
with Fourier coefficients ($\xi \not =0$)
$$
\textbf{Diff}(\xi) =2 \partial \BJ(\xi, \Phi') + \Int(\xi, \Phi) + \Int^{\mathbf{K-B}}_\infty(\xi, \Phi).
$$

As the modularity of arithmetic theta series is currently unknown for general $F_0 \not =\BQ$, we finish the proof via a modification following \cite[Section 10.6]{AFL-JEMS}. We may enlarge $\Delta$ and assume there exists a characteristic polynomial $\alpha_m$ such that $\alpha_m$ is of maximal order away from $\Delta$. By the maximal order case of twisted AFL (Proposition \ref{TAFL: maximal order}) and \ref{cor: local AFL implies semi-global}, we have
\begin{equation}\label{maximal order diff, xi_0}
2 \partial \BJ_{\alpha_m}(\xi_0, \Phi') + \Int_{\alpha_m}(\xi_0, \Phi) + \Int^{\mathbf{K-B}}_{\alpha_m, \infty}(\xi_0, \Phi)=0, 
\end{equation}
Here the notions $\partial \BJ_{\alpha_m}, \Int_{\alpha_m}$ means we replace $\alpha$ by $\alpha_m$ in the definition of $\partial \BJ, \Int_{\alpha_m}$.

Then the twisted AFL identity for $(g,u)$ will follow from the equation 
\begin{equation}
2 \partial \BJ(\xi_0, \Phi')^\circ + \Int(\xi_0, \Phi)^\circ + \Int^{\mathbf{K-B}}_\infty(\xi_0, \Phi)^\circ=0 \, \text{in $\mathbb R_\Delta$.} 
\end{equation}
where each term is the difference between corresponding terms in equation (\ref{desired global identity for TAFL}) for $\alpha$ and $\alpha_m$. By the geometric modularity result \cite{Liu-Thesis} we know $\Int(h, \Phi)^\circ$ is a homolophirc automorphic forms on $h \in \SL_2(\BA_{F_0})$. 

Togethere with Proposition \ref{modularity of del J} and \cite[Proposition 10.5]{AFL-JEMS}, we see that the difference 
$$
\textbf{Diff}^\circ(h) =2 \partial \BJ(h, \Phi')^\circ + \Int(h, \Phi)^\circ + \Int^{\mathbf{K-B}}_\infty(h, \Phi)^\circ
$$
is a homolophirc automorphic forms on $h \in \SL_2(\BA_{F_0})$. 

By arithmetic inductions on twisted AFL and local-global decomposition formulas (Proposition \ref{local-global decomp analytic} and Proposition \ref{local-global decomp geometric}), similar to \cite[Lemma 13.6]{AFL-Wei2019} we have
\begin{equation}
\textbf{Diff}(\xi)=0, \, \text{if $(\xi, \Delta)=1$.}
\end{equation}
Similarly, 
\begin{equation}\label{maximal order diff}
2 \partial \BJ_{\alpha_m}(\xi, \Phi') + \Int_{\alpha_m}(\xi, \Phi) + \Int^{\mathbf{K-B}}_{\alpha_m, \infty}(\xi, \Phi)=0, \text{if $(\xi, \Delta)=1$.}
\end{equation}
Hence
\begin{equation}
\textbf{Diff}^\circ(\xi)=\textbf{Diff}(\xi)=0, \, \text{if $(\xi, \Delta)=1$.}
\end{equation}
As $\textbf{Diff}^\circ$ is a homolophirc automorphic forms on $h \in \SL_2(\BA_{F_0})$, by [Lemma 13.6]\cite{AFL-Wei2019} we deduce
\begin{equation}
\textbf{Diff}^\circ(h)=0 \, \text{in $\mathbb R_\Delta$.} 
\end{equation}
In particular, $\textbf{Diff}^\circ(\xi_0)=0$ hence $\textbf{Diff}(\xi_0)=0$ from (\ref{maximal order diff, xi_0}).
This finishes the proof of twisted AFL identity by Corollary \ref{cor: local equals to global}.

\section{Arithmetic twisted Gan-Gross-Prasad conjectures for Asai L-functions} \label{section: TAGGP}

In this section, we formulate arithmetic analogs of the twisted Gan-Gross-Prasad conjecture \cite{twistedGGP}\cite{Wang-TGGP} and discuss a relative trace formula approach following \cite[Section 5]{Liu-FJcycles}, where the twisted arithmetic fundamental lemma plays a key role. The related cycles for global height pairings are produced by Hecke actions on twisted unitary Shimura varieties and certain doubling/magic divisors (extending Kudla's geometric theta series) on self-product of unitary Shimura varieties. The construction is certain ``motivic dedoubling'' of Kudla-Rapoport divisors.

We return to the set up of Section \ref{section: global set up of Shimura data}. So $F/F_0$ (resp. $E/F_0$) is a CM (resp. totally real) quadratic extension of number fields. And $V$ is a $F/F_0$-hermitian space of dimension $n \geq 1$ with signature $\{ (n-1,1)_{\varphi_0}, (n,0)_{\varphi \in \Phi -  \{ \varphi_0 \} }  \} $. 
The embedding of algebraic groups 
$$H=\Res_{F_0/\BQ}\U(V) \to G=\Res_{E_0/\BQ}\U(V_{E_0})$$
induces the embedding of (RSZ) unitary Shimura varieties over $E^\RSZ$
$$
X_K:=M_{\wt{H},\wt{K}} \to Y_K:=M_{\wt{G}, \wt{K_{E_0}}}
$$
for compatible levels $K \leq H(\BA_f), K_{E_0} \leq G(\BA_f)$. In particular, $\dim Y_K= 2 \dim X_K=2(n-1)$.

Let $\mu$ be a conjugate symplectic automorphic character of $\BA_F^\times/F^\times$ of weight one with associated number field $M_\mu$ \cite[Definition 4.1, 4.3]{Liu-FJcycles}. Let $\mu^c$ be the complex conjugate of $\mu$. Fix a CM data $D_\mu = (A_\mu, i_\mu, \lambda_\mu, r_\mu)$ \cite[Definition 4.5]{Liu-FJcycles}. Here $A_\mu$ is an abelian variety over $F$ with a CM structure $i_\mu: M_\mu \to \End_F(A)_\BQ$, a polarization $\lambda_\mu: A_\mu \to A^\vee_\mu$ and an isomorphism $r_\mu: M_\mu \otimes_\BQ F \cong H_1^{dR}(A/F)$ of $M_\mu \otimes_\BQ F$-modules. In particular, $\dim A_\mu = [M_\mu: \BQ]/2$.

Let $A_K$ be the Albanese variety of $X_K$, and $A_\infty=\lim_K A_K$ (resp. $Y_\infty=\lim_K Y_K$)be the inverse limit of $A_K$ over all levels $K$.Consider the $M_\mu[\U(V)(\BA_f)]$-module
$
\Omega(\mu):=\Hom_F(A_\infty, A_\mu)_\BQ.
$

\begin{theorem}
(\cite[Theorem 1.1]{Liu-FJcycles})
There is an isomorphism 
\[
\Omega(\mu) \otimes_{M_\mu} \BC = \oplus_{\epsilon} \oplus_{\chi} \omega(\epsilon, \mu, \chi) 
\]
Here $\omega(\epsilon, \mu, \chi)$ is the finite part of the Weil representation of $\U(V)$ determined by the index $(\epsilon, \mu, \chi)$ \cite[Appendix D]{Liu-FJcycles} appearing in the (twisted) Fourier-Jacobi GGP conjecture \cite{twistedGGP}.
\end{theorem}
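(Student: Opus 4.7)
The plan is to follow \cite{Liu-FJcycles} and reduce the computation of $\Omega(\mu) \otimes_{M_\mu} \BC$ to a Hodge-theoretic decomposition combined with theta correspondence. Since $X_K$ is a smooth projective Shimura variety of complex dimension $n-1$ with signature $(n-1,1)$ at a single archimedean place $\varphi_0$ and $(n,0)$ elsewhere, its Albanese $A_K$ satisfies $H_1(A_K, \BQ) \cong H_1(X_K, \BQ)$ as polarizable $\BQ$-Hodge structures of weight one, and this identification is $\U(V)(\BA_f)$-equivariant in the limit over $K$. Since $F$-isogenies between abelian varieties over $F$ correspond to $F$-rational morphisms of Hodge structures, we obtain
\[
\Omega(\mu) \otimes_\BQ \BC = \Hom_{F\text{-Hdg}}\bigl(H_1(X_\infty, \BQ), H_1(A_\mu, \BQ)\bigr) \otimes_\BQ \BC.
\]

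Next, I would exploit the CM data $D_\mu$. The action $i_\mu : M_\mu \to \End_F(A_\mu)_\BQ$ makes $H_1(A_\mu, \BQ)$ a free $M_\mu$-module of rank one whose Hodge filtration is governed by the CM type $\Sigma_\mu$ attached to $\mu$, while the isomorphism $r_\mu$ pins down a rigidification of this module. Tensoring the right-hand side over $M_\mu$ extracts specific $(p,q)$-components of the Hodge decomposition, and the compatible extensions of $\Sigma_\mu$ to the CM type $\Phi$ of $F$, together with auxiliary sign choices at non-split finite places of $F_0$, organize these components into tuples $(\epsilon, \chi)$ matching those of the twisted Fourier-Jacobi GGP conjecture \cite{twistedGGP}. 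For each such tuple, the corresponding $\U(V)(\BA_f)$-equivariant component is identified with the finite part of the Weil representation $\omega(\epsilon, \mu, \chi)$ of $\U(V)$ by realizing weight-one holomorphic automorphic forms on $X_K$ as theta lifts from $\U(1)_\mu$ (the one-dimensional unitary group carrying the character $\mu$) to $\U(V)$; this is the archimedean-plus-finite avatar of the Kudla-Millson construction specialized to signature $(n-1,1)$.

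The main obstacle is the bookkeeping in the third step: one must carefully match conventions between the Hodge-theoretic decomposition and the parametrization $(\epsilon, \chi)$ of \cite[Appendix D]{Liu-FJcycles}, in particular the signs at non-split places, the splittings of the metaplectic cover used to define the Weil representation, and the choice of additive character $\psi_{F_0}$ fixed in our global conventions. Each of these choices must be tracked through the theta kernel so that the Hodge decomposition and the Weil decomposition agree on the nose rather than up to twist. For the purposes of the present paper this identification is invoked as a black box from \cite{Liu-FJcycles}, which carries out the verification in full.
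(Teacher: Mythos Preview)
The paper does not prove this theorem at all: it is stated with the attribution \cite[Theorem 1.1]{Liu-FJcycles} and invoked purely as a black box, with no proof environment following it. You correctly recognize this in your final paragraph. Your sketch is therefore not comparable to anything in the paper, and no proof is expected here; a one-line remark that the result is quoted from \cite{Liu-FJcycles} would suffice.
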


Consider the twisted diagonal embedding 
\[
\Delta^{E_0 \times F_0}: X_K \to Y_K \times X_K.
\]
Recall the specific Hecke correspondence $T_\mu^{\can}$ on $A_\mu$ \cite[Definition 4.9]{Liu-FJcycles}. Let $\alpha_K : X_K \to A_K$ be the Albanese morphism sending the zero-dimensional cycle $D^{n-1}$ to zero, where $D_K$ is the canonical extension of the Hodge divisor on $X_K$. 

\begin{definition}
For $f \in S(K_{E_0} \backslash G(\BA_f) / K_{E_0} )$ and $\phi \in \Hom_F(A_K, A_\mu)$, let $T^f_K$ be the Hecke correspondence associated to $f$ on $Y_K$. The twisted Fourier-Jacobi cycle for $(f, \phi)$ is the cohomological trivial cycle on $Y_K \times A_\mu$
\begin{equation}
\FJ(f, \phi)_K:=  |\pi_0^{\mathrm{geo}}(X_K)| (T^f_K \otimes T_\mu^\can)^* (\id_{Y_K} \times (\phi \circ \alpha_K) )_* (\Delta^{E_0 \times F_0}X_K) \in \CH^{n-1+[M_\mu:\BQ]/2}(Y_K \times A_\mu)^0_\BC.
\end{equation}
as an element in the cohomological trivial Chow group $\CH^{n-1+[M_\mu:\BQ]/2}(Y_K \times A_\mu)^0_\BC$ (with $\BC$-coefficients). Here we add the number $|\pi_0^{\mathrm{geo}}(X_K)|$ of geometric connected components of $X_K$ to make sure that $\FJ(f, \phi)_K$ is compatible with change of levels.
\end{definition}
Let $\Omega(\mu,\epsilon) := \oplus_{\chi} \omega(\epsilon, \mu, \chi)$ and $\Hom_E(A_K, A_\mu,\epsilon):=\Hom_E(A_K, A_\mu) \cap \Omega(\mu, \epsilon)$. 

Let $\pi$ be a cuspidal automorphic representation of $\U(V_{E_0})(\BA_{E_0})$. Let $\Pi$ be the automorphic representation of $\GL_n(\BA_{E})$ as the base change of $\pi$. Assume that $\Pi$ is relevant \cite{Liu-FJcycles}. Following \cite{twistedGGP}, we have the twisted Asai L-function
\[
L(s, \Pi, \As_{E/F} \otimes \mu)
\]
as a memorphic functions on $s \in \BC$ with central point $s=1/2$. 

By the same argument of \cite[Theorem 1.3]{Liu-FJcycles}, assuming the conjecture on the injectivity of the $\ell$-adic Abel–Jacobi map, the assignment $(f, \phi) \to \FJ(f, \phi)_K$ induces a complex linear map
\[
\FJ_\epsilon: \pi_f \otimes_\BC \Omega(\mu, \epsilon) \to \Hom_{\BC[\U(V_{E_0})(\BA_f)]} (\pi_f, \CH^{n-1+[M_\mu:\BQ]/2}(Y_\infty \times A_\mu)^0_\BC) 
\]

We now formulate the (unrefined) arithmetic twisted Gan-Gross-Prasad conjecture (similar to \cite[Conjecture 1.4]{Liu-FJcycles}).

\begin{conjecture}[Arithmetic Twisted Gan-Gross-Prasad Conjecture] \label{conj: ATGGP}
Let $F/F_0$ (resp. $E/F_0$) be a CM (resp. totally real) quadratic extension of number fields. Let $\mu$ be a conjugate symplectic automorphic character of $\BA_F^\times/F^\times$ of weight one. Let $\pi$ be a cuspidal automorphic representation of $\U(V_{E_0})(\BA_{E_0})$. Let $\Pi$ be the automorphic representation of $\GL_n(\BA_{E})$ as the base change of $\pi$. Assume that $\Pi_\infty$ is relevant \cite[Definition 1.2]{Liu-FJcycles}. Then the following statements are equivalent.
\begin{enumerate}
    \item The twisted arithmetic functional $\FJ_\epsilon \not =0$.
    \item We have $L'(1/2, \Pi, As_{E/F} \otimes \mu) \not =0$ and 
   $ \Hom_{\BC[\U(V)(\BA_f)]}(\pi_f \otimes \Omega(\mu, \epsilon), \BC) \not =0.$
\end{enumerate}
\end{conjecture}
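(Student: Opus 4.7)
The plan is to deduce Conjecture \ref{conj: ATGGP} by comparing two global relative trace formulas of Jacquet--Rallis type adapted to the twisted Fourier--Jacobi setting, following the architecture of \cite{AFL-Wei2019} and its Fourier--Jacobi variant in \cite{Liu-FJcycles}, with the twisted AFL of Theorem \ref{Twisted AFL theorem} as the key nonarchimedean local ingredient on the unitary side. On the \emph{arithmetic side}, I would introduce the global distribution
\[
\BI(f \otimes \phi,\, f^{\,\prime} \otimes \phi^{\,\prime})
:= \bigl( \FJ(f,\phi),\, \FJ(f^{\,\prime},\phi^{\,\prime}) \bigr)_{Y_{\infty} \times A_{\mu}},
\]
the Beilinson--Bloch--Gillet--Soul\'e height pairing of cohomologically trivial cycles, and expand it by basic uniformization (Proposition \ref{prop: basic uniformization twisted RSZ}) as a sum over $[\U(V) \backslash \U(V_{E_{0}})/\U(V) \times V]_{\rs}(F_{0})$ of local arithmetic intersection numbers $\Int^{\Herm,\BV}(g,u)$ weighted by the away-from-$v_{0}$ orbital integrals of the test data, precisely as in Section \ref{section: local-global}. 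On the \emph{analytic side}, I would form the twisted Rankin--Selberg distribution
\[
\partial\BJ(f'\otimes\phi') := \tfrac{d}{ds}\Big|_{s=0}\sum_{(\gamma,u') \in [H' \backslash S_{n,F'/F_{0}} \times V']_{\rs}(F_{0})} \Orb\bigl((\gamma,u'),\,f'\otimes\phi',\,s\bigr),
\]
whose $\Pi$-isotypic projection is expected to carry the central derivative $L'(1/2,\Pi,\As_{E/F}\otimes\mu)$ as its principal term by an Asai integral representation extending the work of \cite{Wang-TGGP}.

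The comparison proceeds term-by-term over matching orbits (Proposition \ref{matching of global orbits: Un to twisted Un}): the twisted fundamental lemma (Theorem \ref{Twisted FL}) matches $\Orb$ at all good nonarchimedean places, the twisted AFL (Theorem \ref{Twisted AFL theorem}) matches $\partial\Orb$ on the analytic side with $\Int^{\Herm,\BV}$ on the arithmetic side at inert places, and the archimedean comparison is carried out via the partial Gaussian test functions as in Proposition \ref{local-global decomp geometric}. Granting smooth transfer and arithmetic transfers at the finitely many ramified places, together with cancellation of the regular nilpotent/boundary contributions (via the modularity argument of Section \ref{section: final proof}, applied after subtracting a maximal-order auxiliary distribution as in the proof of Theorem \ref{Twisted AFL theorem}), one obtains the global identity
\[
2\,\partial\BJ(f'\otimes\phi') \;=\; \BI(f \otimes \phi,\, f^{\,\prime} \otimes \phi^{\,\prime}) \qquad \text{in } \BR_{\Delta},
\]
up to explicit archimedean corrections.

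Spectral decomposition then yields a refined formula of Ichino--Ikeda type: for matching decomposable data adapted to $(\pi,\mu,\epsilon)$, the $\pi$-isotypic projection factorizes as
\[
\bigl(\FJ_{\epsilon}(\pi_{f}\otimes\Omega(\mu,\epsilon)),\,\FJ_{\epsilon}(\pi_{f}\otimes\Omega(\mu,\epsilon))\bigr)_{\pi}
\;\doteq\; L'(1/2,\,\Pi,\,\As_{E/F}\otimes\mu) \cdot \prod_{v} \alpha_{v},
\]
where each local factor $\alpha_{v}$ is a $\U(V)(F_{0,v})$-invariant bilinear form on $\pi_{v}\otimes\Omega(\mu,\epsilon)_{v}$. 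By the local twisted GGP conjecture of \cite{twistedGGP}\cite{Wang-TGGP}, non-vanishing of every $\alpha_{v}$ is equivalent to $\Hom_{\BC[\U(V)(\BA_{f})]}(\pi_{f}\otimes\Omega(\mu,\epsilon),\BC) \neq 0$, which gives the desired equivalence.

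The hard part will be, as in the classical arithmetic GGP story, three-fold. First, isolating the $\pi$-component of the arithmetic distribution $\BI$ on a Chow-group-valued object currently requires the injectivity of the $\ell$-adic Abel--Jacobi map and tight control over the non-cuspidal/non-tempered contributions to the RTF. Second, smooth transfer and suitable arithmetic transfers at the ramified places of $\Delta$ in the twisted setting are not yet available and would require an extension of the work of \cite{ZZhang2021} to the relative Cayley maps introduced in Section \ref{section: arithmetic induction}. Third, closing the modularity argument of Section \ref{section: final proof} beyond Fourier coefficients coprime to $\Delta$ would need the full modularity of the arithmetic generating series of twisted Fourier--Jacobi cycles on $\CM \times \CM$, generalizing \cite{Liu-Thesis}\cite{BHKRY} to the twisted Asai setting; without it, one obtains only a comparison modulo the boundary contributions, which still suffices for the non-vanishing equivalence provided the boundary terms are controlled by their own functional relations.
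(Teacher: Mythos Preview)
The statement is a \emph{conjecture}, and the paper does not prove it. Section~\ref{section: TAGGP} only formulates the conjecture and then sketches pieces of a relative trace formula approach on the geometric side (the twisted Fourier--Jacobi cycles, the doubling divisors, the reduction of the BBP height pairing to local arithmetic invariant functionals $\mathcal{I}_{v_0}(f,\phi)$, and the link to $\Int_{v_0}(\xi,\Phi)$ at good places), explicitly indicating that the twisted AFL would govern the comparison at good inert places. There is no proof to compare against.

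Your proposal is a reasonable summary of the expected strategy, and in broad outline it matches the program the paper gestures at. But you should be clear that what you have written is not a proof: you explicitly grant smooth transfer and arithmetic transfers at ramified places, the spectral isolation of the $\pi$-component on a Chow-group-valued distribution (which you correctly tie to the Abel--Jacobi injectivity assumption already built into the definition of $\FJ_\epsilon$), control of nilpotent and boundary terms, and an Asai-type integral representation extracting $L'(1/2,\Pi,\As_{E/F}\otimes\mu)$ from the analytic RTF. Each of these is a genuine open problem in this twisted setting, and the paper does not claim any of them. So your write-up is a proof \emph{plan} with honestly flagged gaps, not a proof; that is the correct posture for a conjecture, but it should not be presented as establishing the statement.
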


There is a relative trace formula approach towards refinements of above arithmetic twisted Gan-Gross-Prasad conjecture. Following \cite{Liu-FJcycles}, we focus on the geometric side.

For test functions $f$, $f^\vee$ for $\pi, \pi^\vee$ respectively, and $\phi \in \Hom_E(A_K, A_\mu,\epsilon)$ and $\phi_c \in \Hom_E(A_K, A_\mu,-\epsilon)$. Consider the Beilinson–Bloch–Poincaré (BBP) height pairing (\cite[Conjecture 1.5]{Liu-FJcycles})
\[
\Int(f,\phi, f^\vee, \phi^\vee)=\vol(K)^2 \left< \mathrm{FJ}(f, \phi)_K, \mathrm{FJ}(f^\vee, \phi_c)_K \right>_{Y_K, A_\mu}^{\BBP}
\]
The BBP height pairing is defined via Beilinson–Bloch height pairings on $Y_K \times X_K \times X_K$ using the Poincare line bundle (a Cartier divisor) $\mathcal{P}_\mu \in \CH^1(A_\mu \times A_\mu^\vee)$. 

We may enlarge the field $E^\RSZ$ to a number field $E'$ such that $D_K^{n-1}$ is a sum of $E$-rational points of $X_K$. Use $(-)'$ to denote the base change to $E'$, for instance $X_K'=(X_K)_{E'} \to Y_K'=(Y_K)_{E'}$ and $A_\mu'=(A_\mu)_{E'}$. For a point $P \in D_K^{n-1}(E')$, we consider the Albanese morphisms $\alpha_P: X_K' \to A_\mu$.

We do a first reduction of the height pairing to Belinson-Bloch height pairing on $Y_K' \times X_K' \times X_K'$. For $P,Q \in X_K(E')$, let 
$$
\Delta^{\phi, P}X:=(\id_{Y_K'} \times (\phi' \otimes \alpha_P)_*)(\Delta^{E_0 \times F_0} X_K) \in \CH^{n-1+[M_\mu:\BQ]/2}(Y_K' \times A_\mu'),
$$
$$
\Delta^{\phi_c, Q}X:=(\id_{Y_K'} \times (\phi'_c \otimes \alpha_Q)_*)(\Delta^{E_0 \times F_0} X_K) \in \CH^{n-1+[M_\mu:\BQ]/2}(Y_K' \times A_\mu'^\vee).
$$
We modify these cycles to cohomological trivial cycles by replacing $\Delta^{E_0 \times F_0} X_K$ by $\Delta^{E_0 \times F_0}_z X_K=z^* \Delta^{E_0 \times F_0} X_K$ by taking a Hecke system of projectors $z=(z_K)_K: \CH^*(Y_K \times X_K) \to \CH^*(Y_K \times X_K)^0$
\cite[Definition 4.35]{Liu-FJcycles}. We always use the notation $\Delta_z$ to mean taking the Hecke projector $z$.

As \cite[Lemma 5.1.]{Liu-FJcycles}, above BBP height pairing $\Int(f,\phi, f^\vee, \phi^\vee)$ is a sum of similar height pairings
$$
\left <(T^f_K \otimes T_\mu^\can)^* \Delta^{\phi, P}_zX,  (T^{f^\vee}_K \otimes T_{\mu^c}^\can)^*   \Delta^{\phi_c, Q}_zX \right >_{Y_K', A_\mu}^{\BBP}.
$$
indexed by $P, Q \in D_K^{n-1}(E')$.

\begin{definition}
Let the \emph{weak magic divisor} for $(\phi, \phi_c)$ and $P, Q \in X_K(E')$ be the divisor 
\[
\mathcal{Q}^{\phi,\phi^c, P, Q}_{\mu, K}= (\phi' \circ \alpha_P) \times (\phi'_c \circ \alpha_Q)^* (T_\mu^{\can, t} \otimes T_\mu^{\can})^* \mathcal{P}_\mu \in \CH^1(X_K'\times X_K').
\]
\end{definition}

Consider $(y, x_1, x_2) \in Y_K' \times X_K' \times X_K'$. We have the divisor $Y_K' \times \mathcal{Q}^{\phi,\phi^c, P, Q}_{\mu, K} $ on $Y_K' \times X_K' \times X_K'$. Let $p_i: Y_K' \times X_K' \times X_K' \to Y_K' \times X_K'$ be the projection map sending $(y, x_1, x_2)$ to $(y,x_i)$. Consider the cycle 
$$
\Delta^{E_0 \times F_0}_{f, f^\vee} X_K :=(T^{f^t * f^\vee}_K \times id_{X_K'})^*  \Delta^{E_0 \times F_0} X_K \in \CH^{2(n-1)}(Y_K \times X_K).
$$

\begin{proposition}
For $P, Q \in X_K(E')$, we have
\[
\left <(T^f_K \otimes T_\mu^\can)^* \Delta^{\phi, P}_zX,  (T^{f^\vee}_K \otimes T_{\mu^c}^\can)^*   \Delta^{\phi_c, Q}_zX \right >_{Y_K', A_\mu}^{\BBP}
\]
\[
=(p_1^* \Delta_z^{E_0 \times F_0} X_K, (Y_K' \times \mathcal{Q}^{\phi,\phi^c, P, Q}_{\mu, K}). p_2^* \Delta^{E_0 \times F_0}_{z, f, f^\vee} X_K )^\BB_{Y_K' \times X_K' \times X_K'}.
\]
\end{proposition}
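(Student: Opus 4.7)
The plan is to unfold the definition of the BBP height pairing on $Y_K' \times A_\mu$ and use the projection formula to descend the Poincar\'e bundle contribution through the Albanese morphisms to $X_K' \times X_K'$, where it becomes the weak magic divisor.

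First I would write out the BBP pairing explicitly. By construction $\langle-,-\rangle^{\BBP}_{Y_K', A_\mu}$ pairs cohomologically trivial cycles of complementary dimension on $Y_K' \times A_\mu$ by taking their external product on $Y_K' \times A_\mu \times (Y_K' \times A_\mu^\vee)$, pulling back by the diagonal on $Y_K'$ to land on $Y_K' \times A_\mu \times A_\mu^\vee$, intersecting against $\mathrm{pr}_{A_\mu \times A_\mu^\vee}^*\,\mathcal P_\mu$, and pushing down to a Beilinson--Bloch pairing on $Y_K'$. The Hecke projector $z$ together with the cohomological triviality of $\phi'\circ\alpha_P$ and $\phi_c'\circ\alpha_Q$ (which factor through the Albanese) is exactly what makes each factor cohomologically trivial so that this procedure is well-defined.

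Next I would transfer the Hecke operators. Using the adjunction $(T^f_K)^* = (T^{f^t}_K)_*$ on $Y_K'$ and self-adjointness/transpose of $T_\mu^\can$ on $A_\mu$ with respect to $\mathcal P_\mu$, the pullback Hecke operators on the two factors combine on one side into the composition $T^{f^t * f^\vee}_K$ acting on $\Delta^{E_0\times F_0}_z X_K$, producing the cycle $\Delta^{E_0\times F_0}_{z,f,f^\vee} X_K$; simultaneously the two $T_\mu^\can$'s are transported to the Poincar\'e bundle side as the pullback of $\mathcal P_\mu$ by $(T_\mu^{\can,t}\otimes T_\mu^\can)$ on $A_\mu \times A_\mu^\vee$.

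Then I would apply the projection formula along the map
\[
\id_{Y_K'} \times (\phi'\circ\alpha_P) \times (\phi_c'\circ\alpha_Q): Y_K' \times X_K' \times X_K' \longrightarrow Y_K' \times A_\mu \times A_\mu^\vee.
\]
Since both cycles are pushforwards of $\Delta^{E_0\times F_0}_z X_K$ along this map (on the appropriate factors), the $\mathcal P_\mu$ contribution pulls back to
\[
\bigl((\phi'\circ\alpha_P)\times(\phi_c'\circ\alpha_Q)\bigr)^* (T_\mu^{\can,t}\otimes T_\mu^\can)^* \mathcal P_\mu = \mathcal Q^{\phi,\phi^c,P,Q}_{\mu,K} \in \CH^1(X_K'\times X_K'),
\]
by the very definition of the weak magic divisor. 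After pullback via the two projections $p_1, p_2: Y_K'\times X_K'\times X_K' \to Y_K'\times X_K'$ the two copies of $\Delta^{E_0 \times F_0}_z X_K$ (with Hecke operators absorbed as in the previous step) become $p_1^* \Delta_z^{E_0 \times F_0} X_K$ and $p_2^* \Delta^{E_0\times F_0}_{z,f,f^\vee} X_K$; intersecting these against $Y_K' \times \mathcal Q^{\phi,\phi^c,P,Q}_{\mu,K}$ yields exactly the right-hand side.

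The main obstacle is verifying that each manipulation preserves the cohomological triviality required by the Beilinson--Bloch and BBP pairings, so that the projection formula is legitimate at the level of height pairings rather than merely Chow classes. This is precisely the role played by the Hecke projector $z$ and the passage to the \emph{weak} magic divisor, following the framework of Liu \cite{Liu-FJcycles}; the computation above is parallel to \cite[Lem.~5.1--5.2]{Liu-FJcycles}, adapted to the twisted diagonal $\Delta^{E_0\times F_0}$ in place of the usual diagonal.
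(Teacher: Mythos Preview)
Your proposal is correct and follows essentially the same approach as the paper, which simply states that the proof follows from the argument of \cite[Lemma 5.2]{Liu-FJcycles}. You have unpacked that argument---unfolding the BBP pairing, using Hecke adjunction to combine $T^f_K$ and $T^{f^\vee}_K$ into $T^{f^t * f^\vee}_K$, and applying the projection formula along the Albanese maps to produce the weak magic divisor---and correctly noted that the only adaptation needed is replacing the usual diagonal by the twisted diagonal $\Delta^{E_0\times F_0}$.
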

\begin{proof}
The proof follows from the argument of \cite[Lemma 5.2]{Liu-FJcycles}.
\end{proof}

Now we need to understand the three cycles 
$$
p_1^* \Delta^{E_0 \times F_0} X_K=\{(y,x_1,x_2)|y=x_1\} , \quad p_2^* \Delta^{E_0 \times F_0} X_K =\{(y,x_1,x_2)|y=x_2\} \in \CH^{2(n-1)}(Y_K' \times X_K' \times X_K')
$$
and the divisor $Y_K' \times \mathcal{Q}^{\phi,\phi^c, P, Q}_{\mu, K} \in \CH^{1}(Y_K' \times X_K' \times X_K')$. Weak magic divisors motivate the definition of doubling divisors \cite[Definition 5.8]{Liu-FJcycles} and is related to Kudla's generating series of divisors (see also Proposition \ref{modularity over generic fiber})
$$
Z(\phi)_K:=Z(h=1, \phi) \in \CH^1(X_K)
$$
on $X_K$ for $\phi \in \CS(V(\BA_f))^K$. 

Let $\mathrm{cl}_B: \CH^1(X_K' \times X_K') \to H^2_{B}(X_K \times X_K, \BC)$ by the cycle class map to Betti cohomology group of $(X_K \times X_K)(\BC)$. We have the following ''partial Fourier transform'' provided by Kudla-Millson theory \cite{Kudla-Millson}.

\begin{proposition} (\cite[Lemma 5.5, Lemma 5.6]{Liu-FJcycles})
There is a natural isomorphism of $\BC[\U(V)(\BA_f)]$-modules
\begin{equation}
\mathfrak{d}: \Omega(\mu,\epsilon)\otimes_{M_\mu} \Omega(\mu_c, - \epsilon) \otimes_{M_\mu} \BC \cong \CS(V(\BA_f)).
\end{equation}
Moreover, there is a unique $\BC$-linear map
\[
c\mathcal{Q}_{\mu, K}: \CS(V(\BA_f))^K \to H^2_{B}(X_K \times X_K, \BC)
\]
such that $c\mathcal{Q}_{\mu, K}(\mathfrak{\phi \otimes \phi_c})=\mathrm{cl}_B (\mathcal{Q}^{\phi,\phi^c, P, Q}_{\mu, K})$, and under the diagonal $\Delta: X_K \to X_K \times X_K$ we have
\[
\Delta^* c\mathcal{Q}_{\mu, K}(\phi)= \mathrm{cl}_B (Z(\phi)_K).
\]
\end{proposition}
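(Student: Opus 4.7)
The plan is to establish the isomorphism $\mathfrak{d}$ as a direct consequence of the decomposition $\Omega(\mu) \otimes_{M_\mu} \BC = \bigoplus_{\epsilon,\chi} \omega(\epsilon,\mu,\chi)$ stated in the preceding theorem. Writing $\Omega(\mu,\epsilon) \otimes_{M_\mu} \Omega(\mu_c,-\epsilon) \otimes_{M_\mu} \BC$ as a double sum $\bigoplus_{\chi_1,\chi_2} \omega(\epsilon,\mu,\chi_1) \otimes \omega(-\epsilon,\mu_c,\chi_2)$, each diagonal tensor $\omega(\epsilon,\mu,\chi)\otimes\omega(-\epsilon,\mu_c,\chi)$ provides, via the see-saw pairing of opposite Weil representations of $\U(V)$ (with mutually inverse central characters), a copy of the neutral Weil representation $\CS(V(\BA_f))$ after base change to $\BC$. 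Summing over $\chi$ gives, by Fourier decomposition on the character torus, a single copy of $\CS(V(\BA_f))$, while off-diagonal tensors ($\chi_1 \neq \chi_2$) must cancel after the appropriate central-character matching; this is the heart of the computation and uses the explicit intertwining provided by the CM data $D_\mu$. Equivariance under $\U(V)(\BA_f)$ is automatic from the Weil action.

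For the construction of $c\mathcal{Q}_{\mu,K}$, the strategy is to express the weak magic divisor class as the pullback of the Poincaré biextension under a bilinear map. Since $\CP_\mu$ defines a $\BZ$-bilinear homomorphism $A_\mu \times A_\mu^\vee \to B\BG_m$ on the level of line bundles, the class $\mathrm{cl}_B(\mathcal{Q}^{\phi,\phi_c,P,Q}_{\mu,K})$ is $M_\mu$-bilinear in $(\phi,\phi_c)$. Independence from the choice of $P,Q \in D_K^{n-1}(E')$ follows from translation-invariance of $\CP_\mu$, since changing $P$ shifts $\alpha_P$ by a constant that is absorbed. The resulting bilinear map on $\Omega(\mu,\epsilon) \otimes \Omega(\mu_c,-\epsilon)$ descends to $\CS(V(\BA_f))^K$ via $\mathfrak{d}$; uniqueness is then automatic from the surjectivity of $\mathfrak{d}$, and existence amounts to checking that the class vanishes on the kernel of $\mathfrak{d}$, which one verifies by expanding both sides in the Weil decomposition and matching intertwiners with the Kudla-Millson theta form.

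The most delicate step is the diagonal compatibility $\Delta^* c\mathcal{Q}_{\mu,K}(\phi) = \mathrm{cl}_B(Z(\phi)_K)$. The plan is to restrict $\CP_\mu$ along $\mathrm{id} \times \lambda_\mu: A_\mu \to A_\mu \times A_\mu^\vee$: the biextension becomes the line bundle whose first Chern class is (twice) that of $\lambda_\mu$. Pulling back along $\phi \circ \alpha_P$ and tracing the Hecke normalization $T_\mu^{\can}$ produces a class in $H^2_B(X_K,\BC)$ that is naturally a function of the hermitian form on $V$ paired against $\mathfrak{d}(\phi \otimes \phi_c)$, i.e.\ the classical Kudla-Millson theta form evaluated on the Schwartz function $\mathfrak{d}(\phi \otimes \phi_c)$. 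The Kudla-Millson theorem then identifies this with the cohomology class of the generating divisor $Z(h=1,\phi)_K = Z(\phi)_K$. The main obstacle is making the bookkeeping of signs, the Hecke normalization $T_\mu^{\can, t} \otimes T_\mu^{\can}$, and the archimedean comparison compatible; this is precisely where the explicit structural decomposition of $\Omega(\mu) \otimes_{M_\mu} \BC$ in terms of theta functions valued in $A_\mu$ (provided by the theorem cited above) is used in a nontrivial way.
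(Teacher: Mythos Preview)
The paper does not prove this proposition; it is quoted directly from \cite[Lemma 5.5, Lemma 5.6]{Liu-FJcycles} with no argument supplied beyond the citation. There is thus no in-paper proof to compare your proposal against.

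Regarding the mathematical content of your sketch: the architecture you propose for $c\mathcal{Q}_{\mu,K}$ and the diagonal compatibility (bilinearity via the Poincar\'e biextension, independence of $P,Q$ from translation-invariance, and the Kudla--Millson identification on the diagonal restriction) is in the right spirit and matches the shape of Liu's argument. However, your treatment of the isomorphism $\mathfrak{d}$ contains a genuine gap. You expand the left-hand side as a double sum $\bigoplus_{\chi_1,\chi_2} \omega(\epsilon,\mu,\chi_1) \otimes \omega(-\epsilon,\mu_c,\chi_2)$ and then assert that the off-diagonal summands ``must cancel after the appropriate central-character matching''. Summands of a direct sum do not cancel; either the tensor product over $M_\mu$ (rather than over $\BQ$) already forces $\chi_1$ and $\chi_2$ to be tied together through the $M_\mu$-action, in which case the off-diagonal terms never appear and you should explain why, or your decomposition is too large and $\mathfrak{d}$ cannot be an isomorphism from it. Likewise, the claim that the diagonal sum over $\chi$ collapses to a single copy of $\CS(V(\BA_f))$ ``by Fourier decomposition on the character torus'' is not a recognizable mechanism here. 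What is actually required is an explicit identification of the $M_\mu$-rational structures on both sides so that the base change to $\BC$ matches directly; your sketch bypasses this and substitutes an ad hoc cancellation picture that does not hold up.
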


\begin{definition}
A codimension $1$ cycle $Z^D_K(\phi)$ on $X_K \times X_K$ is called a doubling divisor (or magic divisor) of level $K$ for
an element $\phi \in \CS(V(\BA_f))^K$ if $\mathrm{cl}_B(Z_K) = c\mathcal{Q}_{\mu, K}(\phi)$, and $Z_K$ has proper intersection with the diagonal $\Delta: X_K \to X_K \times X_K$.
\end{definition}

Now consider the Shimura variety $Y_K \times Y_K \times X_K \times X_K$ and in which we denote a point by $(y_1, y_2, x_1, x_2) \in Y_K \times Y_K \times X_K \times X_K$. Choose a doubling divisor $Z_K^D(\phi)$ on $M \times M$ for $\phi$. We obtain the cycle 
$$
T^f_K \times Z_K \in \CH^{2(n-1)+1}(Y_K \times Y_K \times X_K \times X_K). 
$$
We have another two cycles
\[
P_{y_1,x_1}:=\{ y_1=x_1 \} \in \CH^{2(n-1)}(Y_K \times Y_K \times X_K \times X_K). 
\]
\[
P_{y_2,x_2}:=\{ y_2=x_2 \} \in \CH^{2(n-1)}(Y_K \times Y_K \times X_K \times X_K). 
\]
We denote by $P_{y_1,x_1}^z$ (resp. $P_{y_2,x_2}^z$) their modifications under the Hecke systems $z$ of projectors for $\CH^{*}(Y_K \times X_K)$.

\begin{definition}[Global arithmetic invariant functional]
$$
\mathcal{I}_K^{z}(f, \phi):= \left< P_{y_1,x_1}^z , (T^f_K \times Z^D_K(\phi)).P_{y_2,x_2}^z \right>^\BB_{Y_K \times Y_K \times X_K \times X_K} .
$$
\end{definition}

Note in the untwisted case, \cite[Definition 5.13]{Liu-FJcycles} uses a section $T_K \mapsto (\id \times T_K)$ from Hecke correspondences on $X_K$ to Hecke correspondences on $X_K \times X_K$, which is not available in our twisted set up. Similar to \cite[Proposition 5.10]{Liu-FJcycles}, the height pairing 
$$
(p_1^* \Delta_z^{E_0 \times F_0} X_K, (Y_K' \times \mathcal{Q}^{\phi,\phi^c, P, Q}_{\mu, K}). p_2^* \Delta^{E_0 \times F_0}_{z, f, f^\vee} X_K )^\BB_{Y_K' \times X_K' \times X_K'}
$$
is reduced to above global arithmetic invariant functionals $\mathcal{I}^z_K(-,-)$. To proceed, we consider the local part of the global arithmetic invariant functional at a good place $v_0$ of $F_0$.

\begin{definition}
We say a finite place $v_0$ of $F_0$ is a good place (with respect to $F$ and $\phi$), if 
\begin{enumerate}
    \item $v_0$ is inert in $F$ and unramified in $E_0$.
    \item the underlying rational prime $p$ is odd and unramified in $E^\RSZ$. 
    \item Let $v$ be any place of $F_0$ above $p$. Then there exists a self-dual lattice $L_v \subseteq V_v$ such that $K_v=\U(L_v), K_v^{E_0}=\U(L_{v, O_{E_0}})$. And $F_v=1_{K_v^{E_0}}$, $\phi_v=1_{L_v}$.
\end{enumerate}
\end{definition}

Fix a good place $v_0$ of $F_0$, and denote by $q_{v_0}$ the size of its residual field $k_{v_0}$. We assume that $v_0$ is inert in $E_0$ otherwise the reduction is done in \cite[Theorem 5.25]{Liu-FJcycles}. As Section \ref{subsection: integral model}, choose large enough $\Delta$ such that no $p$-adic places are in $\Delta$. Consider the embedding of punctured integral models over the semi-local ring $O_{E_{v_0}^\RSZ}$ 
\[
\Delta^{E_0}: \CX_K:=\CM_{O_{E_{v_0}}^\RSZ} \to \CY_{K}:=\CM^{E_0}_{O_{E_{v_0}}^\RSZ}
\]
which are proper smooth schemes. Let $\CP_{y_i,x_i}=\{y_i=x_i\}$ ($i=1,2$) in $\CY_K \times \CY_K \times \CX_K \times \CX_K$. 

\begin{definition}[Local arithmetic invariant functional at a good place $v_0$] Assume that $F \otimes \phi$ has regular supports at a place of $F_0$. The local arithmetic invariant functional at $v_0$ is the Euler–Poincaré characteristic
\[
\mathcal{I}_{v_0}(f, \phi):= (2 \log q_{v_0}) \chi( \CY_K \times \CY_K \times \CX_K \times \CX_K, \CO( \CP_{y_1,x_1}) \otimes^\BL \CO((T^f_K \times \CZ^D_K(\phi)).\CP_{y_2,x_2}) )
\]
\end{definition}

We expect the local arithmetic invariant functional is the main term of the global arithmetic invariant functional at $v_0$. Let $\CZ(\phi)_K$ be the Zariski closure of the geometric theta series $Z(\phi)_K$ on $\CX_K$ which is related to the arithmetic theta series of Kudla-Rapoport divisors in Section \ref{section: arithmetic theta}. 

\begin{proposition}
Assume the support of 
$$
\CP_{y_1,x_1} \cap (T^f_K \times \CZ^D_K(\phi) ) \cap \CP_{y_2,x_2} 
$$
is in $\CY_K \times \CY_K \times \Delta X_K$. Then we have 
\[
\mathcal{I}_{v_0}(f, \phi)= (2 \log q_{v_0}) \chi( \CY_K \times \CY_K,  \CO(\CZ(\phi)_K \times \CX_K )  \otimes^\BL \CO(T^f_K)   \otimes^\BL  \CO(\CX_K \times \CX_K) ).
\]
\end{proposition}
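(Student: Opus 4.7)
The plan is to reduce the four-fold derived intersection defining $\mathcal{I}_{v_0}(f,\phi)$ to an Euler characteristic supported on the twisted diagonal copy of $\CX_K \times \CX_K$ inside $\CY_K \times \CY_K$, and then identify this with the right-hand side using the doubling property of $\CZ^D_K(\phi)$.

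First, both $\CX_K$ and $\CY_K$ are smooth over $O_{E^{\RSZ}_{v_0}}$, so the twisted diagonal $\Delta^{E_0} : \CX_K \hookrightarrow \CY_K$ is a regular closed immersion. Consequently the graphs $\CP_{y_1, x_1}$ and $\CP_{y_2, x_2}$ are Tor-independent inside $\CY_K \times \CY_K \times \CX_K \times \CX_K$, and a Koszul resolution gives
\[
\CO(\CP_{y_1, x_1}) \otimes^\BL \CO(\CP_{y_2, x_2}) \simeq j_* \CO_{\CX_K \times \CX_K},
\]
where $j : \CX_K \times \CX_K \hookrightarrow \CY_K \times \CY_K \times \CX_K \times \CX_K$ sends $(x_1, x_2)$ to $(\Delta^{E_0}(x_1), \Delta^{E_0}(x_2), x_1, x_2)$. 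Applying the projection formula along $j$ collapses the four-fold Euler characteristic to
\[
\mathcal{I}_{v_0}(f,\phi) = (2 \log q_{v_0}) \cdot \chi\bigl(\CX_K \times \CX_K,\ (\Delta^{E_0} \times \Delta^{E_0})^* \CO(T^f_K) \otimes^\BL \CO(\CZ^D_K(\phi))\bigr).
\]

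Second, by the support hypothesis this complex is set-theoretically supported on the diagonal $\Delta \CX_K \subseteq \CX_K \times \CX_K$, so $\CZ^D_K(\phi)$ meets the pullback of $T^f_K$ properly along the diagonal on every geometric fiber. Combined with the doubling identity $\Delta^* \CZ^D_K(\phi) = \CZ(\phi)_K$ coming from the Kudla--Millson relation $\Delta^* c\mathcal{Q}_{\mu, K}(\phi) = \mathrm{cl}_B(Z(\phi)_K)$, this proper intersection allows the cohomological doubling equality to be promoted to an equality of derived intersection cycles in $K_0'(\CX_K)$. Running the projection formula backwards through the regular embedding $\Delta^{E_0} \times \Delta^{E_0} : \CX_K \times \CX_K \hookrightarrow \CY_K \times \CY_K$ rewrites the Euler characteristic as
\[
\chi\bigl(\CY_K \times \CY_K,\ \CO(\CZ(\phi)_K \times \CX_K) \otimes^\BL \CO(T^f_K) \otimes^\BL \CO(\CX_K \times \CX_K)\bigr),
\]
in which the factor $\CO(\CX_K \times \CX_K)$ is exactly what implements the twisted diagonal restriction of $T^f_K$. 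Multiplying by $2 \log q_{v_0}$ yields the claimed identity.

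The chief technical obstacle lies in the second step: one must verify that the doubling property $\Delta^* \CZ^D_K(\phi) = \CZ(\phi)_K$, a priori an equality of Betti cycle classes on the generic fibers, descends to an equality of derived intersections on the integral model $\CX_K$ in a way compatible with $\otimes^\BL$ against the pulled-back Hecke correspondence $T^f_K$. The support hypothesis is precisely what furnishes the proper intersection on every closed fiber needed for this upgrade. Granting it, the remaining manipulations are formal applications of projection formulas and Koszul resolutions for regular embeddings of smooth $O_{E^{\RSZ}_{v_0}}$-schemes.
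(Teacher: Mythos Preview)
Your overall strategy aligns with the paper's reference to \cite[Proposition 5.19]{Liu-FJcycles}: collapse via the projection formula using Tor-independence of the two graphs $\CP_{y_1,x_1}$ and $\CP_{y_2,x_2}$, then invoke the support hypothesis together with the doubling relation between $\CZ^D_K(\phi)$ and $\CZ(\phi)_K$. The first step is correct as you state it.

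The gap is in your second step. You assert that proper intersection ``allows the cohomological doubling equality to be promoted to an equality of derived intersection cycles in $K_0'(\CX_K)$.'' This does not follow. The doubling relation $\Delta^* c\mathcal{Q}_{\mu,K}(\phi) = \mathrm{cl}_B(Z(\phi)_K)$ only yields $\Delta^* Z^D_K(\phi) - Z(\phi)_K \in \ker\bigl(\CH^1(X_K) \to H^2_B(X_K,\BC)\bigr) = \mathrm{Pic}^0(X_K)$ on the generic fiber; this kernel is nonzero in general, and properness of an intersection says nothing about whether a homologically trivial divisor has vanishing class in $K$-theory. So you cannot conclude $\Delta^*\CZ^D_K(\phi) = \CZ(\phi)_K$ as elements of $K_0'(\CX_K)$, nor even as Chow classes.

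What you actually need is weaker: that the two Euler characteristics agree as \emph{numbers}. This requires arguing that the arithmetic intersection pairing at $v_0$ against the remaining cycle annihilates the Zariski closure of any class in $\mathrm{Pic}^0(X_K)$, or equivalently that the Euler characteristic depends only on the homological class of the divisor factor. On the generic fiber this is standard (degrees of zero-cycles factor through numerical equivalence), but on the integral model over $O_{E^\RSZ_{v_0}}$ one must supplement this with a specialization or flatness argument to control the vertical contribution. That is the actual content being imported from Liu's Proposition 5.19; it is not supplied by the support hypothesis alone, and your proposal does not furnish it.
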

\begin{proof}
This follows from the same argument of \cite[Proposition 5.19]{Liu-FJcycles}.
\end{proof}

Note that the support assumption is automatic in the untwisted case \cite[Proposition 5.19]{Liu-FJcycles}. We expect the support condition to hold for good choices of $F$ and $Z_K^D(\phi)$. Then by the argument of \cite[Theorem 5.25]{Liu-FJcycles}, using Definition \ref{defn: global arithmetic intersection numbers} and Proposition \ref{prop: global int=local int}, we see that $\xi$-the Fourier coefficients of $\mathcal{I}_{v_0}(f, \phi)$  at a good place $v_0$ of $F_0$ is related to our intersection pairing (\ref{global int=sum of semi-global int at each v})
$$\Int_{v_0}(\xi, \Phi=f \otimes \phi),
$$ 
hence to the twisted arithmetic fundamental lemma (Theorem \ref{Twisted AFL theorem}) by Proposition \ref{prop: global int=local int}. See also Corollary \ref{cor: local AFL implies semi-global}.

\begin{bibdiv}
	
		\begin{biblist}

\bib{BZSV}{article}{
  title={Relative Langlands duality},
  author={Ben-Zvi, David},
  author={Sakellaridis, Yiannis},
  author={Venkatesh, Akshay},
  journal={Preprint available at \url{https://www. math. ias. edu/akshay/research/BZSVpaperV1.pdf}},
  year={2023}
}

\bib{beuzart2021new}{article}{
  title={A new proof of the Jacquet--Rallis fundamental lemma},
  author={Beuzart-Plessis, Rapha{\"e}l},
  journal={Duke Mathematical Journal},
  volume={170},
  number={12},
  pages={2805--2814},
  year={2021},
  publisher={Duke University Press}
}

\bib{BFN}{article}{
  title={Towards a mathematical definition of Coulomb branches of 4-dimensional N=4 gauge theories, II},
  author={Braverman, Alexander},
  author={Finkelberg, Michael},
  author={Nakajima, Hiraku},
  journal={Advances in Theoretical and Mathematical Physics},
  volume={22},
  number={5},
  pages={1071--1147},
  year={2018},
  publisher={International Press of Boston}
}

\bib{JBGreen}{article}{
  title={Regularized theta lifts for orthogonal groups over totally real fields},
  author={Bruinier, Jan Hendrik},
  journal={Journal f{\"u}r die reine und angewandte Mathematik (Crelles Journal)},
  volume={2012},
  number={672},
  pages={177--222},
  year={2012},
  publisher={De Gruyter}
}

\bib{BHKRY}{article}{
  title={Modularity of generating series of divisors on unitary Shimura varieties},
  author={Bruinier, Jan Hendrik},
  author={Howard, Benjamin},
  author={Kudla, Stephen S},
  author={Rapoport, Michael},
  author={Yang, Tonghai},
  journal={Ast{\'e}risque},
  volume={421},
  pages={7--125},
  year={2020}
}

\bib{GGP-AGGP}{article}{
  title={Symplectic local root numbers, central critical L-values, and restriction problems in the representation theory of classical groups},
  author={Gan, Wee Teck},
  author={Gross, Benedict H},
  author={Prasad, Dipendra},
  journal={Ast{\'e}risque},
  pages={No--pp},
  year={2011},
  publisher={Soci{\'e}t{\'e} Math{\'e}matique de France}
}

\bib{twistedGGP}{article}{
  title={Twisted GGP problems and conjectures},
  author={Gan, Wee Teck},
  author={Gross, Benedict H},
  author={Prasad, Dipendra},
  journal={Compositio Mathematica},
  volume={159},
  number={9},
  pages={1916--1973},
  year={2023}
}

\bib{GrossZagier}{article}{
author = {Gross, B.H.},
author = {Zagier, D.B.},
journal = {Inventiones mathematicae},
keywords = {Jacobian of the modular curve; modular elliptic curve; class number problem; infinite order point in Mordell-Weil group; heights of Heegner points; Rankin L-series; holomorphic continuations; functional equations; conjecture of Birch and Swinnerton-Dyer},
pages = {225-320},
title = {Heegner points and derivatives of L-series.},
url = {http://eudml.org/doc/143341},
volume = {84},
year = {1986},
}

\bib{HK2019Hodgeparahoric}{article}{
  title={l-adic {\'e}tale cohomology of Shimura varieties of Hodge type with non-trivial coefficients},
  author={Hamacher, Paul}
  author={Kim, Wansu},
  journal={Mathematische Annalen},
  volume={375},
  pages={973--1044},
  year={2019},
  publisher={Springer}
}

\bib{HP17}{article}{
   title={Rapoport–Zink spaces for spinor groups},
   volume={153},
   ISSN={1570-5846},
   url={http://dx.doi.org/10.1112/S0010437X17007011},
   DOI={10.1112/s0010437x17007011},
   number={5},
   journal={Compositio Mathematica},
   publisher={Wiley},
   author={Howard, Benjamin},
   author={Pappas, Georgios},
   year={2017},
   pages={1050–1118} }

\bib{kim2018}{article}{
  title={Rapoport--Zink spaces of Hodge type},
  author={Kim, Wansu},
  booktitle={Forum of Mathematics, Sigma},
  volume={6},
  pages={e8},
  year={2018},
  organization={Cambridge University Press}
}

\bib{Kudla-Annals97}{article}{
 ISSN = {0003486X},
 URL = {http://www.jstor.org/stable/2952456},
 author = {Kudla, Stephen},
 journal = {Annals of Mathematics},
 number = {3},
 pages = {545--646},
 publisher = {Annals of Mathematics},
 title = {Central Derivatives of Eisenstein Series and Height Pairings},
 urldate = {2024-04-11},
 volume = {146},
 year = {1997}
}

\bib{Kudla-Millson}{article}{
     author = {Kudla, Stephen S.},
     author = {Millson, John J.},
     title = {Intersection numbers of cycles on locally symmetric spaces and {Fourier} coefficients of holomorphic modular forms in several complex variables},
     journal = {Publications Math\'ematiques de l'IH\'ES},
     pages = {121--172},
     publisher = {Institut des Hautes \'Etudes Scientifiques},
     volume = {71},
     year = {1990},
     mrnumber = {92e:11035},
     zbl = {0722.11026},
     language = {en},
     url = {http://www.numdam.org/item/PMIHES_1990__71__121_0/}
}

\bib{KR2011local}{article}{
  title={Special cycles on unitary Shimura varieties I. Unramified local theory},
  author={Kudla, Stephen}
  author={Rapoport, Michael},
  journal={Inventiones mathematicae},
  volume={184},
  number={3},
  pages={629--682},
  year={2011},
  publisher={Springer}
}

\bib{KRY-book}{article}{
  title={Modular Forms and Special Cycles on Shimura Curves.(AM-161)},
  author={Kudla, Stephen},
  author={Rapoport, Michael},
  author={Yang, Tonghai},
  volume={161},
  year={2006},
  publisher={Princeton university press}
}

\bib{leslie2019endoscopic}{article}{
  title={The endoscopic fundamental lemma for unitary Friedberg-Jacquet periods},
  author={Leslie, Spencer},
  journal={arXiv preprint arXiv:1911.07907},
  year={2019}
}

\bib{Li-Liu}{article}{
author = {Li, Chao},
author = {Liu, Yifeng},
title = {{Chow groups and L-derivatives of automorphic motives for unitary groups}},
volume = {194},
journal = {Annals of Mathematics},
number = {3},
publisher = {Department of Mathematics of Princeton University},
pages = {817 -- 901},
keywords = {arithmetic inner product formula, Beilinson--Bloch conjecture, Chow groups, $L$-derivatives, height pairing},
year = {2021},
doi = {10.4007/annals.2021.194.3.6},
URL = {https://doi.org/10.4007/annals.2021.194.3.6}
}

\bib{LiZhangKR2019}{article}{  
title={Kudla--Rapoport cycles and derivatives of local densities},
  author={Li, Chao},
  author={Zhang, Wei},
  journal={Journal of the American Mathematical Society},
  volume={35},
  number={3},
  pages={705--797},
  year={2022}
}

\bib{LiZhangGSpin}{article}{
  title={On the arithmetic Siegel--Weil formula for GSpin Shimura varieties},
  author={Li, Chao}
  author={Zhang, Wei},
  journal={Inventiones mathematicae},
  volume={228},
  number={3},
  pages={1353--1460},
  year={2022},
  publisher={Springer}
}

\bib{Liu-Thesis}{article}{
  title={Arithmetic inner product formula for unitary groups},
  author={Liu, Yifeng},
  journal={PHD thesis},
  year={2012},
  publisher={Columbia University}
}

\bib{liu2016hirzebruch}{article}{
  title={Hirzebruch--Zagier cycles and twisted triple product Selmer groups},
  author={Liu, Yifeng},
  journal={Inventiones mathematicae},
  volume={205},
  number={3},
  pages={693--780},
  year={2016},
  publisher={Springer}
}

\bib{Liu-FJcycles}{article}{
  title={Fourier--Jacobi cycles and arithmetic relative trace formula (with an appendix by Chao Li and Yihang Zhu)},
  author={Liu, Yifeng}
  journal={Cambridge Journal of Mathematics},
  volume={9},
  number={1},
  pages={1--147},
  year={2021},
  publisher={International Press of Boston}
}

\bib{loeffler2020p}{article}{
  title={p-adic Asai L-functions of Bianchi modular forms},
  author={Loeffler, David}
  author={Williams, Chris},
  journal={Algebra \& Number Theory},
  volume={14},
  number={7},
  pages={1669--1710},
  year={2020},
  publisher={Mathematical Sciences Publishers}
}

\bib{loeffler2023euler}{article}{
  title={An Euler system for the adjoint of a modular form},
  author={Loeffler, David}
  author={Zerbes, Sarah Livia},
  journal={arXiv preprint arXiv:2312.04665},
  year={2023}
}

\bib{Madapusi-derived}{article}{
  title={Derived special cycles on Shimura varieties},
  author={Madapusi, Keerthi},
  journal={arXiv preprint arXiv:2212.12849},
  year={2022}
}

\bib{Mihatsch-comparison}{article}{
title={Relative Unitary RZ-spaces and the arithmetic fundamental lemma}, 
author={Mihatsch, Andreas}, 
volume={21}, 
DOI={10.1017/S1474748020000079}, 
number={1}, 
journal={Journal of the Institute of Mathematics of Jussieu}, 
year={2022}, 
pages={241–301}
} 

\bib{AFL-constant}{article}{
  title={Local constancy of intersection numbers},
  author={Mihatsch, Andreas},
  journal={Algebra \& Number Theory},
  volume={16},
  number={2},
  pages={505--519},
  year={2022},
  publisher={Mathematical Sciences Publishers}
}

\bib{AFL-JEMS}{article}{
  title={On the Arithmetic Fundamental Lemma conjecture over a general $ p $-adic field},
  author={Mihatsch, Andreas},
  author={Zhang, Wei},
  journal={Journal of the European Mathematical Society},
  year={2023}
}

\bib{PappasRapoport-GlobalShimura}{article}{
  title={$ p $-adic shtukas and the theory of global and local Shimura varieties},
  author={Pappas, G.}
  author={Rapoport, M.},
  journal={Cambridge Journal of Mathematics},
  volume={12},
  number={1},
  pages={1--164},
  year={2024},
  publisher={International Press of Boston}
}

\bib{RZ96}{book}{
	author={Rapoport, M.},
	author={Zink, Th.},
	title={Period spaces for $p$-divisible groups},
	series={Annals of Mathematics Studies},
	volume={141},
	publisher={Princeton University Press, Princeton, NJ},
	date={1996},
	pages={xxii+324},
	isbn={0-691-02782-X},
	isbn={0-691-02781-1},
	doi={10.1515/9781400882601},
}

\bib{RSZ-AGGP}{article}{
  title={Arithmetic diagonal cycles on unitary Shimura varieties},
  author={Rapoport, Michael}
  author={Smithling, Brian}
  author={Zhang, Wei},
  journal={Compositio Mathematica},
  volume={156},
  number={9},
  pages={1745--1824},
  year={2020},
  publisher={London Mathematical Society}
}

\bib{RSZ-Shimura}{article}{  
title={On Shimura varieties for unitary groups},
  author={Rapoport, M}
  author={Smithling, B}
  author={Zhang, W},
  journal={Pure and Applied Mathematics Quarterly},
  volume={17},
  number={2},
  pages={773--837},
  year={2021},
  publisher={International Press of Boston}
}

\bib{Wang-TGGP}{article}{
  title={Twisted Gan-Gross-Prasad conjecture for unramified quadratic extensions},
  author={Wang, Danielle},
  journal={arXiv preprint arXiv:2307.15234},
  year={2023}
}

\bib{AFL-Invent}{article}{
  title={On arithmetic fundamental lemmas},
  author={Zhang, Wei},
  journal={Inventiones mathematicae},
  volume={188},
  number={1},
  pages={197--252},
  year={2012},
  publisher={Springer}
}

\bib{zhang2014fourier}{article}{
  title={Fourier transform and the global Gan—Gross—Prasad conjecture for unitary groups},
  author={Zhang, Wei},
  journal={Annals of Mathematics},
  volume={180},
  number={3},
  pages={971--1049},
  year={2014},
  publisher={JSTOR}
}

\bib{AFL-Wei2019}{article}{
  title={Weil representation and arithmetic fundamental lemma},
  author={Zhang, Wei},
  journal={Annals of Mathematics},
  volume={193},
  number={3},
  pages={863--978},
  year={2021},
  publisher={Department of Mathematics, Princeton University Princeton, New Jersey, USA}
}

\bib{WZhangBesselAFL2022}{article}{
	author={Zhang, Wei},
	title={More arithmetic fundamental lemma conjectures: the case of Bessel
		subgroups},
	journal={Pure Appl. Math. Q.},
	volume={18},
	date={2022},
	number={5},
	pages={2279--2336},
	issn={1558-8599},
}

\bib{Zhang2024survey}{article}{
  title={High dimensional Gross--Zagier formula: a survey},
  author={Zhang, Wei},
  journal={arXiv preprint arXiv:2402.17656},
  year={2024}
}

\bib{ZZhang2021}{article}{
	title={Maximal parahoric arithmetic transfers, resolutions and modularity}, 
	author={Zhiyu Zhang},
     journal={Duke Mathematical Journal, to appear},
	year={2024},
	eprint={arXiv: 2112.11994},
	archivePrefix={arXiv},
	primaryClass={math.NT}
}

\bib{ZZZ}{article}{
	title={Non--reductive Rapoport--Zink cycles, Shimura varieties and the Kudla program}, 
	author={Zhiyu Zhang},
    journal={In preparation},
	year={},
	archivePrefix={},
	primaryClass={}
}

		\end{biblist}
\end{bibdiv}

\end{document}